\documentclass[11pt]{amsart}
\usepackage[usenames]{color}
\usepackage{fullpage}
\usepackage{amscd}
\usepackage{amssymb, latexsym}
\usepackage[all,2cell,ps]{xy}
\usepackage{mathdots}

\theoremstyle{plain}
\newtheorem{thm}{Theorem}[section]
\newtheorem{theorem}[thm]{Theorem}

\newtheorem{corollary}[thm]{Corollary}

\newtheorem{lemma}[thm]{Lemma}

\newtheorem{proposition}[thm]{Proposition}

\newtheorem{fact}[thm]{Fact}

\theoremstyle{definition}
\newtheorem{de}[thm]{Definition}
\newtheorem{exm}[thm]{Example}

\newtheorem{example}[thm]{Example}

\newcommand{\cz}[1]{{\color{red}{#1}\color{black}{}}}

\newcommand{\id}{\mathrm{id}}

\newcommand{\Mlt}{\mathop{\mathcal{G}}}

\numberwithin{equation}{section}

\begin{document}

\title{Diagonals of solutions of the Yang-Baxter equation}

\author{P\v remysl Jedli\v cka}
\author{Agata Pilitowska}

\address{(P.J.) Department of Mathematics and Physics, Faculty of Engineering, Czech University of Life Sciences, Kam\'yck\'a 129, 16521 Praha 6, Czech Republic}
\address{(A.P.) Faculty of Mathematics and Information Science, Warsaw University of Technology, Koszykowa 75, 00-662 Warsaw, Poland}

\email{(P.J.) jedlickap@tf.czu.cz}
\email{(A.P.) agata.pilitowska@pw.edu.pl}

\keywords{Yang-Baxter equation, diagonal mappings, multipermutation solutions, reductive solutions}

\subjclass[2020]{Primary: 16T25. 
Secondary: 08A05, 20B30.
}

\date{\today}

\begin{abstract}
We study the diagonal mappings in non-involutive set-theoretic solutions of the Yang-Baxter equation. We show that, for non-degenerate solutions, they are commuting bijections. This gives the positive answer to the question: ``Is every non-degenerate solution bijective?'' of Ced\'{o}, Jespers and Verwimp. 
Additionally, we show that for a subclass of solutions called $k$-\emph{permutational}, only one-sided non-degeneracy suffices to prove that one of the diagonal mappings is invertible. We also present an equational characterization of multipermutation solutions and extend results of Rump and Gateva-Ivanova about decomposability to non-involutive case. In particular, we show that each, not necessarily involutive, square-free multipermutation solution of finite level and arbitrary cardinality, is always decomposable. 
\end{abstract}

\maketitle

\section{Introduction}

The Yang-Baxter equation is a fundamental equation occurring in mathematical physics. It appears, for example, in integrable models in statistical mechanics, quantum field theory or Hopf algebras~(see e.g. \cite{Jimbo, K}). Searching for its solutions has been absorbing researchers for many years.

Let us recall that, for a vector space $V$, a {\em solution of the Yang--Baxter equation} is a linear mapping $r:V\otimes V\to V\otimes V$ 
 such that
\begin{align*}
(id\otimes r) (r\otimes id) (id\otimes r)=(r\otimes id) (id\otimes r) (r\otimes id).
\end{align*}

Description of all possible solutions seems to be extremely difficult and therefore
there were some simplifications introduced by Drinfeld in \cite{Dr90}.
Let  $X$ be a basis of the space $V$ and let $\sigma:X^2\to X$ and $\tau: X^2\to X$ be two mappings. We say that $(X,\sigma,\tau)$ is a {\em set-theoretic solution of the Yang--Baxter equation} if
the mapping 
$$x\otimes y \mapsto \sigma(x,y)\otimes \tau(x,y)$$ extends to a solution of the Yang--Baxter
equation. It means that $r\colon X^2\to X^2$, where $r=(\sigma,\tau)$,  
satisfies the \emph{braid relation}:
\begin{equation}\label{eq:braid}
(id\times r)(r\times id)(id\times r)=(r\times id)(id\times r)(r\times id).
\end{equation}

A solution is called {\em left non-degenerate} if the mappings $\sigma_x=\sigma(x,\_)$  are bijections
	and {\em right non-degenerate} if the mappings $\tau_y=\tau(\_\,,y)$
		are bijections,
for all $x,y\in X$. A solution is {\em non-degenerate} if it is left and right non-degenerate.
A~solution is called {\em bijective} if $r$ is a~bijection of~$X^2$. It is {\em involutive} if $r^{-1}=r$, i.e., for each $x,y\in X$, $\tau_y(x)=\sigma_{\sigma_x(y)}^{-1}(x)$ and $\sigma _x(y)=\tau^{-1}_{\tau_y(x)}(y)$. Moreover, it is
\emph{square-free} if $r(x,x)=(x,x)$, for every $x\in X$.

All solutions we study in this paper are set-theoretic but not necessarily non-degenerate and we will call them simply \emph{solutions}. 
The set $X$ can be of arbitrary cardinality.

If $(X, \sigma,\tau)$ is a solution then directly, by the braid relation, we obtain, for $x,y,z\in X$:
\begin{align}
&\sigma_x\sigma_y=\sigma_{\sigma_x(y)}\sigma_{\tau_y(x)} \label{birack:1}\\
&\tau_{\sigma_{\tau_y(x)}(z)}\sigma_x(y)=\sigma_{\tau_{\sigma_y(z)}(x)}\tau_{z}(y) \label{birack:2}\\
&\tau_x\tau_y=\tau_{\tau_x(y)}\tau_{\sigma_y(x)} \label{birack:3}
\end{align}

\vskip 4mm

In the last decade, the main interest of researchers lied in the investigation of involutive solutions.
Non-involutive solutions were firstly studied by Soloviev \cite{Sol} and Lu, Yan and Zhu \cite{LYZ} and later by Guarnieri, Vendramin \cite{GV}, Colazzo, Jespers, Van Antwerpen, Verwimp \cite{CJAV} and Rump \cite{Rump19} and many others, mostly in connection with so called {\em skew braces}, YB-\emph{semitrusses} and $q$-\emph{cycle sets}. A special emphasis was taken on two classes of solutions: {\em indecomposable} and {\em multipermutation} ones. In this context researchers mainly concentrated on involutive solutions. Following Vendramin~\cite{V24}: {\em ``It is worth
noting that our knowledge about these families in the non-involutive case is
currently limited, and there is still much to be explored and understood.''}  we try to expand our knowledge about non-involutive solutions. 

Etingof,  Schedler and Soloviev observed in \cite[Proposition 2,2]{ESS} (see also \cite{JPZ19}) that, for a non-degenerate involutive solution $(X,\sigma,\tau)$, the mappings 
\begin{align*}
&U\colon X\to X; \quad U(x)= \sigma^{-1}_x(x),
\quad {\rm and} \\
&T\colon X\to X; \quad T(x)= \tau^{-1}_x(x)
\end{align*} are invertible, $U=T^{-1}$   and
\begin{align}\label{eq:TU}
&r(T(x),x)=(T(x),x)\quad {\rm and}\quad  r(x,U(x))=(x,U(x)).
\end{align}
Such correspondence is true not only for involutive solutions. By results of Smoktunowicz and Vendramin \cite[Corollary 3.3]{SV18} for solutions which originate from so called \emph{skew braces} also $U=T^{-1}$. 
Additionally, by observation of Lebed and Vendramin \cite[Lemma 1.4]{LV}, the relationship \eqref{eq:TU} 
is satisfied in any finite invertible, non-degenerate, \emph{injective} solution too. But, by Examples \ref{ex:Lub} and \ref{ex:der}, it does not hold in general. On the other hand, Stanovsk\'y proved \cite{S06} that the identities \eqref{eq:TU} are equivalent in any non-degenerate solution.

Moreover, it is evident that a non-degenerate solution $(X,\sigma,\tau)$ is square-free if and only if $T=U=\id$. 
The permutation $T$ helps to decompose the set $X$ into a disjoint union of subsolutions and plays an important role as a criterion of decomposability of a solution.

The investigation of indecomposable solutions was initiated  by Etingof et. al. in \cite{ESS}. In 1996 at the International Algebra Conference in Miskolc Gateva-Ivanova stated a conjecture that each finite involutive square-free solution is decomposable. In 2005 Rump \cite{Rump05} proved the conjecture and additionally showed that it does not hold for infinite solutions.  Ramirez and Vendramin  \cite{RV} noticed that the diagonal mappings may carry deep information about the structure of solutions and Rump's Theorem may be reformulated in the following way: \emph{Each finite non-degenerate involutive solution, for which the diagonal map fixes all points of $X$, is decomposable}. They modified the assumption of Rump's Theorem and considered solutions with special structure of the bijection $T$. In particular, they showed that non-degenerate involutive solutions of size $n>1$, for which $T$ is a cycle of length $n$, are indecomposable. But if the diagonal is a cycle of length $n-1$ then such solutions are decomposable.

Later on, Camp-Mora and Sastriques obtained in \cite{CS} that a finite non-degenerate involutive solution $(X,\sigma,\tau)$, such that the order of the permutation $T$ and the cardinality $|X|>1$ are coprime, is decomposable. Just recently, Lebed, Ramirez and Vendramin \cite{LRV} obtained new results devoted to decomposability of non-degenerate involutive solutions regarding different conditions on the diagonal mappings. The importance of the diagonal mappings is also visible in \cite{AMV}, where such bijections help to enumerate non-degenertate involutive solutions of small size. 

In \cite[Question 4.3]{CJV} Ced\'{o}, Jespers and Verwimp stated the question whether each non-degenerate solution is always bijective and they showed \cite[Theorem 4.5]{CJV} that it is true if the solution is irretractable. Later on, Colazzo, Jespers, Van Antwerpen and Verwimp extended the result a little~\cite[Proposition 3.6]{CJAV}. Moreover, they showed \cite[Theorem 3.1]{CJAV} (see also \cite[Corollary 6]{CCS}) that, for a finite left non-degenerate solution, the solution is bijective if and only if it is right non-degenerate. It turned out that the bijectivity of the diagonal mappings plays crucial role. In particular,  Ced\'{o}, Jespers and Verwimp proved \cite[Lemma 4.4]{CJV} that, for a non-degenerate irrectractable solution, the diagonal $U$ is a bijection and the inverse is equal to $T$. On the other hand, Rump's results \cite[Corollary 2 of Proposition 8]{Rump19} imply that, for left non-degenerate bijective solutions, the diagonal $U$ is invertible if and only if the solution is right non-degenerate. Further, Colazzo, Jespers, Van Antwerpen and Verwimp showed \cite[Lemma 3.2, Lemma 3.3]{CJAV} that, for non-degenerate solutions, the diagonal operations $U$ and $T$ are always injective and they are bijections if and only if the solution is bijective. 

In our paper we give a straightforward proof that the diagonal mappings are bijections in any non-degenerate solution. This immediately gives that any non-degenerate solution is bijective and affirmatively answers Question 4.3 \cite{CJV} of Ced\'{o}, Jespers and Verwimp.   
Moreover, we prove that, although the diagonals $U$ and $T$   are not mutually inverse in general, they commute.
Additionally, we show that, for a subclass of solutions called \emph{permutational}, one-sided non-degeneracy is sufficent for the diagonal to be invertible.

Another topic that we study are multipermutation solutions. They receive an attention
since the property resembles nilpotency and also has strong impact on decomposability of a solution. 
Gateva-Ivanova studied such
solutions in many papers. In \cite[Theorem 4.14]{GIC12} together with Cameron they showed that square-free involutive solutions (of arbitrary cardinality) with finite multipermutation level are  decomposable. Additionally,
Ced\'{o} and Okni\'{n}ski proved in \cite{CO21} that indecomposable involutive solutions of square-free cardinality are always multipermutation solutions. 

In this paper we 
give an equational characterization of multipermutation solutions
and we apply this characterization to
extend results of Rump and Gateva-Ivanova about decomposability to non-involutive solutions. Recently, similar results were presented by Castelli and Trappeniers~\cite{CT}, using a different technique.

The paper is organized as follows: 
in Section~2 we give the definition of a multipermutation solution and compare different approaches.
In Section~\ref{sec:diag} we prove that the diagonal mappings $U$ and $T$ are commutative bijections in each non-degenerate solution  
and we present the inverses to the diagonal mappings [Theorem \ref{lm:TSbijections}]. Some technical results were obtained using the automated deduction software Prover9~\cite{Prover}. In~Section~\ref{sec:multi}
we introduce equational characterization of multipermutation solutions [Proposition \ref{prop:per}]. For multipermutation solutions we give a simpler form of inverses to the diagonal mappings. In particular, we show that one-sided non-degeneracy is sufficent for the diagonal to be invertible [Proposition \ref{lm:bijections}]. Finally, in Section \ref{sec:kred} we define $k$-reductive solutions. We extend Rump's and Gateva-Ivanova's Theorems to non-involutive non-degenerate solutions and  we show that each non-degenerate non-involutive square-free solution of multipermutation level $k$ and arbitrary cardinality is  decomposable [Corollary \ref{cor:sqf-dec}].
At the end we discuss the modified  definition of the multipermutation level of a solution introduced by Castelli and Trappeniers  in \cite{CT} for $k$-reductive solutions.

\section{Retracts}

In this section we discuss different concepts of rectracts that appeared in the literature. All the proofs in this section are probably known to some extent however we felt that a formal proof should
appear somewhere and this is why we decided to write them down.

Let $X$ be a set. An equivalence relation $\mathord{\asymp}\subseteq X\times X$ is \emph{compatible} with a binary operation $f\in X^X$ if for $x_1,x_2,y_1,y_2\in X$,
\begin{equation}
	f(x_1,y_1)\asymp f(x_2,y_2),
\end{equation}
whenever $x_1\asymp x_2$ and $y_1\asymp y_2$.
This condition ensures that there exists a well defined operation
$f/_{\asymp}$ on the equivalence classes of~$\asymp$.
If an equivalence is compatible with all the operations of a structure then it is called a {\em congruence}.

In \cite{ESS} Etingof, Schedler and Soloviev introduced, for each involutive solution $(X,\sigma,\tau)$, the equivalence relation $\sim$ on the set $X$: for each $x,y\in X$
\begin{align}\label{rel:sim}
	x\sim y\quad \Leftrightarrow\quad \sigma_x=\sigma_y
\end{align}
and they showed that 
$\sim$ is a compatible with the operations $\sigma$, $\sigma^{-1}$, $\tau$ and $\tau^{-1}$. 
In the case of non-involutive solutions,
the relation~$\sim$ is defined as 
\begin{equation}\label{eq:retract}
	x\sim y \quad\Leftrightarrow \quad {\sigma_x=\sigma_y} \wedge {\tau_x=\tau_y}.
\end{equation}

Lebed and Vendramin showed in \cite{LV} that  the relation $\sim$ is compatible with the operations $\sigma$, $\sigma^{-1}$, $\tau$ and $\tau^{-1}$ in any finite bijective non-degenerate injective solution. 
In \cite{JPZ19} the authors together with Zamojska-Dzienio proved that the relation $\sim$ induces a non-degenerate solution on the quotient set $X^{\sim}$  for any non-degenerate solution $(X,\sigma,\tau)$.  A substantially shorter proof has recently appeared in \cite{CJKAV} for bijective non-degenerate solutions.

\begin{de}\label{ret}
	Let $(X,\sigma,\tau)$ be a non-degenerate solution. The quotient solution $\mathrm{Ret}(X,\sigma,\tau):=(X^{\sim},\sigma,\tau)$  with $\sigma_{x^{\sim}}(y^{\sim})=\sigma_x(y)^{\sim}$ and $\tau_{y^{\sim}}(x^{\sim})=\tau_y(x)^{\sim}$, for $x^{\sim},y^{\sim}\in X^{\sim}$  and $x\in x^{\sim},\; y\in y^{\sim}$, is called the \emph{retraction} solution of $(X,\sigma,\tau)$. One defines \emph{iterated retraction} in the following way: ${\rm Ret}^0(X,\sigma,\tau):=(X,\sigma,\tau)$ and
	${\rm Ret}^k(X,\sigma,\tau):={\rm Ret}({\rm Ret}^{k-1}(X,\sigma,\tau))$, for any natural number $k>1$. 
	We say that a solution $(X,\sigma,\tau)$ is \emph{irretractable} if ${\rm Ret}(X,\sigma,\tau)=(X,\sigma,\tau)$, i.e.
	$\sim$ is the trivial relation.
	On the other hand, for the least 
		integer~$k\geq 0$ such that $\mathrm{Ret}^k(X,\sigma,\tau)$ has one element only  we say that
	$(X,\sigma,\tau)$ has {\em multipermutation level $k$}.
\end{de}

In the case of not non-degenerate solution, an equivalence so defined may be incompatible with some of the operations $\sigma$, $\sigma^{-1}$, $\tau$ and $\tau^{-1}$,  
as we can see on the following example.

\begin{exm}\label{exm:1}
	Let $X=\{a,b,c\}$ and let
	\[ \sigma_x(y)=y, \text{for all }x,y\in X, \qquad 
	\tau_x(y)=\begin{cases} b & \text{if } x\neq c\text{ and }y=a,\\
		c & \text{if } x=c \text{ or }y\neq a.
	\end{cases}	
	\]
	Then $(X,\sigma,\tau)$ is a left non-degenerate solution that is not right non-degenerate, and
	$a\sim b$. However $\tau_{\tau_a(a)}(a)=b\neq c=\tau_{\tau_a(b)}(a)$, which means that $\tau_a(a)\not\sim \tau_a(b)$
  and therefore $\sim$ is not compatible with the operation $\tau$.
 \end{exm}

Nevertheless, there is a class of right degenerate solutions for which a retract equivalence can be defined. In order to work with them, we recall a definition from~\cite{Rump19}.

\begin{de}
A {\em $q$-cycle set} $(X,\cdot,:)$ is a set with two binary operations that
satisfy, for all $x,y,z\in X$,
\begin{align}
	(x\cdot y)\cdot(x\cdot z)&=(y:x)\cdot (y\cdot z)\label{eq:qcs1}\\
	(x\cdot y):(x\cdot z)&=(y:x)\cdot (y: z)\label{eq:qcs2}\\
	(x\cdot y):(x: z)&=(y:x): (y: z)\label{eq:qcs3}
\end{align}
and the mapping $y\mapsto x\cdot y$ is invertible, for all $x\in X$.
The $q$-cycle set is called {\em regular} if the mapping  $y\mapsto x: y$ is also invertible, for all $x\in X$.
\end{de}
\noindent
Note that in fact 
any regular $q$-cycle set has structure of two left-quasigroups. 

The notion of $q$-cycle sets brings a new perspective to solutions
since, according to~\cite{Rump19}, a $q$-cycle set is essentially the same as a left non-degenerate solution and a regular~$q$-cycle set
is essentially the same as a bijective left non-degenerate
solution~\cite[Proposition 1]{Rump19}. In more details, let $(X,\cdot,:)$ be a $q$-cycle set with $\sigma_x(y):=x\cdot y$, for $x,y\in X$. Then $(X,\lambda, \rho)$ for $\lambda_x(y)=\sigma_x^{-1}(y)$ and $\rho_x(y)=\sigma_x^{-1}(y):x$ is a left non-degenerate solution. On the other hand, if $(X,\sigma, \tau)$ is a left non-degenerate solution then $(X,\cdot,:)$ for operations given by: $x\cdot y:=\sigma^{-1}_x(y)$ and $x:y=\tau_{\sigma_y^{-1}(x)}(y)$, is a $q$-cycle set.
This correspondence naturally brings the following definition.

\begin{de}
	A solution $(X,\sigma,\tau)$ is called {\em regular} if it is left non-degenerate and bijective.
\end{de}

If a solution $(X,r)$ where $r=(\sigma,\tau)$, is bijective then
the inverse mapping $r^{-1}$ defines a solution as well
and we denote $r^{-1}=(\hat\sigma,\hat\tau)$. We obtain, from the definition, the following formulas:

\begin{align}
	\sigma_{\hat\sigma_x(y)}\hat\tau_y(x)&=x \label{rr:1}\\
	\tau_{\hat\tau_y(x)}\hat\sigma_x(y)&=y \label{rr:2}\\
	\hat\sigma_{\sigma_x(y)}\tau_y(x)&=x \label{rr:3}\\
	\hat\tau_{\tau_y(x)}\sigma_x(y)&=y \label{rr:4}
\end{align}

\begin{lemma}\label{lm:invreg}
	The inverse solution to a regular solution is regular.
\end{lemma}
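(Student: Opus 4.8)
To show the inverse solution to a regular solution is regular, I need two things: (1) $(X,\hat\sigma,\hat\tau)$ is itself a solution, and (2) it is left non-degenerate and bijective. Point (1) is already granted in the excerpt, since it is observed just before the lemma that whenever $r=(\sigma,\tau)$ is bijective, $r^{-1}=(\hat\sigma,\hat\tau)$ defines a solution. Bijectivity in (2) is immediate and formal: the inverse of $r^{-1}$ is $r$ itself, which is a bijection of $X^2$, so $r^{-1}$ is bijective. Hence the entire content of the lemma reduces to showing that $r^{-1}$ is \emph{left non-degenerate}, i.e.\ that each $\hat\sigma_x$ is a bijection of $X$.

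\smallskip

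\noindent First I would fix $x\in X$ and produce an explicit inverse for $\hat\sigma_x$. The natural candidate is read off from formulas~\eqref{rr:3} and~\eqref{rr:4}. From~\eqref{rr:3}, $\hat\sigma_{\sigma_x(y)}\tau_y(x)=x$; this suggests that the map $\hat\sigma_u$ is reversed by something built from $\sigma$ and $\tau$. More directly, I would argue as follows. Since $(X,\sigma,\tau)$ is left non-degenerate, each $\sigma_x$ is a bijection; since it is bijective, $r$ is a bijection of $X^2$. Composing, for a fixed $x$, consider the map $y\mapsto \hat\sigma_x(y)$. I claim it is surjective: given $z\in X$, I want $y$ with $\hat\sigma_x(y)=z$. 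Apply $r$ to the pair $(z, w)$ for a suitable $w$: writing $(\hat\sigma_x(y),\hat\tau_y(x)) = r^{-1}(x,y)$, surjectivity of $\hat\sigma_x$ in the first coordinate as $y$ ranges over $X$ would follow if, for each $z$, the ``column'' $\{x\}\times X$ meets the image under $r$ of the ``row'' through $z$; this is exactly where one uses that $r$ is a bijection of $X^2$ together with left non-degeneracy of $r$ (which controls the first coordinate). Concretely: by~\eqref{rr:4}, $\hat\tau_{\tau_y(x)}\sigma_x(y)=y$, so the assignment $(x,y)\mapsto(\sigma_x(y),\tau_y(x))$, precomposed appropriately, inverts the $\hat\sigma,\hat\tau$ pair; spelling this out coordinatewise gives a two-sided inverse $\hat\sigma_x^{-1}(z) = \tau_{?}(x)$ with the index determined by solving $\sigma_x(?)=$ something, which is solvable precisely because $\sigma_x$ is a bijection.

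\smallskip

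\noindent More cleanly, I expect the cleanest route is: for fixed $x$, define $\beta_x\colon X\to X$ by $\beta_x(y)=\tau_{\sigma_x^{-1}(y)}(x)$ — this is well-defined because $\sigma_x$ is invertible — and check via~\eqref{rr:3} and~\eqref{rr:4} that $\beta_x$ is a two-sided inverse of $\hat\sigma_x$. Indeed, \eqref{rr:3} rewritten with $y$ replaced by $\sigma_x^{-1}(y)$ reads $\hat\sigma_y\bigl(\tau_{\sigma_x^{-1}(y)}(x)\bigr)=x$ — hmm, the subscript on $\hat\sigma$ needs care — so I would instead substitute into~\eqref{rr:3} directly: setting $y':=\sigma_x^{-1}(z)$ gives $\hat\sigma_z\bigl(\tau_{y'}(x)\bigr)=x$, which is not quite what I want; the correct bookkeeping is to substitute into~\eqref{rr:3} so that the \emph{outer} subscript becomes $x$. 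This requires choosing the pair whose $\sigma$-image in the first slot equals $x$: since left non-degeneracy gives a pair $(a,b)$ with $\sigma_a(b)=x$ for any prescribed $a$, one then reads $\hat\sigma_x(\tau_b(a))=a$, and inverting the correspondence $a\mapsto a$ (trivial) together with tracking $b=\sigma_a^{-1}(x)$ yields the formula for $\hat\sigma_x^{-1}$.

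\smallskip

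\noindent \textbf{Main obstacle.} The only real subtlety is the index bookkeeping in~\eqref{rr:1}--\eqref{rr:4}: these identities mix the roles of the two arguments, and one must substitute carefully so that the composite one wants to invert (namely $\hat\sigma_x$ for a \emph{fixed} $x$) actually appears, rather than $\hat\sigma$ with a moving subscript. Once the substitution is set up correctly — using left non-degeneracy of $(X,\sigma,\tau)$ to solve $\sigma_a(b)=x$ — the verification that the two maps compose to the identity on both sides is a short direct computation from~\eqref{rr:3} and~\eqref{rr:4}. Bijectivity of $r^{-1}$ and the fact that $r^{-1}$ is a solution require no work beyond citing the remarks preceding the lemma, so I would state those in a single sentence and devote the body of the proof to exhibiting $\hat\sigma_x^{-1}$.
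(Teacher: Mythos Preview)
Your overall plan is exactly the paper's: reduce to left non-degeneracy of $(X,\hat\sigma,\hat\tau)$ and exhibit an explicit two-sided inverse of $\hat\sigma_x$ using \eqref{rr:1}--\eqref{rr:4}. However, the execution has two concrete slips.

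First, your candidate $\beta_x(y)=\tau_{\sigma_x^{-1}(y)}(x)$ has $x$ and $y$ swapped. As you yourself notice, plugging into \eqref{rr:3} gives $\hat\sigma_y\big(\beta_x(y)\big)=x$, not $\hat\sigma_x\big(\beta_x(y)\big)=y$. The correct formula, which you almost write down at the end (``tracking $b=\sigma_a^{-1}(x)$''), is
\[
\hat\sigma_x^{-1}(y)=\tau_{\sigma_y^{-1}(x)}(y).
\]
Then \eqref{rr:3} with $(x,y)\mapsto(y,\sigma_y^{-1}(x))$ immediately yields $\hat\sigma_x\big(\tau_{\sigma_y^{-1}(x)}(y)\big)=y$, giving the right inverse.

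Second, the other composite does \emph{not} come from \eqref{rr:3} and \eqref{rr:4} as you assert; it needs \eqref{rr:1} and \eqref{rr:2}. Indeed, to show $\tau_{\sigma_{\hat\sigma_x(y)}^{-1}(x)}\big(\hat\sigma_x(y)\big)=y$, first use \eqref{rr:1} to rewrite $\sigma_{\hat\sigma_x(y)}^{-1}(x)=\hat\tau_y(x)$, and then \eqref{rr:2} finishes it. This is precisely what the paper does. So your ``main obstacle'' paragraph is right in spirit---the subscript bookkeeping is the whole point---but the identities you name for the second direction are the wrong pair.
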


\begin{proof}
	For a regular solution $(X,\sigma,\tau)$,
	we will prove that $\hat\sigma_x^{-1}(y)=\tau_{\sigma^{-1}_y(x)}(y)$,
	for all $x,y\in X$:
	\begin{align*}
		\hat\sigma_x(y)\tau_{\sigma^{-1}_y(x)}(y)
   &=\hat\sigma_{\sigma_y\sigma_y^{-1}(x)}\tau_{\sigma^{-1}_y(x)}(y)\stackrel{\eqref{rr:3}}=  y,\text{ and}\\
				\tau_{\sigma^{-1}_{\hat\sigma_x(y)}}(x)\hat\sigma_x(y)&\stackrel{\eqref{rr:1}}=
\tau_{\hat\tau_y(x)}\hat\sigma_x(y)\stackrel{\eqref{rr:2}}=y.\qedhere
	\end{align*}
\end{proof}

From the proof of Lemma \ref{lm:invreg} 
we can see that, for any regular solution,
$x:y=\tau_{\sigma^{-1}_y(x)}(y)=\hat\sigma_x^{-1}(y)$.

\begin{corollary}
    A bijective solution $(X,\sigma,\tau)$ is non-degenerate if and only if the inverse solution $(X,\hat{\sigma},\hat{\tau})$ is non-degenerate.
\end{corollary}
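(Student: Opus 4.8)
A bijective solution $(X,\sigma,\tau)$ is non-degenerate if and only if the inverse solution $(X,\hat\sigma,\hat\tau)$ is non-degenerate.

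The plan is to use that the inverse of the inverse solution is the original solution, so it suffices to prove one implication: if $(X,\sigma,\tau)$ is bijective and non-degenerate, then $(X,\hat\sigma,\hat\tau)$ is non-degenerate; the converse then follows by applying this implication to $(X,\hat\sigma,\hat\tau)$, whose inverse solution is $(X,\sigma,\tau)$. So assume $(X,\sigma,\tau)$ is bijective and non-degenerate, and note that $r^{-1}$, being a bijection of $X^2$, is in particular injective --- a fact I will use for the injectivity of the diagonal-type maps of the inverse, while the non-degeneracy of $(X,\sigma,\tau)$ will be used for their surjectivity.

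First I would show that each $\hat\sigma_a$ is a bijection. For surjectivity: given $x\in X$, left non-degeneracy of $(X,\sigma,\tau)$ lets me set $y:=\sigma_x^{-1}(a)$, so that $\sigma_x(y)=a$, and then \eqref{rr:3} reads $\hat\sigma_a\big(\tau_{\sigma_x^{-1}(a)}(x)\big)=x$, so every $x$ lies in the image. For injectivity: if $\hat\sigma_a(y_1)=\hat\sigma_a(y_2)=:c$, then \eqref{rr:1} with $x:=a$ gives $\sigma_c(\hat\tau_{y_1}(a))=a=\sigma_c(\hat\tau_{y_2}(a))$, whence $\hat\tau_{y_1}(a)=\hat\tau_{y_2}(a)$ since $\sigma_c$ is injective; therefore $r^{-1}(a,y_1)=r^{-1}(a,y_2)$, and injectivity of $r^{-1}$ forces $y_1=y_2$. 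Thus $(X,\hat\sigma,\hat\tau)$ is left non-degenerate.

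The proof that each $\hat\tau_b$ is a bijection is the mirror image, exchanging the roles of $\sigma$ and $\tau$ and replacing \eqref{rr:1}, \eqref{rr:3} by \eqref{rr:2}, \eqref{rr:4}: surjectivity comes from \eqref{rr:4} after choosing $x:=\tau_y^{-1}(b)$ (using right non-degeneracy of $(X,\sigma,\tau)$), and injectivity from \eqref{rr:2} specialised at $y:=b$, using that $\tau_c$ and $r^{-1}$ are injective. Hence $(X,\hat\sigma,\hat\tau)$ is right non-degenerate as well, so it is non-degenerate, which completes the implication and with it the corollary. I do not expect a genuine obstacle here; the only point worth flagging is that the injectivity of $\hat\sigma_a$ and $\hat\tau_b$ really does use the bijectivity of $r$ (through injectivity of $r^{-1}$), whereas their surjectivity needs only the corresponding one-sided non-degeneracy of $(X,\sigma,\tau)$, and the identities \eqref{rr:1}--\eqref{rr:4} supply everything else mechanically. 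Alternatively one could obtain left non-degeneracy of the inverse directly from Lemma~\ref{lm:invreg} and right non-degeneracy via the symmetry conjugating $r$ by the coordinate flip on $X^2$, which swaps the two non-degeneracy conditions while preserving bijectivity; the direct computation above is shorter.
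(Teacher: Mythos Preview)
Your proof is correct and essentially follows the paper's approach: the paper states this corollary without proof, as an immediate consequence of Lemma~\ref{lm:invreg} (and its left--right dual), whose proof exhibits the explicit formula $\hat\sigma_x^{-1}(y)=\tau_{\sigma_y^{-1}(x)}(y)$ --- exactly what your surjectivity step produces. The only cosmetic difference is that you establish injectivity of $\hat\sigma_a$ via injectivity of $r^{-1}$ rather than by verifying the second half of the inverse formula, and you yourself note the Lemma~\ref{lm:invreg}-plus-flip route at the end.
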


Having a bijective solution we can naturally define the following equivalence relation:
\begin{equation}\label{eq:inv_retract}
	x\mathbin{\hat\sim} y \quad\Leftrightarrow \quad {\hat\sigma_x=\hat\sigma_y} \wedge {\hat\tau_x=\hat\tau_y}.
\end{equation}
We shall prove later that there are classes of solutions where the relations $\sim$ and $\hat\sim$ coincide. Actually, so far we do not have any example where these relations are different. And the same can be said about another formally different relation:

\begin{de}[\cite{CCS}]
	Let $(X,\cdot,:)$ be a $q$-cycle set. We define an equivalence $\sim_:$
	on~$X$ as follows:
	\begin{align}\label{rel:qset}
 &x\sim_: y \qquad \Leftrightarrow \qquad x\cdot z=y\cdot z \text{ and } x:z=y:z, \text{ for all }z\in X.
 \end{align}
In the language of solutions, the definition looks as follows:

Let $(X,\sigma,\tau)$ be a left non-degenerate set. We define an equivalence $\sim_:$
	on~$X$ as follows:
	\begin{align}\label{rel:qset2}
 &x\sim_: y \qquad \Leftrightarrow \qquad \sigma_x^{-1}=\sigma_y^{-1} \text{ and } \tau_{\sigma^{-1}_z(x)}(z)=\tau_{\sigma^{-1}_z(y)}(z), \text{ for all }z\in X.
 \end{align}
\end{de}

Analogously as in the previous case, we may define, for a regular solution, an equivalence using the operations of the inverse solution:
\begin{align}\label{rel:qset3}
 &x\mathbin{\hat{\sim}_:} y \qquad \Leftrightarrow \qquad \hat\sigma_x^{-1}=\hat\sigma_y^{-1} \text{ and } \hat\tau_{\hat\sigma^{-1}_z(x)}(z)=\hat\tau_{\hat\sigma^{-1}_z(y)}(z), \text{ for all }z\in X.
 \end{align}
However, 
in this case, the relation \eqref{rel:qset2} may be express equivalently:
\begin{align}\label{rel:reg}
&x\sim_{:} y \qquad \Leftrightarrow \qquad  \sigma^{-1}_x=\sigma^{-1}_y\qquad \text{and} \qquad \hat\sigma^{-1}_x=\hat\sigma^{-1}_y,
\end{align}
and we see that $\sim_:$ and $\hat\sim_:$ coincide since $\widehat{\hat\sigma}_x=\sigma_x$, for any~$x\in X$.

\begin{proposition}[{\cite[Proposition 10]{CCS}}]\label{cong-qmagma}
	Let $(X,\cdot,:)$ be a regular $q$-cycle set. Then the relation $\sim_:$ is compatible with the operations $\cdot$ and $:$.
\end{proposition}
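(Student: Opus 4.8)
The plan is to reduce the statement to a single assertion: \emph{for every $a\in X$, the maps $y\mapsto a\cdot y$ and $y\mapsto a:y$ carry $\sim_:$-related elements to $\sim_:$-related elements}. Granting this, compatibility is immediate from the definition \eqref{rel:qset} of $\sim_:$: if $a\sim_: b$ and $c\sim_: d$, then in particular $a\cdot d=b\cdot d$ and $a:d=b:d$, so applying the assertion to $c\sim_: d$ yields $a\cdot c\sim_: a\cdot d=b\cdot d$ and $a:c\sim_: a:d=b:d$, which is exactly compatibility of $\sim_:$ with $\cdot$ and with $:$.

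So the real content is the assertion. I would fix $a$ and a pair $c\sim_: d$ and unwind it into four equalities that must hold for every $w\in X$: $(a\cdot c)\cdot w=(a\cdot d)\cdot w$ and $(a\cdot c):w=(a\cdot d):w$ (these two together say $a\cdot c\sim_: a\cdot d$), and $(a:c)\cdot w=(a:d)\cdot w$ and $(a:c):w=(a:d):w$ (these say $a:c\sim_: a:d$). The uniform idea is to use invertibility of a left translation to write $w$ in a convenient form and then hit the product with one of the axioms \eqref{eq:qcs1}--\eqref{eq:qcs3} so that $a\cdot c$, resp.\ $a:c$, becomes the outer operand and only terms controlled by $\sim_:$ are left. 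For the first two equalities I would write $w=a\cdot z$ and apply \eqref{eq:qcs1} and \eqref{eq:qcs2} with $x=a$, $y=c$ (resp.\ $y=d$), rewriting $(a\cdot c)\cdot(a\cdot z)=(c:a)\cdot(c\cdot z)$ and $(a\cdot c):(a\cdot z)=(c:a)\cdot(c:z)$; since $c\sim_: d$ gives $c:a=d:a$, $c\cdot z=d\cdot z$ and $c:z=d:z$, the $d$-analogues coincide with these. For $(a:c)\cdot w=(a:d)\cdot w$ I would again put $w=a\cdot z$ but apply \eqref{eq:qcs1} ``the other way around'', with $x=c$ (resp.\ $x=d$) and $y=a$, so that $(a:c)\cdot(a\cdot z)=(c\cdot a)\cdot(c\cdot z)$, after which $c\cdot a=d\cdot a$ and $c\cdot z=d\cdot z$ finish it.

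The remaining equality $(a:c):w=(a:d):w$ is the delicate one, and the only place where the regular hypothesis is used: to apply \eqref{eq:qcs3} with $x=c$, $y=a$ I must be able to write $w=a:z$, which needs the map $y\mapsto a:y$ to be surjective. Once that is available, \eqref{eq:qcs3} gives $(a:c):(a:z)=(c\cdot a):(c:z)$ and its $d$-analogue, and $c\cdot a=d\cdot a$ together with $c:z=d:z$ closes the argument. Thus I expect the main obstacle to be purely organisational: for each of the four identities one has to guess the correct axiom among \eqref{eq:qcs1}--\eqref{eq:qcs3} and the correct instantiation of its variables so that the outer operand is the desired term and everything else is governed by $\sim_:$; the two cases involving $a:c$ are the non-obvious ones, since there the axioms must be used with $c$ (and $d$), not $a$, in the role of ``$x$'', and the very last of them is what forces the regular setting.
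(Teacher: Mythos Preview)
Your argument is correct. The reduction to ``left translations preserve $\sim_:$'' is clean and valid, and each of the four verifications goes through exactly as you outline: axioms \eqref{eq:qcs1}--\eqref{eq:qcs2} with $x=a$ handle $a\cdot c\sim_: a\cdot d$, while the swapped instantiation $x=c$ (resp.\ $x=d$), $y=a$ in \eqref{eq:qcs1} and \eqref{eq:qcs3} handles $a:c\sim_: a:d$; your identification of the last case as the unique spot where surjectivity of $y\mapsto a:y$ is needed is accurate.

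Note that the paper itself does not prove this proposition: it is quoted from \cite[Proposition 10]{CCS}, so there is no ``paper's own proof'' to compare against. What you have written is a self-contained direct verification, and it is precisely the kind of argument one would expect the cited source to contain.
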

Proposition \ref{cong-qmagma} 
tells us that the operations $\cdot$ and $:$ are well defined on the equivalence 
classes, however, it tells us nothing whether the operations $\cdot$ and $:$ are still invertible in the quotient.
Thus we do not know whether the quotient is a $q$-cycle set or not, let alone a regular one. 

In the language of solutions, we do not know, whether the equivalence $\sim_:$ is compatible with $\sigma$ and $\hat\sigma$, let alone with $\tau$. We have a proof of the compatibility under some additional assumptions only.

\begin{proposition}
	Let $(X,\sigma,\tau)$ be a regular solution and  assume that, 
 for any $x\in X$, the permutations $\sigma_x$ and $\hat\sigma_x$ are of finite order. Then the relation $\sim_:$ is compatible with the operations $\sigma$, $\sigma^{-1}$, $\hat\sigma$ and $\hat\sigma^{-1}$.
 \end{proposition}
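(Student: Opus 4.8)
The plan is to reduce the statement to Proposition~\ref{cong-qmagma} and then use the finite-order hypothesis as a bootstrap. First I would recall the dictionary from \cite{Rump19}: the regular solution $(X,\sigma,\tau)$ carries the associated regular $q$-cycle set $(X,\cdot,:)$ with $x\cdot y=\sigma_x^{-1}(y)$ and, by the remark following Lemma~\ref{lm:invreg}, $x:y=\hat\sigma_x^{-1}(y)$. Under this dictionary $\sim_{:}$ is precisely the relation \eqref{rel:reg}, i.e.\ $x\sim_{:} y$ iff $\sigma_x=\sigma_y$ and $\hat\sigma_x=\hat\sigma_y$ (equivalently $\sigma_x^{-1}=\sigma_y^{-1}$ and $\hat\sigma_x^{-1}=\hat\sigma_y^{-1}$). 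Proposition~\ref{cong-qmagma} then says exactly that $\sim_{:}$ is compatible with the binary operations $(x,y)\mapsto\sigma_x^{-1}(y)$ and $(x,y)\mapsto\hat\sigma_x^{-1}(y)$; so compatibility with $\sigma^{-1}$ and $\hat\sigma^{-1}$ is already in hand, and it remains only to treat $\sigma$ and $\hat\sigma$.

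For $\sigma$, I would argue as follows. Suppose $x_1\sim_{:} x_2$ and $y_1\sim_{:} y_2$. Then $\sigma_{x_1}=\sigma_{x_2}$; call this permutation $f$ and let $n$ be its order, finite by hypothesis. I claim that $\sigma_{x_1}^{-k}(y_1)\sim_{:}\sigma_{x_2}^{-k}(y_2)$ for every $k\ge 0$, which I would prove by induction on $k$: the case $k=0$ is the assumption $y_1\sim_{:} y_2$, and the inductive step follows by applying compatibility with $\sigma^{-1}$ to the two pairs $x_1\sim_{:} x_2$ and $\sigma_{x_1}^{-k}(y_1)\sim_{:}\sigma_{x_2}^{-k}(y_2)$ (note that the first arguments $x_1,x_2$ remain fixed along the whole induction). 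Since $f^{n}=\id$, we have $\sigma_{x_i}=\sigma_{x_i}^{-(n-1)}$, so taking $k=n-1$ yields $\sigma_{x_1}(y_1)\sim_{:}\sigma_{x_2}(y_2)$, which is compatibility with $\sigma$. The argument for $\hat\sigma$ is identical, replacing $\sigma$ by $\hat\sigma$ and $\sigma^{-1}$ by $\hat\sigma^{-1}$ and using that $\hat\sigma_{x_1}=\hat\sigma_{x_2}$ has finite order.

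The proof is short and I do not expect a genuine obstacle; the only delicate point is the bootstrap step, where one must use that $\sigma_{x_1}$ and $\sigma_{x_2}$ \emph{literally coincide}, so that a single exponent $n-1$ simultaneously expresses $\sigma_{x_1}$ and $\sigma_{x_2}$ as the same power of the respective inverses. This is guaranteed precisely by $x_1\sim_{:} x_2$, and it is the only place the finite-order hypothesis is used. In particular no compatibility with $\tau$ or $\hat\tau$ is claimed, consistently with the discussion preceding the proposition.
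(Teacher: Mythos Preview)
Your proof is correct and follows essentially the same strategy as the paper: both establish by induction on~$k$ that $\sigma_{x_1}^{-k}(y_1)\sim_:\sigma_{x_2}^{-k}(y_2)$ for all $k\geq 0$, and then exploit the finite-order hypothesis (with $\sigma_{x_1}=\sigma_{x_2}$) to recover $\sigma_{x_1}(y_1)\sim_:\sigma_{x_2}(y_2)$. Your inductive step is in fact slightly cleaner than the paper's---you simply re-apply Proposition~\ref{cong-qmagma} at each stage, whereas the paper reproves the step by an explicit computation with the $q$-cycle set identities \eqref{eq:qcs1}--\eqref{eq:qcs2}.
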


\begin{proof}
Clearly, $\sigma_x=\sigma_y$ if and only if $\sigma^{-1}_x=\sigma^{-1}_y$ and the same is also true for $\hat\sigma_x$ and $\hat\sigma^{-1}_x$.

Let $a\sim a'$ and $b\sim b'$. 
We know,
by Proposition~\ref{cong-qmagma}, that $\sigma_{\sigma_a^{-1}(b)}^{-1}=\sigma_{\sigma_{a'}^{-1}(b')}^{-1}$
and
$\hat\sigma_{\sigma_a^{-1}(b)}^{-1}=\hat\sigma_{\sigma_{a'}^{-1}(b')}^{-1}$. 
We prove, by an induction on~$k$, that $\sigma_{\sigma_a^{-k}(b)}^{-1}=\sigma_{\sigma_{a'}^{-k}(b')}^{-1}$
and $\hat\sigma_{\sigma_a^{-k}(b)}^{-1}=\hat\sigma_{\sigma_{a'}^{-k}(b')}^{-1}$,
for each $k\geq 0$. The claim is trivial for $k=0$, for $k>0$, we have
\begin{align*}
\sigma_{\sigma_a^{-k}(b)}^{-1}(z)&=
\sigma_a^{-k}(b) \cdot z = (a\cdot \sigma_a^{1-k}(b))\cdot (a\cdot \sigma_a(z))
\stackrel{\eqref{eq:qcs1}}{=}
(\sigma_a^{1-k}(b): a)\cdot(\sigma_a^{1-k}(b)\cdot \sigma_a(z))\\
&\stackrel{\text{ind.}}=(\sigma_{a'}^{1-k}(b'): a)\cdot(\sigma_{a'}^{1-k}(b')\cdot \sigma_{a'}(z))
\stackrel{\eqref{eq:qcs1}}=(a\cdot \sigma_{a'}^{1-k}(b'))\cdot (a\cdot \sigma_{a'}(z))\\
&=(a'\cdot \sigma_{a'}^{1-k}(b'))\cdot (a'\cdot \sigma_{a'}(z))
=\sigma_{a'}^{-k}(b') \cdot z
=\sigma_{\sigma_{a'}^{-k}(b')}^{-1}(z),\\
\hat\sigma_{\sigma_a^{-k}(b)}^{-1}(z)&=
\sigma_a^{-k}(b) : z = (a\cdot \sigma_a^{1-k}(b)): (a\cdot \sigma_a(z))
\stackrel{\eqref{eq:qcs2}}=(\sigma_a^{1-k}(b): a)\cdot(\sigma_a^{1-k}(b): \sigma_a(z))\\
&\stackrel{\text{ind.}}=(\sigma_{a'}^{1-k}(b'): a)\cdot(\sigma_{a'}^{1-k}(b'): \sigma_{a'}(z)
\stackrel{\eqref{eq:qcs2}}=(a'\cdot \sigma_{a'}^{1-k}(b')): (a'\cdot \sigma_{a'}(z))
=\hat\sigma_{\sigma_{a'}^{-k}(b')}^{-1}(z).
\end{align*}
Now let $k$ be the order of~$\sigma_a$. Then
\[\sigma_{\sigma_a(b)}=\sigma_{\sigma^{1-k}_a(b)}=\sigma_{\sigma^{1-k}_{a'}(b')}=\sigma_{\sigma_{a'}(b')}\]
and analogously  $\hat\sigma_{\sigma_a(b)}=\hat\sigma_{\sigma_{a'}(b')}$. The proof for $\sigma_{\hat\sigma_a(b)}$ and $\hat\sigma_{\hat\sigma_a(b)}$ is symmetric 
and therefore $\sim_:$ is compatible with $\sigma$ and $\hat \sigma$.
\end{proof}

We have three formally different equivalences on~$X$, namely $\sim$, $\sim_:$ and $\hat\sim$, all of them being compatible with some operations. It would be fine to examine their compatibility with other definable operations.

\begin{lemma}
    Let $(X,\sigma,\tau)$ be a regular solution and
    suppose that the relation $\sim_:$ is compatible with the operations $\sigma$, $\sigma^{-1}$, $\hat\sigma$ and $\hat\sigma^{-1}$.
    Then $\sim_:$ is compatible with $\tau$ and $\hat\tau$ as well.
    Moreover, if $(X,\sigma,\tau)$ is non-degenerate then $\sim_:$ is compatible with $\tau^{-1}$ if and only if it is compatible with $\hat\tau^{-1}$.
\end{lemma}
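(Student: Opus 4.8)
The plan is to exploit the correspondence between $\sim_:$ and the operations of the $q$-cycle set, and to use the explicit formulas for the operations of a regular solution, especially the identity $x:y=\tau_{\sigma_y^{-1}(x)}(y)=\hat\sigma_x^{-1}(y)$ derived right after Lemma~\ref{lm:invreg}. Recall from \eqref{rel:reg} that $x\sim_: y$ iff $\sigma_x^{-1}=\sigma_y^{-1}$ and $\hat\sigma_x^{-1}=\hat\sigma_y^{-1}$, equivalently $\sigma_x=\sigma_y$ and $\hat\sigma_x=\hat\sigma_y$; this symmetric reformulation will make the bookkeeping easier and immediately shows $\sim_:=\hat\sim_:$, so that the compatibility statements for $\tau$ and $\hat\tau$ are genuinely symmetric to each other.

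First I would express $\tau$ in terms of the operations already known to respect $\sim_:$. From $\sigma_x(y)=x\cdot y$ one has $x\cdot y=\sigma_x^{-1}(y)$ in the associated $q$-cycle set, hence $\tau_y(x)$ can be recovered: using $x:y=\tau_{\sigma_y^{-1}(x)}(y)$, substitute $x\mapsto \sigma_y(x)$ to get $\tau_y(x)=\sigma_y(x): y=\hat\sigma_{\sigma_y(x)}^{-1}(y)$. Thus $\tau_y(x)$ is a composite of $\sigma$ and $\hat\sigma^{-1}$ (and their inverses) applied in positions where, by hypothesis, $\sim_:$ is compatible. Concretely, if $a\sim_: a'$ and $b\sim_: b'$, then $\sigma_b(a)\sim_: \sigma_{b'}(a')$ by compatibility with $\sigma$ (using also $\sigma_b=\sigma_{b'}$), and then $\hat\sigma_{\sigma_b(a)}^{-1}(b)$ and $\hat\sigma_{\sigma_{b'}(a')}^{-1}(b')$ agree because $\hat\sigma_{\sigma_b(a)}^{-1}=\hat\sigma_{\sigma_{b'}(a')}^{-1}$ (compatibility with $\hat\sigma^{-1}$ applied to the first argument) and they are evaluated at $b$ versus $b'$ which satisfy $b\sim_: b'$; the latter gives equal outputs since $\hat\sigma_c^{-1}$ depends on $c$ only through its $\sim_:$-class when we also know the $\hat\sigma$-value agrees — here I need to be slightly careful and instead argue $\hat\sigma_{\sigma_b(a)}^{-1}(b)\sim_: \hat\sigma_{\sigma_{b'}(a')}^{-1}(b')$, i.e.\ pass the $\sim_:$-relation through, which is exactly what compatibility of $\sim_:$ with $\hat\sigma^{-1}$ provides. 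The symmetric computation with the roles of $\sigma$ and $\hat\sigma$ exchanged yields compatibility with $\hat\tau$, using the analogous formula $\hat\tau_y(x)=\hat\sigma_{\hat\sigma_y(x)}\,{}^{\widehat{\phantom{a}}}{}^{-1}(y)$ obtained from applying the boxed identity to the inverse solution (legitimate by Lemma~\ref{lm:invreg}, since the inverse of a regular solution is regular, and $\widehat{\hat\sigma}=\sigma$).

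For the last clause, assume moreover that $(X,\sigma,\tau)$ is non-degenerate, so each $\tau_y$ and each $\hat\tau_y$ is a bijection and (by the Corollary after Lemma~\ref{lm:invreg}) the inverse solution is non-degenerate too. I would show that compatibility of $\sim_:$ with $\tau^{-1}$ is equivalent to compatibility with $\hat\tau^{-1}$ by relating $\tau^{-1}$ and $\hat\tau$ (and $\hat\tau^{-1}$ and $\tau$) through the formulas \eqref{rr:1}--\eqref{rr:4}. For instance, \eqref{rr:4} reads $\hat\tau_{\tau_y(x)}\sigma_x(y)=y$, which expresses $\hat\tau$ at the point $\tau_y(x)$ in terms of data at $x,y$; rewriting this after the substitution $u=\tau_y(x)$, i.e.\ $x=(\tau_y)^{-1}(u)=\tau^{-1}_y(u)$, gives $\hat\tau_u(\sigma_{\tau^{-1}_y(u)}(y))=y$, hence $\tau^{-1}_y(u)=\sigma^{-1}_{\ ?}(\hat\tau_u^{-1}(y))$ with the subscript itself expressible via already-compatible operations; running this together with the already-established compatibility of $\sim_:$ with $\sigma,\sigma^{-1},\hat\sigma,\hat\sigma^{-1},\hat\tau$ shows that compatibility with $\hat\tau^{-1}$ forces compatibility with $\tau^{-1}$, and the reverse implication is symmetric (swap hats, using $\widehat{\hat r}=r$).

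The main obstacle I anticipate is purely organizational: getting the exact form of the formula for $\tau_y(x)$ as a word in $\sigma^{\pm1}$ and $\hat\sigma^{\pm1}$ right, and then threading the relation $\sim_:$ through that word position by position without accidentally needing compatibility with an operation not yet available. The non-degeneracy in the last clause is needed precisely so that $\tau^{-1}$ and $\hat\tau^{-1}$ make sense as single-valued operations and so that the inverse solution stays in the same class; the argument itself is then a formal manipulation of \eqref{rr:1}--\eqref{rr:4}. I do not expect any deep difficulty beyond careful substitution and keeping track of which argument slot each relation is being pushed through.
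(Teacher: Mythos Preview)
Your approach is essentially the paper's: express $\tau$ (and, by hat-symmetry, $\hat\tau$) as a composite of $\sigma$ and $\hat\sigma^{-1}$, then thread $\sim_:$ through. Two small points to clean up. First, your displayed formula has the subscripts swapped: substituting $x\mapsto\sigma_y(x)$ into $x:y=\tau_{\sigma_y^{-1}(x)}(y)=\hat\sigma_x^{-1}(y)$ yields $\tau_x(y)=\hat\sigma^{-1}_{\sigma_y(x)}(y)$, not $\tau_y(x)=\cdots$; your concrete computation with $a,b$ is nonetheless correct, since $\hat\sigma^{-1}_{\sigma_b(a)}(b)=\tau_a(b)$, and one step is in fact an equality (from $\sigma_b(a)\sim_:\sigma_{b'}(a')$ one gets $\hat\sigma_{\sigma_b(a)}=\hat\sigma_{\sigma_{b'}(a')}$), not merely $\sim_:$. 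Second, for the last clause your rewriting of \eqref{rr:4} gives $\hat\tau_u\bigl(\sigma_{\tau^{-1}_y(u)}(y)\bigr)=y$, but your next step ``$\tau^{-1}_y(u)=\sigma^{-1}_{?}(\hat\tau_u^{-1}(y))$'' is circular because the missing subscript is $\tau^{-1}_y(u)$ itself; read the identity instead as $\hat\tau^{-1}_u(y)=\sigma_{\tau^{-1}_y(u)}(y)$, which expresses $\hat\tau^{-1}$ directly as a composite of $\sigma$ and $\tau^{-1}$, so compatibility with $\tau^{-1}$ (and the already-assumed $\sigma$) immediately gives compatibility with $\hat\tau^{-1}$. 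The reverse implication is then the hat-symmetric statement, exactly as you say.
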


\begin{proof}
 Let $a\sim_: a'$ and $b\sim_:b'$. Then $\sigma_b(a)\sim_:\sigma_{b'}(a')$ and
\[\tau_a(b)=\tau_{\sigma^{-1}_b\sigma_b(a)}(b)=
\hat\sigma^{-1}_{\sigma_b(a)}(b)
=\hat\sigma^{-1}_{\sigma_{b'}(a')}(b)
\sim_:\hat\sigma^{-1}_{\sigma_{b'}(a')}(b')
=\tau_{a'}(b')
\]
and analogously for $\hat\tau_a(b)$.

Suppose now that the solution is non-degenerate and that $\sim_:$ is compatible with $\tau^{-1}$. Then
\[\hat\tau^{-1}_b(a)=\sigma_{\tau^{-1}_a(b)}(a)\sim_:\sigma_{\tau^{-1}_{a'}(b')}(a')=\hat\tau^{-1}_{b'}(a')\]
and the other implication is analogous.
\end{proof}

\begin{lemma}
    Let $(X,\sigma,\tau)$ be a regular solution and
    suppose that the relation $\sim$ is compatible with the operations $\sigma^{-1}$ and $\tau$.
    Then $\sim$ is compatible with $\hat\sigma^{-1}$ as well.
\end{lemma}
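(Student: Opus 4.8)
The plan is to reduce everything to the explicit description of $\hat\sigma^{-1}$ recorded right after Lemma~\ref{lm:invreg}: for any regular solution and all $a,b\in X$,
\[
\hat\sigma_a^{-1}(b)=\tau_{\sigma_b^{-1}(a)}(b).
\]
The point of this identity is that the right-hand side is constructed from $a$ and $b$ using only the operations $\sigma^{-1}$ and $\tau$, and $\sim$ is assumed to be compatible with both. So compatibility of $\sim$ with $\hat\sigma^{-1}$ should drop out of a two-step substitution, with no further input about the solution beyond regularity (which is exactly what makes the displayed formula available, via Lemma~\ref{lm:invreg}).

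Concretely, I would argue as follows. Suppose $a\sim a'$ and $b\sim b'$. First, compatibility of $\sim$ with $\sigma^{-1}$, applied to the pairs $b\sim b'$ and $a\sim a'$, yields $\sigma_b^{-1}(a)\sim\sigma_{b'}^{-1}(a')$. Feeding this equivalence together with $b\sim b'$ into compatibility of $\sim$ with $\tau$ (understood, as in the definition of compatibility, in both arguments simultaneously), one gets
\[
\hat\sigma_a^{-1}(b)=\tau_{\sigma_b^{-1}(a)}(b)\sim\tau_{\sigma_{b'}^{-1}(a')}(b')=\hat\sigma_{a'}^{-1}(b'),
\]
which is precisely the statement that $\sim$ is compatible with $\hat\sigma^{-1}$.

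There is no genuine obstacle here once the formula for $\hat\sigma^{-1}$ is in hand; the only things to keep straight are that regularity (left non-degeneracy plus bijectivity) is what guarantees that formula and that each $\hat\sigma_a$ is actually invertible, and that the hypothesis gives compatibility with $\sigma^{-1}$ and $\tau$ — exactly the two operations used — and not, say, with $\sigma$ itself. If one wished to avoid citing the remark following Lemma~\ref{lm:invreg}, the identity $\hat\sigma_a^{-1}(b)=\tau_{\sigma_b^{-1}(a)}(b)$ can be re-derived on the spot from \eqref{rr:3} exactly as in the proof of that lemma, but invoking it directly is cleaner.
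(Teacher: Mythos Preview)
Your proof is correct and follows essentially the same route as the paper: both arguments rest on the identity $\hat\sigma_a^{-1}(b)=\tau_{\sigma_b^{-1}(a)}(b)$ (from the discussion after Lemma~\ref{lm:invreg}), then use compatibility of~$\sim$ with $\sigma^{-1}$ to obtain $\sigma_b^{-1}(a)\sim\sigma_{b'}^{-1}(a')$ and finish via~$\tau$. The only cosmetic difference is that the paper splits the final step into an equality $\tau_{\sigma_b^{-1}(a)}(b)=\tau_{\sigma_{b'}^{-1}(a')}(b)$ (using that $\sim$-related elements have the same~$\tau$-action by definition) followed by a $\sim$-step in the second argument, whereas you invoke compatibility of~$\tau$ in both arguments at once; these are equivalent.
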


\begin{proof}
 Let $a\sim a'$ and $b\sim b'$. Then $\sigma_b^{-1}(a)\sim_:\sigma^{-1}_{b'}(a')$ and
\[\hat\sigma^{-1}_a(b)=
\tau_{\sigma_b^{-1}(a)}(b)
=\tau_{\sigma_{b'}^{-1}(a')}(b)
\sim\tau_{\sigma_{b'}^{-1}(a')}(b')
=\hat\sigma^{-1}_{a'}(b'). \qedhere
\]
\end{proof}

It turns out that the equivalences defined above coincide if they are congruences

\begin{proposition}\label{prop:con}
Let $(X,\sigma,\tau)$ be a left non-degenerate solution such that the relation $\sim$ is compatible with the operation $\sigma^{-1}$ and the relation $\sim_:$ is compatible with the operation $\sigma$. Then both the relations are equal.
\end{proposition}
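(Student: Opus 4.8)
The plan is to prove the two inclusions $\mathord{\sim}\subseteq\mathord{\sim_:}$ and $\mathord{\sim_:}\subseteq\mathord{\sim}$ separately. The first observation is bookkeeping: since every $\sigma_x$ is a bijection of $X$, the condition $\sigma_x^{-1}=\sigma_y^{-1}$ is equivalent to $\sigma_x=\sigma_y$, so the ``$\sigma$-clause'' in the definition \eqref{eq:retract} of $\sim$ and the ``$\sigma$-clause'' in the definition \eqref{rel:qset2} of $\sim_:$ are literally the same requirement. Hence in both directions only the ``$\tau$-clauses'' need work, and in each direction the compatibility hypothesis in the statement is exactly what converts one form of the $\tau$-condition into the other.

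For $\mathord{\sim}\subseteq\mathord{\sim_:}$: assume $x\sim y$, so $\sigma_x=\sigma_y$ and $\tau_x=\tau_y$. Fix $z\in X$. Since $z\sim z$ and $x\sim y$, compatibility of $\sim$ with the binary operation $\sigma^{-1}$ gives $\sigma_z^{-1}(x)\sim\sigma_z^{-1}(y)$, whence $\tau_{\sigma_z^{-1}(x)}=\tau_{\sigma_z^{-1}(y)}$ by \eqref{eq:retract}; evaluating at $z$ yields $\tau_{\sigma_z^{-1}(x)}(z)=\tau_{\sigma_z^{-1}(y)}(z)$. As $z$ was arbitrary and $\sigma_x^{-1}=\sigma_y^{-1}$, the definition \eqref{rel:qset2} gives $x\sim_: y$.

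For $\mathord{\sim_:}\subseteq\mathord{\sim}$: assume $x\sim_: y$, so $\sigma_x=\sigma_y$ and $\tau_{\sigma_z^{-1}(x)}(z)=\tau_{\sigma_z^{-1}(y)}(z)$ for every $z\in X$. Fix $w\in X$. Since $w\sim_: w$ and $x\sim_: y$, compatibility of $\sim_:$ with the binary operation $\sigma$ gives $\sigma_w(x)\sim_:\sigma_w(y)$. Now apply the $\tau$-clause of \eqref{rel:qset2} to this pair, specialising the free parameter to the value $w$: it reads $\tau_{\sigma_w^{-1}(\sigma_w(x))}(w)=\tau_{\sigma_w^{-1}(\sigma_w(y))}(w)$, i.e.\ $\tau_x(w)=\tau_y(w)$. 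Since $w$ was arbitrary, $\tau_x=\tau_y$, and together with $\sigma_x=\sigma_y$ this is exactly $x\sim y$.

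I do not expect a real obstacle; the only step needing a moment's thought is the second inclusion, where one must notice that testing the (a priori weaker-looking) $\tau$-condition of $\sim_:$ on the \emph{shifted} pair $\sigma_w(x),\sigma_w(y)$ and at the single parameter $z=w$ collapses, via the cancellation $\sigma_w^{-1}\sigma_w=\id$, to precisely $\tau_x(w)=\tau_y(w)$. If preferred, the same computation can be run in the $q$-cycle set language using $x\cdot y=\sigma_x^{-1}(y)$, $x:y=\tau_{\sigma_y^{-1}(x)}(y)$ together with the identity $\tau_x(w)=\sigma_w(x):w$, but the direct verification above is the shortest route.
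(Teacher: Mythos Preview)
Your proof is correct and follows essentially the same approach as the paper: both directions use exactly the compatibility hypothesis stated (of $\sim$ with $\sigma^{-1}$ for one inclusion, of $\sim_:$ with $\sigma$ for the other) together with the cancellation $\sigma_w^{-1}\sigma_w=\id$ to match up the $\tau$-clauses. The paper's version is more terse, but the key computation $\tau_x(z)=\tau_{\sigma_z^{-1}\sigma_z(x)}(z)=\tau_{\sigma_z^{-1}\sigma_z(y)}(z)=\tau_y(z)$ (using $\sigma_z(x)\sim_:\sigma_z(y)$) is identical to yours.
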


\begin{proof}
Let $x,y,z\in X$. Clearly, $\sigma_x=\sigma_y$ if and only if $\sigma^{-1}_x=\sigma^{-1}_y$.
Suppose now $x\sim y$, that means $\sigma_x=\sigma_y$ and $\tau_x=\tau_y$. Then $\sigma_z^{-1}(x)\sim\sigma_z^{-1}(y)$ and therefore 
$\tau_{\sigma_z^{-1}(x)}(z)=\tau_{\sigma_z^{-1}(y)}(z)$.

On the other hand, if $\sigma^{-1}_x=\sigma^{-1}_y$ and $\tau_{\sigma_z^{-1}(x)}(z)=\tau_{\sigma_z^{-1}(y)}(z)$ then
\[\tau_x(z)=\tau_{\sigma_z^{-1}\sigma_z(x)}(z)
=\tau_{\sigma_z^{-1}\sigma_z(y)}(z)
=\tau_y(z)\]
since $\sigma_z(x)\sim_:\sigma_z(y)$.
\end{proof}

\begin{corollary}
    Let $(X,\sigma,\tau)$ be a finite non-degenerate bijective
    solution. Then the relations $\sim$, $\sim_:$ and $\hat\sim$ coincide.    
\end{corollary}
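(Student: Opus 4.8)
The plan is to combine Proposition~\ref{prop:con} with the earlier compatibility lemmas, using finiteness only to upgrade compatibility into something we can actually apply. By Proposition~\ref{prop:con}, to conclude $\mathord{\sim} = \mathord{\sim_:}$ it suffices to show that for a finite non-degenerate bijective solution the relation $\sim$ is compatible with $\sigma^{-1}$ and the relation $\sim_:$ is compatible with $\sigma$. For the latter, one of the preceding propositions already gives that $\sim_:$ is compatible with $\sigma$, $\sigma^{-1}$, $\hat\sigma$ and $\hat\sigma^{-1}$, provided all the permutations $\sigma_x$ and $\hat\sigma_x$ have finite order; since $X$ is finite this hypothesis is automatic, so $\sim_:$ is a congruence for $\sigma$. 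Dually, working with the inverse solution $(X,\hat\sigma,\hat\tau)$, which is again finite, non-degenerate and bijective by the corollary following Lemma~\ref{lm:invreg}, the same proposition shows $\hat\sim_:$ is compatible with $\hat\sigma$; but by equation~\eqref{rel:reg} and the remark after it, $\mathord{\sim_:} = \mathord{\hat\sim_:}$, so this is no new information — it is $\mathord{\sim_:}$ compatible with $\sigma$ that we need and already have.

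Next I would establish compatibility of $\sim$ with $\sigma^{-1}$. The relation $\sim$ is, by definition, $\sigma_x=\sigma_y \wedge \tau_x=\tau_y$, equivalently $\sigma_x^{-1}=\sigma_y^{-1}\wedge\tau_x=\tau_y$. For a finite non-degenerate bijective solution the relation $\sim$ is known (Lebed--Vendramin, cited in the excerpt for finite bijective non-degenerate injective solutions, and also via \cite{JPZ19}) to be a congruence — in particular compatible with $\sigma$, $\sigma^{-1}$, $\tau$, $\tau^{-1}$. So $\sim$ is compatible with $\sigma^{-1}$. With both hypotheses of Proposition~\ref{prop:con} verified, that proposition yields $\mathord{\sim}=\mathord{\sim_:}$.

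It remains to bring $\hat\sim$ into the picture, which is the step I expect to be the main obstacle, since the excerpt explicitly remarks that no example is known distinguishing these relations but offers no general identification. The clean route is to apply what we have just proved to the inverse solution: since $(X,\hat\sigma,\hat\tau)$ is itself a finite non-degenerate bijective solution, the identity $\mathord{\sim}=\mathord{\sim_:}$ holds for it, i.e. $\mathord{\hat\sim}=\mathord{\hat\sim_:}$ (the relation $\sim$ of the inverse solution is $\hat\sim$ by definition, and $\sim_:$ of the inverse solution is $\hat\sim_:$). Combined with $\mathord{\sim_:}=\mathord{\hat\sim_:}$ from \eqref{rel:reg} and the observation $\widehat{\hat\sigma}_x=\sigma_x$, we chain $\mathord{\sim}=\mathord{\sim_:}=\mathord{\hat\sim_:}=\mathord{\hat\sim}$, which is exactly the claim. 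The one point to be careful about is that all three relations really are congruences here, so that the quoted compatibility results and Proposition~\ref{prop:con} apply verbatim to both $(X,\sigma,\tau)$ and its inverse; finiteness is what makes this uniform, both through the finite-order hypothesis in the $\sim_:$ proposition and through the congruence property of $\sim$ for finite bijective non-degenerate solutions.
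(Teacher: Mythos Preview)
Your proposal is correct and follows essentially the same route as the paper: verify the hypotheses of Proposition~\ref{prop:con} (using finiteness for the finite-order condition on $\sigma_x$, $\hat\sigma_x$, and \cite{JPZ19} for the congruence property of~$\sim$), deduce $\mathord{\sim}=\mathord{\sim_:}$, then apply the same argument to the inverse solution and chain through $\mathord{\sim_:}=\mathord{\hat\sim_:}$. The paper's proof is terser and leaves the verification of the hypotheses of Proposition~\ref{prop:con} implicit, but the structure is identical; one minor remark is that the compatibility of~$\sim$ with~$\sigma^{-1}$ already holds for any non-degenerate solution by~\cite{JPZ19}, so finiteness is really only used for the $\sim_:$ side.
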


\begin{proof}
    According to Proposition~\ref{prop:con}, the relations
    $\sim$ and $\sim_:$ are equal. Considering the inverse solution,
    the relations $\hat\sim$ and ${\hat\sim}_:$ are equal.
    But $\sim_:$ and ${\hat\sim}_:$ are equal for any regular solution.
\end{proof}

The following fact was observed by Marco Castelli, although his
proof was different.

\begin{corollary}\label{cor:mutinv}
    Let $(X,\sigma,\tau)$ be a finite non-degenerate bijective
    solution. 
Then the retracts $\mathrm{Ret}(X,\sigma,\tau)$ and $\mathrm{Ret}(X,\hat\sigma,\hat\tau)$ are mutually inverse solutions.

In particular, a finite non-degenerate bijective solution $(X,\sigma,\tau)$ is a multipermutation solution
    of level~$k\geq 0$ if and only if its inverse solution $(X,\hat\sigma,\hat\tau)$
    is multipermutation of~level~$k$.
\end{corollary}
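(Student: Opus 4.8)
The plan is to deduce Corollary~\ref{cor:mutinv} from the preceding corollary, which already identifies the three relations $\sim$, $\sim_:$ and $\hat\sim$ on a finite non-degenerate bijective solution. The key observation is that the retract quotient is built from $\sim$ applied to $(X,\sigma,\tau)$, whereas the retract of the inverse solution $(X,\hat\sigma,\hat\tau)$ is built from the relation $\hat\sim$ applied to $(X,\hat\sigma,\hat\tau)$. Since by the previous corollary $\sim$ and $\hat\sim$ are literally the same equivalence relation on $X$, the two retracts live on the very same quotient set $X^{\sim}=X^{\hat\sim}$. So the first step is to record this identification of underlying sets.

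Next I would check that the solution structures on this common quotient set are inverse to each other. On $X^{\sim}$ the retract of $(X,\sigma,\tau)$ carries the maps induced by $\sigma,\tau$, and the retract of $(X,\hat\sigma,\hat\tau)$ carries the maps induced by $\hat\sigma,\hat\tau$. The point is that the formulas \eqref{rr:1}--\eqref{rr:4} expressing that $r^{-1}=(\hat\sigma,\hat\tau)$ is the inverse of $r=(\sigma,\tau)$ are equational identities in $\sigma,\tau,\hat\sigma,\hat\tau$; applying the quotient map (which is compatible with all of $\sigma,\sigma^{-1},\tau,\tau^{-1}$ on $(X,\sigma,\tau)$, and with $\hat\sigma,\hat\sigma^{-1},\hat\tau,\hat\tau^{-1}$ on the inverse solution, since these too are non-degenerate bijective by the earlier corollary) pushes these identities down to $X^{\sim}$. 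Concretely, $\overline{r}$ built from $\sigma,\tau$ and $\overline{r'}$ built from $\hat\sigma,\hat\tau$ satisfy $\overline{r}\,\overline{r'}=\overline{r'}\,\overline{r}=\id$ on $(X^{\sim})^2$, because each composite is the image under the (coordinatewise) quotient map of the corresponding composite on $X^2$, which is the identity. Hence $\mathrm{Ret}(X,\hat\sigma,\hat\tau)$ is exactly the inverse solution of $\mathrm{Ret}(X,\sigma,\tau)$.

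For the ``in particular'' clause I would iterate. One must know that the inverse of a finite non-degenerate bijective solution is again finite non-degenerate bijective (immediate: finiteness and bijectivity are obvious, non-degeneracy is the earlier Corollary), so the hypotheses persist under both retraction and inversion, and the two operations commute at every stage: $\mathrm{Ret}^k(X,\hat\sigma,\hat\tau)$ is the inverse solution of $\mathrm{Ret}^k(X,\sigma,\tau)$ for all $k\ge 0$, by an easy induction using the first part. A solution has multipermutation level $k$ exactly when $\mathrm{Ret}^k$ has one element and $\mathrm{Ret}^{k-1}$ has more than one; since inverse solutions share the same underlying set at every iteration, $(X,\sigma,\tau)$ and $(X,\hat\sigma,\hat\tau)$ reach a one-point quotient at the same stage, giving equal multipermutation levels.

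I do not expect a serious obstacle here: the whole argument is ``transport of an equational identity through a congruence quotient,'' and all the compatibility and non-degeneracy facts needed have been established just above. The only point requiring a little care is making sure the quotient maps for the two solutions agree as set maps — that is precisely why the identification $\sim\,=\,\hat\sim$ from the previous corollary is invoked first — and, for the iterated statement, that the relevant class of solutions is closed under both $\mathrm{Ret}$ and inversion so the induction goes through.
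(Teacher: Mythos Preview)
Your proposal is correct and follows essentially the same approach as the paper: identify the underlying quotient sets via the equality $\sim\,=\,\hat\sim$ from the previous corollary, then push the identities \eqref{rr:1}--\eqref{rr:4} through the common congruence. You supply considerably more detail than the paper (which dispatches the whole thing in two sentences), in particular spelling out the induction for the ``in particular'' clause, but the underlying argument is the same.
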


\begin{proof}
    The retracts are equal as sets of equivalence classes.
    Identities \eqref{rr:1}--\eqref{rr:4} then follow from the fact that $\sim$ and $\hat\sim$ are equal congruences.
\end{proof}

\section{Diagonal mappings}\label{sec:diag}

In this section we study diagonal mappings of non-degenerate solutions. If we, for a short moment, assume that a solution $(X,\sigma,\tau)$ is bijective than
we can define another pair of diagonal mappings, namely
$\hat T=\hat\tau_x^{-1}(x)$ and $\hat U=\hat\sigma_x^{-1}(x)$.
Using \eqref{rr:1}--\eqref{rr:4} we have
\begin{align}
	&\hat{T}\colon X\to X; \quad \hat{T}(x)=\sigma_{\tau^{-1}_x(x)}(x)=\sigma_{T(x)}(x),
	\quad {\rm and}\label{eq:barU}\\ 
	&\hat{U}\colon X\to X; \quad \hat{U}(x)=\tau_{\sigma^{-1}_x(x)}(x)=\tau_{U(x)}(x).\label{eq:barT}
\end{align}
Of course, for \eqref{eq:barU} we need right non-degeneracy only and for
\eqref{eq:barT} left non-degeneracy.
\begin{theorem}[Rump {\cite[Corollary 2 of Proposition 8]{Rump19}}]
	Let $(X,\sigma,\tau)$ be a regular solution. Then the following conditions are equivalent:
	\begin{itemize}
		\item the mapping $U$ is bijective,
		\item the mapping $\hat U$ is bijective,
		\item the solution $(X,\sigma,\tau)$ is non-degenerate,
		\item the solution $(X,\hat\sigma,\hat\tau)$ is non-degenerate,
	\end{itemize}
	In such a case, we have $U^{-1}=\hat T$ and $\hat U^{-1}=T$.
\end{theorem}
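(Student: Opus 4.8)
The plan is to deduce the four-fold equivalence from two ingredients. The equivalence of the third and fourth items is exactly the corollary proved above stating that a bijective solution is non-degenerate if and only if its inverse is. Granting that, it suffices to prove: \textbf{(a)} if $(X,\sigma,\tau)$ is regular and non-degenerate then $U$ and $\hat U$ are bijective with $U^{-1}=\hat T$ and $\hat U^{-1}=T$; and \textbf{(b)} if $(X,\sigma,\tau)$ is merely regular and $U$ is bijective, then $(X,\sigma,\tau)$ is non-degenerate. The symmetric implication ``$\hat U$ bijective $\Rightarrow$ non-degenerate'' will then follow by applying (b) to the inverse solution, which is regular by Lemma~\ref{lm:invreg} and whose diagonal ``$U$'' is precisely $\hat U$, and transporting non-degeneracy back along the corollary above.

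For \textbf{(a)}: under non-degeneracy the inverse solution is non-degenerate (by that corollary) and regular (by Lemma~\ref{lm:invreg}), so all of $U,T,\hat U,\hat T$ are defined. The core is a one-line cancellation. Applying \eqref{birack:1} to the pair $\bigl(T(x),x\bigr)$ and using $\tau_x(T(x))=x$ together with \eqref{eq:barU} gives $\sigma_{T(x)}\sigma_x=\sigma_{\hat T(x)}\sigma_x$, hence $\sigma_{T(x)}=\sigma_{\hat T(x)}$ after cancelling the bijection $\sigma_x$; evaluating at $x$ gives $\sigma_{\hat T(x)}(x)=\hat T(x)$, i.e.\ $U(\hat T(x))=x$. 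Symmetrically, \eqref{birack:3} applied to $\bigl(U(x),x\bigr)$, using $\sigma_x(U(x))=x$ and \eqref{eq:barT}, gives $\tau_{U(x)}\tau_x=\tau_{\hat U(x)}\tau_x$, hence $\tau_{U(x)}=\tau_{\hat U(x)}$ and therefore $T(\hat U(x))=x$. Thus $U\circ\hat T=T\circ\hat U=\id$. Running the identical argument inside the inverse solution $(X,\hat\sigma,\hat\tau)$, whose diagonals $\hat U,\hat T,U,T$ play the roles of $U,T,\hat U,\hat T$, yields $\hat U\circ T=\hat T\circ U=\id$. Combining the two, $U$ and $\hat T$ are mutually inverse bijections and so are $\hat U$ and $T$.

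For \textbf{(b)}: here we only have left non-degeneracy, bijectivity, and the bijectivity of $U$, and we must produce right non-degeneracy, i.e.\ the invertibility of every $\tau_y$. The same cancellation in \eqref{birack:1}, now applied to the pair $\bigl(x,U(x)\bigr)$ and using $\sigma_x(U(x))=x$ with $\hat U(x)=\tau_{U(x)}(x)$, is still available and gives $\sigma_{U(x)}=\sigma_{\hat U(x)}$ for all $x$, and likewise $\hat\sigma_{U(x)}=\hat\sigma_{\hat U(x)}$ inside the (regular) inverse solution. The remaining task — promoting these relations, the bijectivity of $U$, and the inverse-solution identities \eqref{rr:1}--\eqref{rr:4} to the invertibility of each $\tau_y$ — is the substantive point and is precisely the content of Rump's Proposition~8 of~\cite{Rump19}, established there through the formalism of (regular) $q$-cycle sets. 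I expect this to be the main obstacle, since the soft arguments above do not by themselves recover surjectivity of $\tau_y$. Once $(X,\sigma,\tau)$ is known to be non-degenerate, part (a) supplies $U^{-1}=\hat T$, and, as noted, the implication for $\hat U$ reduces to (b) for $(X,\hat\sigma,\hat\tau)$.
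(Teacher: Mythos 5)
You are right that the heart of the matter is where the burden falls, and it is worth saying up front that the paper itself does not prove this theorem: it is quoted from Rump, the text immediately after it states that Rump's proof uses $q$-cycle sets, and the paper's own contribution is the subsequent Lemma (identities \eqref{eq:sigmaS}--\eqref{eq:A}) and Theorem~\ref{lm:TSbijections}, which re-derive only the implication ``non-degenerate $\Rightarrow$ $U,T$ bijective with $U^{-1}=\hat T$, $T^{-1}=\hat U$'', and do so without assuming bijectivity. Your part (a) covers exactly that implication and is correct: the cancellation in \eqref{birack:1} at the pair $(T(x),x)$ is precisely the paper's derivation of \eqref{eq:A}, giving $\hat T(x)=\sigma_{\hat T(x)}(x)$ and hence $U\hat T=\id$, and the dual cancellation in \eqref{birack:3} gives $T\hat U=\id$. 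Where you genuinely differ is in obtaining the remaining compositions $\hat T U=\id$ and $\hat U T=\id$ by passing to the inverse solution, which is regular by Lemma~\ref{lm:invreg}, non-degenerate by the corollary in Section~2, and has diagonals $\hat U,\hat T$ with hatted diagonals $U,T$ since $\widehat{r^{-1}}=r$. That shortcut is legitimate here because regularity is part of the hypothesis, but it is unavailable in the setting of Theorem~\ref{lm:TSbijections}, which is why the paper instead computes $\hat T U=\id$ directly from \eqref{eq:AB}, \eqref{eq:sigmaS} and \eqref{eq:BS}; your route buys brevity at the cost of needing bijectivity.

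The gap is the one you flag yourself: the implication ``$U$ (or $\hat U$) bijective $\Rightarrow$ the solution is right non-degenerate'' for a merely regular solution is not proved but deferred to Rump's Proposition~8, and neither your soft cancellations (which do yield $\sigma_{U(x)}=\sigma_{\hat U(x)}$ from left non-degeneracy alone) nor the paper's later machinery (which presupposes two-sided non-degeneracy throughout) produces the invertibility of the maps $\tau_y$. So, as a self-contained argument, your proposal establishes the equivalence of the two non-degeneracy conditions, their implication towards bijectivity of $U$ and $\hat U$, and the formulas $U^{-1}=\hat T$, $\hat U^{-1}=T$, but not the converse direction of the four-fold equivalence. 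To be fair, this is exactly the status of the statement in the paper as well: the full equivalence is cited, not re-proved, and your reduction of the ``$\hat U$ bijective'' case to the ``$U$ bijective'' case via the inverse solution is sound, so your treatment is faithful to what can be extracted from the paper's own tools.
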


 Rump's proof uses structural properties of $q$-cycle sets. 
 In this section of our paper, 
 we prove the same result
 without assuming bijectivity,
  using a direct syntactical manipulation only.
  We take  \eqref{eq:barU} and  \eqref{eq:barT} as definitions
  of mappings for any non-degenerate solution (possibly not bijective)
  and we prove $U^{-1}=\hat T$ and $T^{-1}=\hat U$ again.
 Moreover, we also prove that $U$ and $T$ are commuting bijections, not necessarily mutually inverse.

\begin{lemma} 
Let $(X,\sigma,\tau)$ be a non-degenerate solution. Then, for $x\in X$:
\begin{align}
&\sigma_{\hat{U}(x)}=\sigma_{U(x)}\quad {\rm and}\quad \tau_{\hat{T}(x)}=\tau_{T(x)},\label{eq:sigmaS}\\
&\hat{U}(x)=\sigma_{\hat{U}(x)}\hat{U}U(x)\quad {\rm and}\quad \hat{T}(x)=\tau_{\hat{T}(x)}\hat{T}T(x),\label{eq:BS}\\
&\tau_x\hat{T}(x)=\sigma_x\hat{U}(x),
\label{eq:AB} \\
&\hat{T}(x)=\sigma_{\hat{T}(x)}(x)\quad {\rm and}\quad \hat{U}(x)=\tau_{\hat{U}(x)}(x).\label{eq:A}
\end{align}
\end{lemma}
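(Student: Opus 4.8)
The plan is to establish the four identities \eqref{eq:sigmaS}--\eqref{eq:A} by direct syntactic manipulation of the birack relations \eqref{birack:1}--\eqref{birack:3} together with the defining formulas \eqref{eq:barU} and \eqref{eq:barT}, using non-degeneracy to cancel $\sigma_x$'s and $\tau_x$'s whenever needed. I would first set up notation: recall $U(x)=\sigma_x^{-1}(x)$, $T(x)=\tau_x^{-1}(x)$, $\hat U(x)=\tau_{U(x)}(x)$, $\hat T(x)=\sigma_{T(x)}(x)$, so that $\sigma_{U(x)}(\hat?)$-type expressions can be unwound. The natural order is to prove \eqref{eq:A} first (it only needs the one-variable substitutions), then \eqref{eq:AB}, then \eqref{eq:sigmaS}, and finally \eqref{eq:BS}, since the later identities tend to consume the earlier ones; but I would keep the order flexible depending on which birack relation feeds which.

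For \eqref{eq:A}: to show $\hat T(x)=\sigma_{\hat T(x)}(x)$, substitute $x\mapsto x$, $y\mapsto T(x)$ into an appropriate instance of \eqref{birack:1} or rather use \eqref{birack:1} with the arguments chosen so that $\tau_{T(x)}(x)=\tau_x^{-1}... $ wait — more cleanly, apply \eqref{birack:1} in the form $\sigma_a\sigma_b=\sigma_{\sigma_a(b)}\sigma_{\tau_b(a)}$ with $a=x$, $b=T(x)$; since $\tau_{T(x)}(x)$ need not simplify directly, instead I would run \eqref{birack:1} with $b$ chosen as $\tau_x^{-1}(x)=T(x)$ and evaluate both sides at a point, then cancel using left non-degeneracy. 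The symmetric statement $\hat U(x)=\tau_{\hat U(x)}(x)$ comes from \eqref{birack:3} by the same manipulation with roles of $\sigma,\tau$ swapped. For \eqref{eq:AB}, $\tau_x\hat T(x)=\sigma_x\hat U(x)$: here I would use \eqref{birack:2}, which mixes $\sigma$ and $\tau$, specializing $x,y,z$ so that the inner indices collapse to $U(x)$ and $T(x)$; the left side of \eqref{birack:2} is $\tau_{\sigma_{\tau_y(x)}(z)}\sigma_x(y)$ and the right is $\sigma_{\tau_{\sigma_y(z)}(x)}\tau_z(y)$, so choosing $y=U(x)=\sigma_x^{-1}(x)$ makes $\sigma_x(y)=x$, and choosing $z$ appropriately (likely $z=x$ or $z=T$-related) should make the remaining terms land on $\hat T(x)$ and $\hat U(x)$. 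For \eqref{eq:sigmaS}, I expect to feed \eqref{eq:AB} and \eqref{eq:A} back into \eqref{birack:1} and \eqref{birack:3}: from $\tau_x\hat T(x)=\sigma_x\hat U(x)$ and the known square-free-like fixpoint relations \eqref{eq:A}, applying $\sigma$ or $\tau$ to both sides and comparing with a birack identity should yield $\sigma_{\hat U(x)}=\sigma_{U(x)}$ after cancellation. Finally \eqref{eq:BS} should follow by applying \eqref{birack:1} (resp.\ \eqref{birack:3}) to the pair $(\hat U(x),U(x))$ or similar and using \eqref{eq:sigmaS} to rewrite the index $\sigma_{\hat U(x)}$ as $\sigma_{U(x)}$, then recognizing $\sigma_{U(x)}^{-1}(\hat U(x))$ or the analogous composite.

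The main obstacle I anticipate is choosing the right specializations of the three variables in \eqref{birack:2} and tracking which side of each birack relation to evaluate and at which point, so that the cancellations via non-degeneracy actually go through rather than producing a tangle of nested $\sigma^{-1},\tau^{-1}$; this is exactly the kind of bookkeeping where an automated prover (as the authors note they used Prover9) is useful. Once the substitutions are pinned down, each identity is a short chain of rewrites: apply a birack relation, use a defining formula of $U$, $T$, $\hat U$ or $\hat T$, and cancel a bijection. I would present each of the four lines as its own short displayed computation with \stackrel-annotations citing \eqref{birack:1}--\eqref{birack:3} and the definitions, being careful that the previously proved identities among \eqref{eq:sigmaS}--\eqref{eq:A} are only invoked after they are established.
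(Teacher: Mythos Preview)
Your overall strategy---direct syntactic manipulation of the birack relations \eqref{birack:1}--\eqref{birack:3} with well-chosen substitutions, followed by cancellation via non-degeneracy---is exactly what the paper does. The difference lies in the order of derivation and the specific substitutions, and here your plan has a soft spot.

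The paper proves the identities in the order \eqref{eq:sigmaS}, \eqref{eq:BS}, \eqref{eq:AB}, \eqref{eq:A}. Crucially, \eqref{eq:sigmaS} comes \emph{first} and is obtained from \eqref{birack:1} alone in one substitution: rewrite \eqref{birack:1} as $\sigma_{\sigma_x(y)}^{-1}\sigma_x=\sigma_{\tau_y(x)}\sigma_y^{-1}$ and set $y=\sigma_x^{-1}(x)=U(x)$; the left side collapses to the identity and the right gives $\sigma_{\hat U(x)}\sigma_{U(x)}^{-1}=\id$. Your plan instead places \eqref{eq:sigmaS} third, to be deduced from \eqref{eq:A} and \eqref{eq:AB}. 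But from $\hat U(x)=\tau_{\hat U(x)}(x)$ and $\tau_x\hat T(x)=\sigma_x\hat U(x)$ there is no evident route to $\sigma_{\hat U(x)}=\sigma_{U(x)}$; the computation in the paper's proof of \eqref{eq:A} does produce the companion relation $\sigma_{\hat T(x)}=\sigma_{T(x)}$, but that is a different index. So this step of your plan is not substantiated and would likely force you back to a direct argument from \eqref{birack:1} anyway.

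For \eqref{eq:A} your instinct is right but the substitution $a=x$, $b=T(x)$ stalls, as you noticed; the paper instead substitutes $x\mapsto\tau_y^{-1}(x)$ in \eqref{birack:1} and then sets $y=x$, which immediately gives $\sigma_{T(x)}\sigma_x=\sigma_{\hat T(x)}\sigma_x$ and hence \eqref{eq:A} after cancellation. Likewise \eqref{eq:AB} falls out of \eqref{birack:2} via $x\mapsto\tau_y^{-1}(x)$, $z\mapsto\sigma_x^{-1}(x)$, $y\mapsto x$, independently of the other identities. Only \eqref{eq:BS} genuinely consumes a previously proved identity (namely \eqref{eq:sigmaS}), which is why the paper's ordering is the natural one.
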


\begin{proof}
For $x,y,z\in X$ we have:\\
\eqref{eq:sigmaS}:
\begin{align*}
&\sigma_x\sigma_y\stackrel{\eqref{birack:1}}=\sigma_{\sigma_x(y)}\sigma_{\tau_y(x)}\quad \Rightarrow\quad \sigma^{-1}_{\sigma_x(y)}\sigma_x=\sigma_{\tau_y(x)}\sigma^{-1}_y\quad \stackrel{y\mapsto \sigma_x^{-1}(x)}\Rightarrow\\
&\id=\sigma_x^{-1}\sigma_x=\sigma_{\tau_{\sigma_x^{-1}(x)}(x)}\sigma^{-1}_{\sigma_x^{-1}(x)}\quad \Rightarrow\quad \sigma_{\hat{U}(x)}\stackrel{\eqref{eq:barU}}=\sigma_{\tau_{\sigma_x^{-1}(x)}(x)}=\sigma_{\sigma_x^{-1}(x)}=\sigma_{U(x)}
\end{align*}
\eqref{eq:BS}: 
\begin{align*}
&\tau_{\sigma_{\tau_y(x)}(z)}\sigma_x(y)\stackrel{\eqref{birack:2}}=\sigma_{\tau_{\sigma_y(z)}(x)}\tau_z(y)\quad \stackrel{z\mapsto \sigma_y^{-1}(z)}\Rightarrow\quad \tau_{\sigma_{\tau_y(x)}\sigma^{-1}_y(z)}\sigma_x(y)=\sigma_{\tau_z(x)}\tau_{\sigma^{-1}_y(z)}(y)\quad \stackrel{y\mapsto \sigma_x^{-1}(y)}\Rightarrow 
\end{align*}
\begin{align}
&\tau_{\sigma_{\tau_{\sigma_x^{-1}(y)}(x)}\sigma^{-1}_{\sigma_x^{-1}(y)}(z)}(y)=
\sigma_{\tau_z(x)}\tau_{\sigma^{-1}_{\sigma^{-1}_x(y)}(z)}\sigma_x^{-1}(y)\quad 
\stackrel{y\mapsto x}\Rightarrow \label{eq:o1}
\end{align}
\begin{align*}
&\tau_z(x)\stackrel{\eqref{eq:sigmaS}}=\tau_{\sigma_{\hat{U}(x)}\sigma^{-1}_{U(x)}(z)}(x)\stackrel{\eqref{eq:barT}}=\tau_{\sigma_{\tau_{U(x)}(x)}\sigma^{-1}_{U(x)}(z)}(x)\stackrel{\eqref{eq:o1}}=
\sigma_{\tau_z(x)}\tau_{\sigma^{-1}_{U(x)}(z)}U(x)\quad \stackrel{z\mapsto U(x)}\Rightarrow
\end{align*}

\begin{align}
&\tau_{U(x)}=
\sigma_{\tau_z(x)}\tau_{\sigma^{-1}_{U(x)}(z)}U(x)\quad \Rightarrow\label{eq:o2}
\end{align}
\begin{align*}
&\hat{U}(x)\stackrel{\eqref{eq:barT}}=\tau_{U(x)}(x)\stackrel{\eqref{eq:o2}}=
\sigma_{\tau_{{U(x)}}(x)}\tau_{\sigma^{-1}_{U(x)}{U(x)}}U(x)\stackrel{\eqref{eq:barT}}=\sigma_{\hat{U}(x)}\hat{U}U(x)
\end{align*}
\eqref{eq:AB}:
\begin{align*}
&\tau_{\sigma_{\tau_y(x)}(z)}\sigma_x(y)\stackrel{\eqref{birack:2}}=\sigma_{\tau_{\sigma_y(z)}(x)}\tau_z(y)\quad \stackrel{x\mapsto \tau_y^{-1}(x)}\Rightarrow\quad
\tau_{\sigma_x(z)}\sigma_{\tau_y^{-1}(x)}(y)=\sigma_{\tau_{\sigma_y(z)}\tau_y^{-1}(x)}\tau_z(y)\quad \stackrel{z\mapsto \sigma_x^{-1}(x)}\Rightarrow\\
&\tau_{x}\sigma_{\tau_y^{-1}(x)}(y)=\sigma_{\tau_{\sigma_y\sigma_x^{-1}(x)}\tau_y^{-1}(x)}\tau_{\sigma^{-1}_x(x)}(y)\quad \stackrel{y\mapsto x}\Rightarrow\quad \tau_{x}\sigma_{\tau_x^{-1}(x)}(x)=\sigma_{x}\tau_{\sigma^{-1}_x(x)}(x)\quad \Rightarrow\\
&\tau_x\hat{T}(x)\stackrel{\eqref{eq:barU}}=\tau_x\sigma_{\tau^{-1}_x(x)}=\sigma_x\tau_{\sigma^{-1}_x(x)}(x)\stackrel{\eqref{eq:barT}}=\sigma_x\hat{U}(x)
\end{align*}
\eqref{eq:A} 
\begin{align*}
&\sigma_x\sigma_y\stackrel{\eqref{birack:1}}=\sigma_{\sigma_x(y)}\sigma_{\tau_y(x)}\quad \stackrel{x\mapsto \tau_y^{-1}(x)}\Rightarrow\quad 
\sigma_{\tau_y^{-1}(x)}\sigma_y=\sigma_{\sigma_{\tau_y^{-1}(x)}(y)}\sigma_{x}\quad\stackrel{y\mapsto x}\Rightarrow\\
&\sigma_{\tau_x^{-1}(x)}\sigma_x=\sigma_{\sigma_{\tau_x^{-1}(x)}(x)}\sigma_x\quad \Rightarrow\quad \hat{T}(x)\stackrel{\eqref{eq:barU}}=\sigma_{\tau_x^{-1}(x)}(x)=\sigma_{\sigma_{\tau_x^{-1}(x)}(x)}(x)\stackrel{\eqref{eq:barU}}=\sigma_{\hat{T}(x)}(x).
\end{align*}
Similarly we can prove the dual equations.
\end{proof}

\begin{theorem}\label{lm:TSbijections}
Let $(X,\sigma,\tau)$ be a non-degenerate solution. The pairs of mappings $\hat{T}$ and $U$ and $\hat{U}$ and $T$ are mutually inverse. 
\end{theorem}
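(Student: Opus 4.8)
The plan is to read the conclusion off directly from the four displayed identities \eqref{eq:sigmaS}--\eqref{eq:A} of the preceding Lemma, so that only two cancellations using non-degeneracy remain. Concretely, I want to establish the four equalities $U\circ\hat T=\id_X$, $\hat T\circ U=\id_X$, $T\circ\hat U=\id_X$ and $\hat U\circ T=\id_X$.

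One direction is immediate. Rewriting the first half of \eqref{eq:A} as $\hat T(x)=\sigma_{\hat T(x)}(x)$ and applying $\sigma_{\hat T(x)}^{-1}$ gives $U(\hat T(x))=\sigma_{\hat T(x)}^{-1}(\hat T(x))=x$, hence $U\circ\hat T=\id_X$; symmetrically, the second half of \eqref{eq:A}, namely $\hat U(x)=\tau_{\hat U(x)}(x)$, yields $T\circ\hat U=\id_X$. Thus $\hat T$ is a right inverse of $U$ and $\hat U$ a right inverse of $T$.

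For the reverse compositions I would argue as follows. By the definition \eqref{eq:barT} we have $\hat U(x)=\tau_{U(x)}(x)$, while \eqref{eq:BS} combined with \eqref{eq:sigmaS} gives $\hat U(x)=\sigma_{\hat U(x)}\bigl(\hat U(U(x))\bigr)=\sigma_{U(x)}\bigl(\hat U(U(x))\bigr)$; equating the two expressions for $\hat U(x)$ produces $\tau_{U(x)}(x)=\sigma_{U(x)}\bigl(\hat U(U(x))\bigr)$. Now evaluate \eqref{eq:AB} at $U(x)$ in place of $x$, obtaining $\tau_{U(x)}\bigl(\hat T(U(x))\bigr)=\sigma_{U(x)}\bigl(\hat U(U(x))\bigr)$. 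The right-hand sides agree, so cancelling the bijection $\tau_{U(x)}$ --- here I use right non-degeneracy --- gives $\hat T(U(x))=x$, that is $\hat T\circ U=\id_X$, and together with the previous paragraph $\hat T$ and $U$ are mutually inverse. The argument for $\hat U$ and $T$ is the mirror image: from \eqref{eq:barU}, $\hat T(x)=\sigma_{T(x)}(x)$, and the other halves of \eqref{eq:BS}, \eqref{eq:sigmaS} give $\hat T(x)=\tau_{T(x)}\bigl(\hat T(T(x))\bigr)$, whence $\sigma_{T(x)}(x)=\tau_{T(x)}\bigl(\hat T(T(x))\bigr)$; evaluating \eqref{eq:AB} at $T(x)$ and cancelling $\sigma_{T(x)}$ (left non-degeneracy) yields $\hat U(T(x))=x$.

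I do not expect a genuine obstacle: the real work sits in the Lemma, and what remains is bookkeeping. The one point to be careful about is to invoke right non-degeneracy exactly where $\tau_{U(x)}$ is cancelled and left non-degeneracy exactly where $\sigma_{T(x)}$ is cancelled, and to check that the substitutions $x\mapsto U(x)$ and $x\mapsto T(x)$ in \eqref{eq:AB} are the ones that make the two sides line up with the identities coming from \eqref{eq:BS} and the definitions \eqref{eq:barU}, \eqref{eq:barT}.
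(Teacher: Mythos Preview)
Your proof is correct and follows essentially the same approach as the paper: both arguments deduce $U\hat T=\id$ directly from \eqref{eq:A}, and both obtain $\hat T U=\id$ by combining \eqref{eq:AB} at $U(x)$ with \eqref{eq:BS}, \eqref{eq:sigmaS} and \eqref{eq:barT}, the only cosmetic difference being that the paper applies $\tau_{U(x)}^{-1}$ at the outset whereas you cancel $\tau_{U(x)}$ at the end.
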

\begin{proof}
At first we will show that $\hat{T}U=U\hat{T}=\id$. 
Let $x\in X$. Then
\begin{align*}
&U\hat{T}(x)=\sigma^{-1}_{\hat{T}(x)}\hat{T}(x)\stackrel{\eqref{eq:A}}=\sigma^{-1}_{\hat{T}(x)}\sigma_{\hat{T}(x)}(x)=x,\quad {\rm and}\\
&\hat{T}U(x)\stackrel{\eqref{eq:AB}}=\tau^{-1}_{U(x)}\sigma_{U(x)}\hat{U}U(x)\stackrel{\eqref{eq:sigmaS}}=\tau^{-1}_{U(x)}\sigma_{\hat{U}(x)}\hat{U}U(x)\stackrel{\eqref{eq:BS}}=\tau^{-1}_{U(x)}\hat{U}(x)\stackrel{\eqref{eq:barT}}=\tau^{-1}_{U(x)}\tau_{U(x)}(x)=x.
\end{align*}
The proof that $\hat{U}T=T\hat{U}=\id$ goes analogously.
\end{proof}

Colazzo, Jespers, Van Antwerpen and Verwimp showed \cite[Lemma 3.3]{CJAV} that, for a non-degenerate solution, if the diagonal operation $U$ is bijective than the solution is bijective. Thus, by Theorem \ref{lm:TSbijections} we immediately obtain a~positive answer to Question 4.3 \cite{CJV} of Ced\'{o}, Jespers and Verwimp.

\begin{corollary}\label{cor:bijective}
Any non-degenerate solution is bijective.
\end{corollary}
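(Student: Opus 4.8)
The plan is to deduce Corollary~\ref{cor:bijective} directly from Theorem~\ref{lm:TSbijections} together with the quoted result of Colazzo, Jespers, Van Antwerpen and Verwimp \cite[Lemma 3.3]{CJAV}. Let $(X,\sigma,\tau)$ be an arbitrary non-degenerate solution. By Theorem~\ref{lm:TSbijections} the diagonal mapping $U\colon X\to X$, $U(x)=\sigma_x^{-1}(x)$, has a two-sided inverse, namely $\hat T$ (and likewise $T$ is inverted by $\hat U$); in particular $U$ is a bijection of $X$. The cited \cite[Lemma 3.3]{CJAV} says precisely that for a non-degenerate solution, bijectivity of the diagonal $U$ forces bijectivity of $r=(\sigma,\tau)$. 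Combining the two gives that $r$ is a bijection of $X^2$, i.e. the solution is bijective.

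Concretely, I would write: \emph{Let $(X,\sigma,\tau)$ be a non-degenerate solution. By Theorem~\ref{lm:TSbijections}, the diagonal mapping $U$ is bijective (with inverse $\hat T$). Hence, by \cite[Lemma 3.3]{CJAV}, the solution is bijective.} This is essentially a one-line argument once Theorem~\ref{lm:TSbijections} is in hand.

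If one wished to avoid invoking \cite[Lemma 3.3]{CJAV} and give a self-contained proof, the route would be to exhibit the inverse of $r$ explicitly. Since the solution is non-degenerate, each $\sigma_x$ and each $\tau_y$ is invertible, so one can form the candidate map $r'(u,v) = \bigl(\hat\sigma(u,v),\hat\tau(u,v)\bigr)$ guided by the identities \eqref{rr:1}--\eqref{rr:4}: given $(u,v)$ in the image, one wants $x,y$ with $\sigma_x(y)=u$ and $\tau_y(x)=v$, which after substitution reduces to a fixed-point/uniqueness problem solvable using that $U$ (hence $\hat T$) is bijective. One would then check surjectivity and injectivity of $r$ separately. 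The main obstacle in that approach is bookkeeping: translating "$U$ bijective" into "for every $(u,v)$ there is a unique preimage under $r$" requires a careful chain of substitutions analogous to those in the proof of the preceding Lemma, and it is exactly this bookkeeping that \cite[Lemma 3.3]{CJAV} already packages. So the intended proof is the short one: cite Theorem~\ref{lm:TSbijections} for bijectivity of $U$, then cite \cite[Lemma 3.3]{CJAV}. The only "obstacle" is making sure the hypotheses of that lemma (non-degeneracy plus $U$ bijective) are met, which they are.
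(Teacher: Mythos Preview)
Your proposal is correct and matches the paper's own argument essentially verbatim: the paper likewise observes that Theorem~\ref{lm:TSbijections} gives bijectivity of~$U$, and then invokes \cite[Lemma 3.3]{CJAV} to conclude bijectivity of the solution. Your additional discussion of a self-contained alternative is extra commentary beyond what the paper provides, but the core proof is the same.
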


If a solution comes from a skew brace, necessarily $U=\hat U$ \cite[Corollary 33]{SV18}. 
Nevertheless, there are many examples where $U$ and $\hat U$ differ.

\begin{example}\label{ex:Lub}
For a non-empty set $X$ and two bijections  $f,g\colon X\to X$ such that $fg=gf$, the  solution $(X,\sigma,\tau)$ with $\sigma_x=f$ and $\tau_{x}=g$, for each $x\in X$, is called \emph{Lyubashenko solution} (see \cite{Dr90}). Clearly, for such solution, we have: $U=f^{-1}$ and $T=g^{-1}$. Hence, if  $g\neq f^{-1}$ then $\hat U\neq U$.  
\end{example}
\begin{example}\label{ex:der}
Let $(X,\sigma,\tau)$ be a derived solution such that $\sigma_x=\id$, for all $x\in X$. In this case $U=\id$ and $\hat U(x)=\tau_x(x)$. For derived solutions with all $\tau_x=\id$, one obtains the dual situation: $T=\id$ and $\hat T(x)=\sigma_x(x)$.
\end{example}

For all the examples that we have, 
the bijections $U$ and $T$ commute. The next theorem shows that it is not a coincidence.

\begin{theorem}
Let $(X,\sigma,\tau)$ be a non-degenerate solution. Then the mappings $U$ and $T$ commute. 
\end{theorem}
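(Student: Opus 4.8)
The plan is to reduce the claim to the assertion that $U$ commutes with its ``inverse partner'' $\hat U$, and then to read off that assertion from the identities already established in the lemma preceding Theorem~\ref{lm:TSbijections}. By Theorem~\ref{lm:TSbijections} we have $T^{-1}=\hat U$, so $UT=TU$ is equivalent to $U\hat U=\hat U U$: from $U\hat U=\hat U U$ we obtain $UT^{-1}=T^{-1}U$, hence $U=T^{-1}UT$, and therefore $TU=UT$; the converse is the same computation run backwards.

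To prove $U\hat U=\hat U U$, I would evaluate both sides at an arbitrary $x\in X$. Using the definition $U(y)=\sigma_y^{-1}(y)$ with $y=\hat U(x)$,
\[
U\hat U(x)=\sigma_{\hat U(x)}^{-1}\bigl(\hat U(x)\bigr)\stackrel{\eqref{eq:sigmaS}}{=}\sigma_{U(x)}^{-1}\bigl(\hat U(x)\bigr),
\]
where the last step uses $\sigma_{\hat U(x)}=\sigma_{U(x)}$ from \eqref{eq:sigmaS}. On the other hand, the first identity of \eqref{eq:BS} reads $\hat U(x)=\sigma_{\hat U(x)}\bigl(\hat U U(x)\bigr)$, so
\[
\hat U U(x)=\sigma_{\hat U(x)}^{-1}\bigl(\hat U(x)\bigr)\stackrel{\eqref{eq:sigmaS}}{=}\sigma_{U(x)}^{-1}\bigl(\hat U(x)\bigr).
\]
Comparing the two displays yields $U\hat U(x)=\hat U U(x)$ for every $x$, i.e.\ $U\hat U=\hat U U$, and the first paragraph completes the argument. (Symmetrically, the second halves of \eqref{eq:sigmaS} and \eqref{eq:BS} together with $U^{-1}=\hat T$ give $T\hat T=\hat T T$, which leads to the same conclusion.)

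I expect the main obstacle to be conceptual rather than computational: comparing $U\circ T$ and $T\circ U$ head-on does not seem to lead anywhere, and the point is to pair $U$ with $\hat U=T^{-1}$ instead, since it is for this pair that the structural identities \eqref{eq:sigmaS} and \eqref{eq:BS} of the lemma apply directly. Once that reformulation is in place, the verification is a two-line substitution.
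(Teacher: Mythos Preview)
Your proof is correct and follows essentially the same strategy as the paper: reduce $UT=TU$ to $U\hat U=\hat U U$ via $T^{-1}=\hat U$ from Theorem~\ref{lm:TSbijections}, then read off the identity from the preceding lemma together with~\eqref{eq:sigmaS}. The only minor difference is that you invoke \eqref{eq:BS} directly, whereas the paper's computation routes through \eqref{eq:AB} and $U^{-1}=\hat T$; both are one-line verifications of the same equality.
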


\begin{proof}
Let $x\in X$. Then
\begin{align*}
&UT^{-1}U^{-1}(x)=\sigma^{-1}_{T^{-1}U^{-1}(x)}T^{-1}U^{-1}(x)\stackrel{\eqref{eq:sigmaS}}=\\
&\sigma^{-1}_{UU^{-1}(x)}\tau_{UU^{-1}(x)}U^{-1}(x)=
\sigma^{-1}_x\tau_xU^{-1}(x)\stackrel{\eqref{eq:AB}}=
T^{-1}(x).\qedhere
\end{align*}
\end{proof}

The proof turns out to be much easier in the language of $q$-cycle sets:
\begin{theorem}
    Let $(X,\cdot,:)$ be a $q$-cycle set. Then the mappings $U(x)=x\cdot x$ and $\hat U(x)=x:x$ commute. 
\end{theorem}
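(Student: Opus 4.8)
The plan is to read off the commutation directly from the middle $q$-cycle set axiom \eqref{eq:qcs2}, with no auxiliary lemmas at all. First I would translate the claim into the operations $\cdot$ and $:$. Since $U(x)=x\cdot x$ and $\hat U(x)=x:x$, one has $U(\hat U(x))=\hat U(x)\cdot\hat U(x)=(x:x)\cdot(x:x)$ and $\hat U(U(x))=U(x):U(x)=(x\cdot x):(x\cdot x)$. Hence the assertion that $U$ and $\hat U$ commute is literally the single identity $(x:x)\cdot(x:x)=(x\cdot x):(x\cdot x)$, required to hold for all $x\in X$.

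Second, I would obtain this identity by specializing \eqref{eq:qcs2}, that is $(x\cdot y):(x\cdot z)=(y:x)\cdot(y:z)$, to the diagonal case $y=z=x$: the left-hand side becomes $(x\cdot x):(x\cdot x)$ and the right-hand side becomes $(x:x)\cdot(x:x)$. This is exactly the identity isolated in the previous paragraph, so $\hat U\circ U=U\circ\hat U$ and the proof is complete.

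The honest assessment is that there is no real obstacle here: the entire content is noticing that the substitution $y=z=x$ in \eqref{eq:qcs2} already encodes the commutation, which is precisely what makes the $q$-cycle set formulation easier than the solution-language one (where the same fact is chained out of \eqref{eq:sigmaS}, \eqref{eq:BS} and \eqref{eq:AB}, and, in the $U$--$T$ phrasing, also uses $T^{-1}=\hat U$ from Theorem~\ref{lm:TSbijections}). It is worth recording explicitly that this argument uses none of the non-degeneracy hypotheses: invertibility of $y\mapsto x\cdot y$ is not needed and $y\mapsto x:y$ need not be invertible, so the statement holds for every $q$-cycle set, exactly in the generality stated.
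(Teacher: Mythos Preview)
Your argument is correct and is essentially identical to the paper's own proof: both specialize \eqref{eq:qcs2} at $y=z=x$ to obtain $(x:x)\cdot(x:x)=(x\cdot x):(x\cdot x)$, which is precisely $U(\hat U(x))=\hat U(U(x))$.
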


\begin{proof}
$U(\hat U(x))=(x:x)\cdot(x:x)\stackrel{\eqref{eq:qcs2}}=(x\cdot x):(x\cdot x)=\hat U(U(x))$.
\end{proof}

\begin{example}
    Let $X$ be a set and let $e\in X$. We define, for all $x,y\in X$, to be $x\cdot y=y$ and $x:y=e$. Then $(X,\cdot,:)$ is a $q$-cycle set where $U(x)=x$ and $\hat U(x)=e$, for all~$x\in X$. These mappings commute but only one of them is a permutation.
\end{example}

\section{Multipermutation solutions}\label{sec:multi}
In this section we describe the multipermutation level of a solution using equations.

A solution $(X,\sigma,\tau)$ is called \emph{permutational}, if for every $x,y\in X$, $\sigma_x=\sigma_y$  and $\tau_x=\tau_y$. If it is non-degenerate, it has multipermutation level $1$.
The class of non-degenerate permutational solutions evidently coincides with the class of Lyubashenko solutions.
A permutational solution is a \emph{projection} (or \emph{trivial}) solution if for every $x\in X$, $\sigma_x=\tau_x=\id$.

In \cite[Theorem 5.15]{GIC12} Gateva-Ivanova characterized an involutive multipermutation solution by 
one equation and to describe the equation, she introduced an expression called \emph{a tower of actions} \cite[Definition 5.9]{GIC12}. However, for non-involutive solutions we need much more equations of this type. That is why we decided to introduce a new notation.

Let $X$ be a set and let~$\Gamma$ be a set of mappings $\gamma\colon X^2\to X$, $(x,y)\mapsto \gamma_x(y)$. 
Let us define, for each $i\in \mathbb{N}$, for all $x,z_1,\ldots,z_i\in X$
and for all $\gamma^{(1)},\gamma^{(2)},\ldots,\gamma^{(i-1)},\gamma^{(i)}\in \Gamma$,
the following terms:
\begin{align*}
&\Omega_0(x):=x,\\
&\Omega_1(\gamma^{(1)},x,z_1):=\gamma^{(1)}_x(z_1), \\
&\vdots\\
&\Omega_i(\gamma^{(i)},\gamma^{(i-1)},\ldots,\gamma^{(1)},x,z_1,\ldots,z_{i-1},z_i):=\gamma^{(i)}_{\Omega_{i-1}(\gamma^{(i-1)},\ldots,\gamma^{(1)},x,z_1,\ldots,z_{i-1})}(z_i).
\end{align*}

Note that there are some useful relationships among the above terms.

\begin{lemma}
Let $X$ be a set and let~$\Gamma$ be a set of mappings from $X^2$ to $X$.
Then, for every $m>n\in\mathbb{N}$, $x,y,z_1,\ldots,z_m\in X$ and any $\gamma,\gamma^{(1)},\ldots,\gamma^{(m)}\in \Gamma$:
\begin{align}\label{eq:1}
	\Omega_m(\gamma^{(m)},\ldots,\gamma^{(1)},x,z_1,\ldots,z_m)&=
	\Omega_{m-1}(\gamma^{(m)},\ldots,\gamma^{(2)},\Omega_1(\gamma^{(1)},x,z_1),z_2,\ldots,z_m),\\
	\begin{split}
	\label{eq:4}
	\Omega_m(\gamma^{(m)},\ldots,\gamma^{(1)},x,z_1,\ldots,z_m)&=
	\Omega_{m-n}(\gamma^{(m)},\ldots,\gamma^{(n+1)},\\
	&\strut\kern 1cm \Omega_n(\gamma^{(n)},\ldots,\gamma^{(1)},x,z_1,\ldots,z_n),z_{n+1},\ldots,z_m),
	\end{split}
\end{align}
\end{lemma}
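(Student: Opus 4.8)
The plan is to prove both identities by induction, deducing \eqref{eq:4} from the special case \eqref{eq:1}; the whole argument is bookkeeping of the (long) tuples, with no genuine obstacle.

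\textbf{Identity \eqref{eq:1}.} I would argue by induction on $m\geq 1$. For $m=1$ the right-hand side is $\Omega_0(\Omega_1(\gamma^{(1)},x,z_1))$, which equals $\Omega_1(\gamma^{(1)},x,z_1)$ since $\Omega_0$ returns its element argument unchanged; so the two sides agree. For $m>1$, unfold the left-hand side once via the recursive definition of $\Omega_m$,
\[
\Omega_m(\gamma^{(m)},\ldots,\gamma^{(1)},x,z_1,\ldots,z_m)=\gamma^{(m)}_{\Omega_{m-1}(\gamma^{(m-1)},\ldots,\gamma^{(1)},x,z_1,\ldots,z_{m-1})}(z_m),
\]
apply the induction hypothesis to the inner $\Omega_{m-1}$ term, moving its base point to $\Omega_1(\gamma^{(1)},x,z_1)$, and then re-fold using the definition of $\Omega_{m-1}$ on the other side. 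Both expressions are $\gamma^{(m)}$ evaluated at $z_m$ with the same subscript, hence they coincide.

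\textbf{Identity \eqref{eq:4}.} With \eqref{eq:1} available, I would induct on $n$, uniformly in all admissible $m>n$; the case $n=1$ is exactly \eqref{eq:1} (and $n=0$, if $0\in\mathbb N$, is trivial). For the step from $n$ to $n+1$, first invoke \eqref{eq:4} for $n$ to rewrite the left-hand side as
\[
\Omega_{m-n}\bigl(\gamma^{(m)},\ldots,\gamma^{(n+1)},\Omega_n(\gamma^{(n)},\ldots,\gamma^{(1)},x,z_1,\ldots,z_n),z_{n+1},\ldots,z_m\bigr),
\]
and then apply \eqref{eq:1} to this $\Omega_{m-n}$ term, peeling off its innermost action $\gamma^{(n+1)}$; this replaces the base point by $\Omega_1(\gamma^{(n+1)},\Omega_n(\gamma^{(n)},\ldots,\gamma^{(1)},x,z_1,\ldots,z_n),z_{n+1})$. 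By the definition of $\Omega_{n+1}$ the latter equals $\Omega_{n+1}(\gamma^{(n+1)},\gamma^{(n)},\ldots,\gamma^{(1)},x,z_1,\ldots,z_{n+1})$, and what remains is precisely the right-hand side of \eqref{eq:4} for $n+1$.

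\textbf{Main point of care.} There is no serious difficulty: both inductions are mechanical once the argument lists are written out correctly. The only thing to watch is the degenerate case $m=1$ in \eqref{eq:1}, where the outer list of mappings and the tail $z_2,\ldots,z_m$ are both empty and the identity collapses to the definition $\Omega_0(x)=x$; every other instance follows the pattern above.
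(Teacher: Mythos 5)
Your proof is correct and takes essentially the same route as the paper: \eqref{eq:1} by induction on $m$ via the recursive definition, and \eqref{eq:4} by induction on $n$ using \eqref{eq:1}. The only (cosmetic) difference is in the step for \eqref{eq:4}: you apply the induction hypothesis to the whole $\Omega_m$ and then peel off $\gamma^{(n+1)}$ with \eqref{eq:1}, absorbing it into $\Omega_{n+1}$ by definition, whereas the paper first peels $\gamma^{(1)}$ via \eqref{eq:1}, applies the induction hypothesis, and refolds with a second use of \eqref{eq:1}; both bookkeeping orders are fine.
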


\begin{proof}
\eqref{eq:1}:
We go by an induction on~$m$. Clearly, for $m=1$, we have:
\begin{align*}
	\Omega_1(\gamma^{(1)},x,z_1)=\gamma_x^{(1)}(z_1)=\Omega_0(\Omega_1(\gamma^{(1)},x,z_1)).
\end{align*}
Let us assume, the induction hypothesis is true for some $m\in \mathbb{N}$. Then, by the induction hypothesis, we obtain:
\begin{multline*}
	\Omega_{m+1}(\gamma^{(m+1)},\gamma^{(m)},\ldots,\gamma^{(2)},\gamma^{(1)}, x,z_1,\dots,z_m,z_{m+1})=\gamma^{(m+1)}_{\Omega_m(\gamma^{(m)},\ldots,\gamma^{(1)},x,z_1,\dots,z_m)}(z_{m+1})\stackrel{\text{ind.}}{=}\\
	\gamma^{(m+1)}_{\Omega_{m-1}(\gamma^{(m)},\ldots,\gamma^{(2)},\Omega_1(\gamma^{(1)},x,z_1),z_2,\dots,z_m)}(z_{m+1})=\\
	\Omega_{m}(\gamma^{(m+1)},\gamma^{(m)},\ldots,\gamma^{(2)},\Omega_1(\gamma^{(1)},x,z_1),z_2,\dots,z_m,z_{m+1}).
\end{multline*}
\eqref{eq:4}:
We go by an induction on~$n$. For $n=0$, the claim is true since $\Omega_0(x)=x$. Let us suppose that the induction hypothesis is valid for some $n\geq 0$.
Then 
\begin{multline*}
	\Omega_m(\gamma^{(m)},\ldots,\gamma^{(1)},x,z_1,\ldots,z_m)
	\stackrel{\eqref{eq:1}}{=} \Omega_{m-1}(\gamma^{(m)},\ldots,\gamma^{(2)},\Omega_1(\gamma^{(1)},x,z_1),z_2,\ldots,z_m)\stackrel{\text{ind.}}{=}\\
	\Omega_{m-n-1}(\gamma^{(m)},\ldots,\gamma^{(n+2)}, \Omega_n(\gamma^{(n+1)},\ldots,\gamma^{(2)},\Omega_1(\gamma_1,x,z_1),z_2,\ldots,z_{n+1}),z_{n+2},\ldots,z_m)\stackrel{\eqref{eq:1}}{=}
	\\
	\Omega_{m-n-1}(\gamma^{(m)},\ldots,\gamma^{(n+2)}, \Omega_{n+1}(\gamma^{(n+1)},\ldots,\gamma^{(1)},x,z_1,\ldots,z_{n+1}),z_{n+2},\ldots,z_m) \qedhere
\end{multline*}
\end{proof}

\begin{lemma}\label{lm:u1}
Let $X$ be a set and let~$\Gamma$ be a set of mappings from $X^2$ to $X$ closed on inverses, meaning that, for each $\gamma\in\Gamma$, there exists $\gamma^{-1}\in\Gamma$ such that $\gamma^{-1}_x\gamma_x(y)=y$, for all $x,y\in X$.
 Then, for every $x,y,z_1,\ldots,z_m\in X$ and any $\gamma,\gamma^{(1)},\ldots,\gamma^{(m)}\in \Gamma$:
\begin{align}\label{eq:short1}
\Omega_m(\gamma,\ldots,\gamma,x,\Omega_1(\gamma^{-1},x,x),\dots,&\,\Omega_1(\gamma^{-1},x,x))=x,\\
\label{eq:short2}
\Omega_m(\gamma^{(m-1)},\ldots,\gamma^{(1)},\gamma,x,\Omega_1(\gamma^{-1},x,z_1),z_2,\ldots,z_{m})&=\Omega_{m-1}(\gamma^{(m-1)},\ldots,\gamma^{(1)},z_1,z_2,\dots,z_m).
\end{align}
\end{lemma}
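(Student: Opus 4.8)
The plan is to derive both identities from the single observation that, for each fixed $x$, the maps $\gamma_x$ and $\gamma^{-1}_x$ are mutual inverses, using the ``associativity'' relation \eqref{eq:1} as the only structural input about the terms $\Omega_i$.

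For \eqref{eq:short1} I would argue by induction on $m$. Write $u:=\Omega_1(\gamma^{-1},x,x)=\gamma^{-1}_x(x)$, so that $\Omega_1(\gamma,x,u)=\gamma_x(\gamma^{-1}_x(x))=x$. One then shows $\Omega_k(\gamma,\dots,\gamma,x,u,\dots,u)=x$ for all $k\ge 0$, which for $k=m$ is precisely \eqref{eq:short1}. The base case $k=0$ is $\Omega_0(x)=x$, and the inductive step merely unfolds the outermost layer,
\[
\Omega_{k+1}(\gamma,\dots,\gamma,x,u,\dots,u)
=\gamma_{\Omega_k(\gamma,\dots,\gamma,x,u,\dots,u)}(u)
=\gamma_x(u)=x,
\]
where the induction hypothesis is applied inside the subscript.

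For \eqref{eq:short2} the key step is a single application of \eqref{eq:1} to the left-hand side, reading the innermost map as $\gamma$ and the first variable slot as $\Omega_1(\gamma^{-1},x,z_1)$:
\[
\Omega_m(\gamma^{(m-1)},\dots,\gamma^{(1)},\gamma,x,\Omega_1(\gamma^{-1},x,z_1),z_2,\dots,z_m)
=\Omega_{m-1}(\gamma^{(m-1)},\dots,\gamma^{(1)},\Omega_1(\gamma,x,\Omega_1(\gamma^{-1},x,z_1)),z_2,\dots,z_m).
\]
Since $\Omega_1(\gamma,x,\Omega_1(\gamma^{-1},x,z_1))=\gamma_x(\gamma^{-1}_x(z_1))=z_1$, the right-hand side collapses to $\Omega_{m-1}(\gamma^{(m-1)},\dots,\gamma^{(1)},z_1,z_2,\dots,z_m)$, which is \eqref{eq:short2}.

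I do not anticipate a genuine obstacle here; the only thing that needs care is index bookkeeping --- one should check, before and after invoking \eqref{eq:1}, that the displayed list of maps has length equal to the subscript of the surrounding $\Omega$, and recall that $\Gamma$ being closed under inverses is exactly what makes the term $\Omega_1(\gamma^{-1},x,z_1)$ legitimate. As a consistency check one may also note that \eqref{eq:short1} can be recovered by iterating \eqref{eq:short2} with all $\gamma^{(i)}=\gamma$ and $z_1=\dots=z_m=x$, peeling off one layer at a time down to $\Omega_0(x)=x$; I would nonetheless keep the self-contained induction given above.
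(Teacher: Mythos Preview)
Your proof is correct. For \eqref{eq:short1} you do exactly what the paper does: induct on the depth and use the hypothesis inside the subscript to collapse the outer layer to $\gamma_x\gamma_x^{-1}(x)=x$ (the paper takes base case $m=1$, you take $k=0$, but that is immaterial). For \eqref{eq:short2} you take a different route: the paper proves it by a separate induction on~$m$, unfolding the outermost $\gamma^{(m)}$ and applying the inductive hypothesis in the subscript, whereas you instead invoke \eqref{eq:1} once to absorb the innermost $\gamma$ into an $\Omega_1$, which then cancels against $\gamma^{-1}$. Your argument is shorter and avoids the second induction entirely; the paper's version has the mild advantage of being self-contained (it never appeals to the earlier lemma containing \eqref{eq:1}), but since \eqref{eq:1} is already established this is not a real cost. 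Your closing remark that \eqref{eq:short1} follows by iterating \eqref{eq:short2} is also correct and is not noted in the paper.
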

\begin{proof}
The proof will go by an induction with respect to $m$. \\
\eqref{eq:short1}: 
For $m=1$, we have:
\begin{align*}
\Omega_1(\gamma,x,\Omega_1(\gamma^{-1},x,x))=\gamma_x(\Omega_1(\gamma^{-1},x,x))=\gamma_x\gamma_x^{-1}(x)=x.
\end{align*}
Let us assume, the induction hypothesis is true for some $m\in \mathbb{N}$. Then, by the induction hypothesis, we obtain:
\begin{align*}
&\Omega_{m+1}(\gamma,\ldots,\gamma,x,\Omega_1(\gamma^{-1},x,x),\dots,\Omega_1(\gamma^{-1},x,x))=\\
&\gamma_{\Omega_{m}(\gamma,\ldots,\gamma,x,\Omega_1(\gamma^{-1},x,x),\dots,\Omega_1(\gamma^{-1},x,x))}(\Omega_1(\gamma^{-1},x,x))\stackrel{\text{ind.}}{=}\gamma_x\gamma^{-1}_x(x)=x.
\end{align*}
\eqref{eq:short2}: 
Clearly, for $m=1$, we have:
\begin{align*}
\Omega_1(\gamma,x,\Omega_1(\gamma^{-1},x,z_1))=\gamma_x(\Omega_1(\gamma^{-1},x,z_1))=\gamma_x\gamma_x^{-1}(z_1)=z_1=\Omega_0(z_1).
\end{align*}
Let us assume, the induction hypothesis is true, for some $m\in \mathbb{N}$. Then 
we obtain:
\begin{align*}
&\Omega_{m+1}(\gamma^{(m)},\ldots,\gamma^{(1)},\gamma,x,\Omega_1(\gamma^{-1},x,z_1),z_2,\dots,z_m,z_{m+1})=\\\
&\gamma^{(m)}_{\Omega_m(\gamma^{(m-1)},\ldots,\gamma^{(1)},\gamma,x,\Omega_1(\gamma^{-1},x,z_1),z_2,\dots,z_m)}(z_{m+1})\stackrel{\text{ind.}}{=}\\
&\gamma^{(m)}_{\Omega_{m-1}(\gamma^{(m-1)},\ldots,\gamma^{(1)},z_1,z_2,\dots,z_m)}(z_{m+1})=
\Omega_{m}(\gamma^{(m)},\ldots,\gamma^{(1)},z_1,z_2,\dots,z_m,z_{m+1}).
\qedhere
\end{align*}
\end{proof}

The equational characterization by Gateva-Ivanova sounds, in our terms, like this:

\begin{proposition}\label{prop:per}\cite[Proposition 4.7]{GI18}
A non-degenerate involutive solution $(X,\sigma,\tau)$ with at least two elements is a multipermutation solution of level at most $1\leq k$ if and only if the solution satisfies, for all $x,y,z_1,\ldots,z_k\in X$,
the following condition:
\begin{equation}\label{eq:multi}
\Omega_k(\sigma,\ldots,\sigma, x,z_1,\ldots,z_k)=\Omega_k(\sigma,\ldots,\sigma, y,z_1,\ldots,z_k).
\end{equation}
\end{proposition}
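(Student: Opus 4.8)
The plan is to tie the iterated retraction directly to the terms $\Omega_j(\sigma,\ldots,\sigma,-,\ldots)$. Write $[\,\cdot\,]_j\colon X\to \mathrm{Ret}^j(X,\sigma,\tau)$ for the composite of the first $j$ retraction quotient maps, so $[\,\cdot\,]_0=\id_X$ and $[\,\cdot\,]_{j+1}$ is $[\,\cdot\,]_j$ followed by the quotient map of $\mathrm{Ret}^j(X,\sigma,\tau)$. Each $[\,\cdot\,]_j$ is onto and $X\neq\emptyset$, so $\mathrm{Ret}^k(X,\sigma,\tau)$ is a one-element solution if and only if $[a]_k=[b]_k$ for all $a,b\in X$; by the definition of the multipermutation level, the solution has level at most~$k$ if and only if $[a]_k=[b]_k$ for all $a,b\in X$. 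Hence it suffices to prove, for every $j\ge 0$ and all $a,b\in X$, the equivalence
\begin{equation}\label{eq:plan-claim}
[a]_j=[b]_j \quad\Longleftrightarrow\quad \Omega_j(\sigma,\ldots,\sigma,a,z_1,\ldots,z_j)=\Omega_j(\sigma,\ldots,\sigma,b,z_1,\ldots,z_j)\ \text{for all }z_1,\ldots,z_j\in X,
\end{equation}
and then to read off the proposition by setting $j=k$ and quantifying over $a,b$, since the right-hand side of~\eqref{eq:plan-claim} for $j=k$ is exactly condition~\eqref{eq:multi}.

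I would prove~\eqref{eq:plan-claim} by induction on~$j$; the case $j=0$ is immediate because $\Omega_0(x)=x$. For the step, first iterate the defining identity $\sigma_{x^\sim}(y^\sim)=\sigma_x(y)^\sim$ of the retraction to obtain $\sigma_{[a]_j}([c]_j)=[\sigma_a(c)]_j$ for all $a,c\in X$; together with surjectivity of $[\,\cdot\,]_j$ this shows that $[a]_{j+1}=[b]_{j+1}$ is equivalent to the retract relation~\eqref{eq:retract} of $\mathrm{Ret}^j(X,\sigma,\tau)$ holding between $[a]_j$ and $[b]_j$, i.e. to $\sigma_{[a]_j}=\sigma_{[b]_j}$ \emph{and} $\tau_{[a]_j}=\tau_{[b]_j}$. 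Now $\mathrm{Ret}^j(X,\sigma,\tau)$ is again non-degenerate and, being a quotient of an involutive solution by a congruence, involutive, so the Etingof--Schedler--Soloviev fact applies to it: on an involutive non-degenerate solution $\sigma_u=\sigma_v$ already forces $\tau_u=\tau_v$ (this follows from the compatibility of the relation~\eqref{rel:sim} with $\sigma$ recalled in Section~2, together with involutivity in the form $\tau_v(u)=\sigma_{\sigma_u(v)}^{-1}(u)$). Thus $[a]_{j+1}=[b]_{j+1}$ is equivalent to $\sigma_{[a]_j}=\sigma_{[b]_j}$, which — written pointwise and using $\sigma_{[a]_j}([c]_j)=[\sigma_a(c)]_j$ and surjectivity once more — is equivalent to: $[\sigma_a(c)]_j=[\sigma_b(c)]_j$ for all $c\in X$.

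To finish the step, apply the induction hypothesis to the pair $\sigma_a(c),\sigma_b(c)$: the last condition is equivalent to
\[
\Omega_j(\sigma,\ldots,\sigma,\sigma_a(c),z_1,\ldots,z_j)=\Omega_j(\sigma,\ldots,\sigma,\sigma_b(c),z_1,\ldots,z_j)\quad\text{for all }c,z_1,\ldots,z_j\in X.
\]
Since $\sigma_x(c)=\Omega_1(\sigma,x,c)$, relation~\eqref{eq:1} with every entry equal to~$\sigma$ gives $\Omega_j(\sigma,\ldots,\sigma,\sigma_x(c),z_1,\ldots,z_j)=\Omega_{j+1}(\sigma,\ldots,\sigma,x,c,z_1,\ldots,z_j)$, so after renaming $(c,z_1,\ldots,z_j)$ as $(z_1,\ldots,z_{j+1})$ this is precisely the right-hand side of~\eqref{eq:plan-claim} for $j+1$. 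This closes the induction, and the proposition follows.

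I expect the genuine obstacle to lie in the second paragraph: one must check that the retraction tower stays non-degenerate \emph{and} involutive at every level, because it is exactly the involutive case of the Etingof--Schedler--Soloviev observation ($\sigma_u=\sigma_v\Rightarrow\tau_u=\tau_v$) that allows the single $\sigma$-only family~\eqref{eq:multi} to detect the full two-sided retract relation~\eqref{eq:retract}. For a general non-degenerate solution the $\tau$-component of the retract is not controlled by the terms $\Omega_j$ in $\sigma$ alone, which is why the following sections need a richer supply of terms and identities.
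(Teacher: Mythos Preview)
Your argument is correct and follows essentially the same inductive scheme that the paper uses for the more general Theorem~\ref{thm:multieq}; the paper does not give its own proof of Proposition~\ref{prop:per} but merely cites it from~\cite{GI18}, and your proof is precisely the specialization of the Theorem~\ref{thm:multieq} argument to the involutive case, with the extra observation that $\sigma_u=\sigma_v\Rightarrow\tau_u=\tau_v$ on each retract collapsing the $2^k$ identities to the single $\sigma$-only one.
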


Similar characterization exists for non-involutive solutions as well but Equation~\eqref{eq:multi}  is replaced by $4^k$ or $2^k$ ones.

\begin{theorem}\label{thm:multieq}
Let $(X,\sigma,\tau)$ be a non-degenerate solution. The following conditions are equivalent:
\begin{enumerate}
\item [(i)] 
$(X,\sigma,\tau)$ is a multipermutation solution of level at most $k\geq 0$;
\item [(ii)] the equations: 
\begin{equation}\label{eq:kper}
\Omega_k(\gamma^{(k)},\ldots,\gamma^{(1)},x,z_1,\ldots,z_k)=\Omega_k(\gamma^{(k)},\ldots,\gamma^{(1)},y,z_1,\ldots,z_k),
\end{equation}
are satisfied for all $x,y,z_1,\ldots,z_k\in X$ and any $k$-tuples $(\gamma^{(1)},\ldots,\gamma^{(k)})\in \{\sigma,\sigma^{-1},\tau,\tau^{-1}\}^k$;
\item [(iii)] the equations: 
\begin{equation}\label{eq:kper1}
\Omega_k(\gamma^{(k)},\ldots,\gamma^{(1)},x,z_1,\ldots,z_k)=\Omega_k(\gamma^{(k)},\ldots,\gamma^{(1)},y,z_1,\ldots,z_k),
\end{equation}
are satisfied for all $x,y,z_1,\ldots,z_k\in X$ and any $k$-tuples $(\gamma^{(1)},\ldots,\gamma^{(k)})\in \{\sigma,\tau\}^k$.
\end{enumerate}

\end{theorem}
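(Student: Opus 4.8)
The plan is to argue by induction on $k$, reducing level $k$ to level $k-1$ through the retraction $\mathrm{Ret}(X,\sigma,\tau)$ of Definition~\ref{ret}. The implication (ii)$\Rightarrow$(iii) is automatic, since the equations~\eqref{eq:kper1} form a subfamily of~\eqref{eq:kper}, so it would be enough to prove (i)$\Rightarrow$(ii) and (iii)$\Rightarrow$(i); in practice I would prove all three conditions equivalent at once by the same induction. The base case $k=0$ is immediate: by Definition~\ref{ret}, $(X,\sigma,\tau)$ has multipermutation level at most $0$ precisely when $X$ has a single element, while each of~\eqref{eq:kper},~\eqref{eq:kper1} collapses to $\Omega_0(x)=\Omega_0(y)$, that is, $x=y$ for all $x,y\in X$ --- the same condition.

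The core of the proof is a reduction to be established for each $k\geq 1$: \emph{$(X,\sigma,\tau)$ satisfies~\eqref{eq:kper} if and only if $\mathrm{Ret}(X,\sigma,\tau)$ satisfies the corresponding family of equations with parameter $k-1$}, and likewise with~\eqref{eq:kper1} in place of~\eqref{eq:kper}. Three ingredients feed into it. First, applying~\eqref{eq:4} with $n=k-1$ peels off the outermost operation: $\Omega_k(\gamma^{(k)},\dots,\gamma^{(1)},x,z_1,\dots,z_k)=\gamma^{(k)}_{a}(z_k)$ with $a=\Omega_{k-1}(\gamma^{(k-1)},\dots,\gamma^{(1)},x,z_1,\dots,z_{k-1})$. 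Second, straight from~\eqref{eq:retract}, for $a,b\in X$ one has $a\sim b$ if and only if $\gamma_a=\gamma_b$ for all $\gamma\in\{\sigma,\tau\}$, equivalently for all $\gamma\in\{\sigma,\sigma^{-1},\tau,\tau^{-1}\}$. Third, an easy induction on the length of an $\Omega$-term --- using that the operations of $\mathrm{Ret}(X,\sigma,\tau)$ are the quotient operations and that $\sim$ is a congruence for $\sigma,\sigma^{-1},\tau,\tau^{-1}$ by~\cite{JPZ19} --- shows $\Omega_{k-1}(\gamma^{(k-1)},\dots,\gamma^{(1)},x^{\sim},z_1^{\sim},\dots,z_{k-1}^{\sim})=a^{\sim}$ in $\mathrm{Ret}(X,\sigma,\tau)$. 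Putting these together, the equation~\eqref{eq:kper} for fixed inner operations $\gamma^{(1)},\dots,\gamma^{(k-1)}$ and the outermost operation ranging over $\{\sigma,\tau\}$ (which is already the full requirement, the other two symbols being automatic once $a\sim b$) holds for all values of the variables exactly when $a\sim b$ for the corresponding $a,b$ and all variables, which --- translating along the quotient map --- is exactly the level-$(k-1)$ equation of~\eqref{eq:kper1} holding on $\mathrm{Ret}(X,\sigma,\tau)$; letting the inner operations range over $\{\sigma,\sigma^{-1},\tau,\tau^{-1}\}$ instead gives the version for~\eqref{eq:kper}.

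With the reduction available, the induction closes immediately. For $k\geq 1$, $(X,\sigma,\tau)$ has multipermutation level at most $k$ iff $\mathrm{Ret}(X,\sigma,\tau)$ has level at most $k-1$; since $\mathrm{Ret}(X,\sigma,\tau)$ is again a non-degenerate solution, the induction hypothesis applied to it makes this equivalent to $\mathrm{Ret}(X,\sigma,\tau)$ satisfying~\eqref{eq:kper} (respectively~\eqref{eq:kper1}) at level $k-1$, and by the reduction this is equivalent to $(X,\sigma,\tau)$ satisfying~\eqref{eq:kper} (respectively~\eqref{eq:kper1}) at level $k$. Hence (i)$\Leftrightarrow$(ii) and (i)$\Leftrightarrow$(iii), so all three conditions are equivalent.

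The one point that needs care, and which I regard as the main (modest) obstacle, is choosing the right operation to strip: one must isolate the \emph{outermost} operation via~\eqref{eq:4}, because $\gamma^{(k)}_a$ depends on $a$ only through its $\sim$-class, whereas stripping the \emph{innermost} operation via~\eqref{eq:1} would only prove the two sides of~\eqref{eq:kper} to be $\sim$-related --- strictly weaker than the required equality. Everything else is bookkeeping: the operation symbols in $\Omega_k$ are listed in reverse order; the induction must be phrased uniformly over all non-degenerate solutions so that it can be invoked for $\mathrm{Ret}(X,\sigma,\tau)$; and one should record the harmless asymmetry between~(ii) and~(iii), namely that the inverse symbols $\sigma^{-1},\tau^{-1}$ appearing in~(ii) contribute nothing new, since the relation $\sim$ driving the reduction is defined from $\sigma,\tau$ alone.
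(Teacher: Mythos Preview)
Your proposal is correct and follows essentially the same approach as the paper: induction on~$k$, with the inductive step carried out by peeling off the outermost operation~$\gamma^{(k)}$ and using that $a\sim b$ if and only if $\gamma_a=\gamma_b$ for all $\gamma\in\{\sigma,\tau\}$ (equivalently, for all $\gamma\in\{\sigma,\sigma^{-1},\tau,\tau^{-1}\}$), so that the level-$k$ equations on~$X$ correspond exactly to the level-$(k-1)$ equations on $\mathrm{Ret}(X,\sigma,\tau)$. The paper's proof is organized identically, treating (i)$\Leftrightarrow$(ii) and (i)$\Leftrightarrow$(iii) by the same induction and noting at the end that the only difference between the two is whether one quantifies~$\gamma$ over two symbols or four.
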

\begin{proof}
The proof goes by induction with respect on $k$. \\
$(i)\Leftrightarrow (ii)$:\qquad
Clearly, a solution is of multipermutation level $0$ if and only if $|X|=1$ or equivalently, for all $x,y\in X$: $$\Omega_0(x)=x=y=\Omega_0(y).$$

Now let us assume, the induction hypothesis is true for some $k\in \mathbb{N}$. Furthermore, a solution is of multipermutation level at most $k+1$ if and only if the solution ${\rm Ret}(X,\sigma,\tau)$ is of multipermutation level at most $k$. By the induction hypothesis, it simply means that 
\begin{align*}
&\Omega_k(\gamma^{(k)},\ldots,\gamma^{(1)},x,z_1,\ldots,z_k)/_\approx=\Omega_k(\gamma^{(k)},\ldots,\gamma^{(1)},x/_\approx,z_1/_\approx,\ldots, z_k/_\approx)=\\
&\Omega_k(\gamma^{(k)},\ldots,\gamma^{(1)},y/_\approx,z_1/_\approx,\ldots,z_k/_\approx)=\Omega_k(\gamma^{(k)},\ldots,\gamma^{(1)},y,z_1,\ldots,z_k)/_\approx
\end{align*}
holds for all $x/_\approx,y/_\approx,z_1/_\approx,\ldots,z_k/_\approx\in {\rm Ret}(X)$ and any $k$-tuples $(\gamma^{(1)},\ldots,\gamma^{(k)})\in \{\sigma,\sigma^{-1},\tau,\tau^{-1}\}^k$. Hence, for all $x,y,z_1,\ldots,z_k\in X$ and any $k$-tuples $(\gamma^{(1)},\ldots,\gamma^{(k)})\in \{\sigma,\sigma^{-1},\tau,\tau^{-1}\}^k$ we obtain the following 
equality in the solution $(X, \sigma,\tau)$:
\begin{equation*}
\Omega_k(\gamma^{(k)},\ldots,\gamma^{(1)},x,z_1,\ldots,z_k)\approx \Omega_k(\gamma^{(k)},\ldots,\gamma^{(1)},y,z_1,\ldots,z_k).
\end{equation*}
This immediately implies that, for all $x,y,z_1,\ldots,z_k,z\in X$ and any $(k+1)$-tuples $(\gamma^{(1)},\ldots,\gamma^{(k)},\gamma)\in \{\sigma,\sigma^{-1},\tau,\tau^{-1}\}^{k+1}$, having multipermutation level at most~$k$ is equivalent to
\begin{align*}
&\Omega_{k+1}(\gamma,\gamma^{(k)},\ldots,\gamma^{(1)},x,z_1,\ldots,z_k,z)=\gamma_{\Omega_k(\gamma^{(k)},\ldots,\gamma^{(1)},x,z_1,\ldots,z_k)}(z)=\\
&\gamma_{\Omega_k(\gamma^{(k)},\ldots,\gamma^{(1)},y,z_1,\ldots,z_k)}(z)=\Omega_{k+1}(\gamma,\gamma^{(k)},\ldots,\gamma^{(1)},y,z_1,\ldots,z_k,z)
\end{align*}
since $a\approx b$ if and only if $\gamma_a=\gamma_b$, for all $\gamma\in\{\sigma,\sigma^{-1},\tau,\tau^{-1}\}$.

$(i)\Leftrightarrow (iii)$:\qquad The proof is nearly the same as for $(i)\Leftrightarrow(ii)$. The only difference comes at the last line where we now claim $a\approx b$ if and only if $\gamma_a=\gamma_b$, for all $\gamma\in\{\sigma,\tau\}$.
\end{proof}

\begin{corollary}\label{cor:mpl_ss}
    Let $(X,\sigma,\tau)$ be a finite regular solution. Then $(X,\sigma,\tau)$ is a multipermutation solution
    of level at most~$k\geq 0$ if and only if
the equations: 
\begin{equation}\label{eq:kper2}
\Omega_k(\gamma^{(k)},\ldots,\gamma^{(1)},x,z_1,\ldots,z_k)=\Omega_k(\gamma^{(k)},\ldots,\gamma^{(1)},y,z_1,\ldots,z_k),
\end{equation}
are satisfied for all $x,y,z_1,\ldots,z_k\in X$ and any $k$-tuples $(\gamma^{(1)},\ldots,\gamma^{(k)})\in \{\sigma^{-1},\hat\sigma^{-1}\}^k$.
\end{corollary}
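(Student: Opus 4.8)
The plan is to reduce Corollary~\ref{cor:mpl_ss} to Theorem~\ref{thm:multieq}(iii) by relating the "words" built from $\{\sigma,\tau\}$ to those built from $\{\sigma^{-1},\hat\sigma^{-1}\}$. Recall from the discussion after Lemma~\ref{lm:invreg} that, for a regular solution, we have $x:y=\tau_{\sigma_y^{-1}(x)}(y)=\hat\sigma_x^{-1}(y)$, and also $x\cdot y=\sigma_x^{-1}(y)$; thus the two operations of the associated regular $q$-cycle set are exactly the maps $\gamma^{(1)}\in\{\sigma^{-1},\hat\sigma^{-1}\}$ appearing in~\eqref{eq:kper2}. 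Since $(X,\sigma,\tau)$ is regular and finite, every $\sigma_x$ and $\hat\sigma_x$ has finite order, so each of $\sigma$, $\tau$ is expressible in terms of $\sigma^{-1}$ and $\hat\sigma^{-1}$ (and vice versa) in the following sense: $\sigma_x=(\sigma_x^{-1})^{n_x-1}$ for $n_x$ the order of $\sigma_x$, and $\tau_y=\hat\sigma^{-1}$ with the inner argument shifted, namely $\tau_y(z)=\hat\sigma^{-1}_{\sigma_z(y)}(z)$ by the identity $\tau_a(b)=\hat\sigma^{-1}_{\sigma_b(a)}(b)$ already used in Section~2 (proof of the second Lemma there).

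First I would record the syntactic substitution rules in the $\Omega$-notation. Using~\eqref{eq:1} and~\eqref{eq:short2} of Lemma~\ref{lm:u1} (the latter applies since $\{\sigma,\sigma^{-1}\}$ and $\{\hat\sigma,\hat\sigma^{-1}\}$ are closed under inverses), one can rewrite a length-$k$ term $\Omega_k(\gamma^{(k)},\dots,\gamma^{(1)},x,z_1,\dots,z_k)$ with each $\gamma^{(j)}\in\{\sigma,\tau\}$ as a (possibly longer) term in which every letter lies in $\{\sigma^{-1},\hat\sigma^{-1}\}$: a letter $\sigma$ is replaced by a block of $n-1$ copies of $\sigma^{-1}$ (inserting the appropriate repeated $z$-argument), and a letter $\tau_{(\,\cdot\,)}(z)$ is replaced by $\hat\sigma^{-1}_{\sigma_z(\,\cdot\,)}(z)$, where $\sigma_z(\,\cdot\,)$ is in turn a block of $\sigma^{-1}$-letters. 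The point is that these replacements do not touch the distinguished "head" variable $x$; they only change the $z$-slots and lengthen the word. Hence the family of equations~\eqref{eq:kper2} (for all lengths up to $k$, which we get for free by specializing trailing $z_i$'s via~\eqref{eq:short1}) implies each instance of~\eqref{eq:kper1}, and conversely. The conversion in the other direction is symmetric, writing $\sigma^{-1}$ as a power of $\sigma$ and $\hat\sigma^{-1}_a(b)=\tau_{\sigma_b^{-1}(a)}(b)$ as a $\tau$ preceded by a block of $\sigma^{-1}=\sigma^{\,\cdot\,}$ letters.

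Concretely, the steps are: (1) show that satisfaction of all the length-$\le k$ equations~\eqref{eq:kper1} over $\{\sigma,\tau\}$ is equivalent to satisfaction of all length-$\le k$ equations over $\{\sigma^{-1},\tau^{-1},\hat\sigma^{\pm1},\dots\}$ — but this is not even needed: by Theorem~\ref{thm:multieq}, conditions (ii) and (iii) are already known equivalent, so it suffices to compare~\eqref{eq:kper2} with~\eqref{eq:kper1}; (2) prove the forward implication $(i)\Rightarrow\eqref{eq:kper2}$ by taking an instance of~\eqref{eq:kper1} with $\gamma^{(j)}\in\{\sigma,\tau\}$ and rewriting it, via the substitution rules above together with~\eqref{eq:1}, \eqref{eq:4} and Lemma~\ref{lm:u1}, into an instance of~\eqref{eq:kper2} with the same head variable $x$ (note the rewriting only shortens on the $\{\sigma^{-1},\hat\sigma^{-1}\}$ side when we use~\eqref{eq:short2}, so one must be a little careful and instead argue that \eqref{eq:kper2} for all lengths $\le k$ implies \eqref{eq:kper1} for length exactly $k$); (3) prove the reverse implication $\eqref{eq:kper2}\Rightarrow(i)$ symmetrically, reconstructing the $\{\sigma,\tau\}$-equations from the $\{\sigma^{-1},\hat\sigma^{-1}\}$-ones; then apply Theorem~\ref{thm:multieq}$(iii)$.

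The main obstacle I expect is \textbf{bookkeeping the word length}. Replacing a single letter $\sigma$ by a power $\sigma^{-(n-1)}$ inflates the length by an amount depending on the point, and the maps $\sigma_x$ may have different orders for different $x$; one must check that the head variable $x$ never migrates into an inner (action) position under these rewrites, and that the equalities one needs hold at \emph{every} length $\le k$, not just at $k$ — this is why finiteness (hence finite orders, hence expressibility of $\sigma$ as a positive power of $\sigma^{-1}$) is essential and why the auxiliary identities~\eqref{eq:short1}–\eqref{eq:short2} of Lemma~\ref{lm:u1}, which let us pad or collapse trailing slots, do the real work. Once the substitution dictionary between $\{\sigma,\tau\}$-words and $\{\sigma^{-1},\hat\sigma^{-1}\}$-words is set up carefully and shown to fix the head variable, the equivalence with multipermutation level $\le k$ is immediate from Theorem~\ref{thm:multieq}.
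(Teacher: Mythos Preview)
Your rewriting strategy has a genuine gap. The crux is converting an innermost letter $\sigma$ to something in $\{\sigma^{-1},\hat\sigma^{-1}\}$ while keeping the $\Omega$-shape. Writing $\sigma_x=(\sigma_x^{-1})^{n_x-1}$ iterates $\sigma^{-1}$ with a \emph{fixed} subscript~$x$, but that is not the shape of an $\Omega$-term: in $\Omega_m(\sigma^{-1},\ldots,\sigma^{-1},x,w_1,\ldots,w_m)$ the subscript changes at every level, so your ``block of $n-1$ copies of $\sigma^{-1}$'' does not produce $\sigma_x^{-(n-1)}(z_1)$. The alternative, $\sigma_x(z_1)=\sigma_x^{-1}(\sigma_x^{2}(z_1))$, does stay in $\Omega$-shape at length~$1$, but now the first $z$-slot is $\sigma_x^{2}(z_1)$, which depends on~$x$; the corresponding term on the $y$-side carries $\sigma_y^{2}(z_1)$, and since~\eqref{eq:kper2} equates $\Omega_k$-values only for \emph{identical} $z$-arguments, you cannot conclude equality. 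Neither \eqref{eq:short1} nor \eqref{eq:short2} rescues this: both insert arguments of the form $\gamma_x^{\pm 1}(\cdot)$ that again depend on the head. Your parenthetical that the length-$\le k$ instances of~\eqref{eq:kper2} ``come for free'' from length~$k$ via~\eqref{eq:short1} is also false in the direction you need: $k$-permutational implies $K$-permutational only for $K\ge k$, never for $K<k$ (a multipermutation solution of level exactly~$2$ satisfies the level-$2$ equations but not the level-$1$ ones).

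The paper's argument bypasses all of this. It simply reruns the induction of Theorem~\ref{thm:multieq}, replacing the congruence~$\sim$ by~$\sim_:$. A finite regular solution is automatically non-degenerate, and then the results of Section~2 (Proposition~\ref{prop:con} and its corollaries) give $\sim=\sim_:=\hat\sim$; in particular the retract is unchanged, is again finite regular, and by Corollary~\ref{cor:mutinv} the operation $\hat\sigma$ on the retract is the quotient of~$\hat\sigma$ on~$X$. The characterization $a\sim_: b \Leftrightarrow \sigma^{-1}_a=\sigma^{-1}_b$ and $\hat\sigma^{-1}_a=\hat\sigma^{-1}_b$ from~\eqref{rel:reg} is then exactly what the last line of the inductive step requires, with $\{\sigma^{-1},\hat\sigma^{-1}\}$ playing the role that $\{\sigma,\tau\}$ played in Theorem~\ref{thm:multieq}. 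No word-translation is needed.
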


\begin{proof}
 The proof is analogous to the proof of Theorem \ref{thm:multieq}.
\end{proof}

Theorem \ref{thm:multieq} justifies the following definitions:

\begin{de}\label{de:kper}
Let $k\in \mathbb{N}$. A solution $(X,\sigma, \tau)$ is called $k$-\emph{permutational} if it satisfies \eqref{eq:kper1}.
\end{de}

A non-degenerate solution is a multipermutation solution of~level~$k\geq 1$ if and only if it is $k$-permutational and not $k-1$ permutational, according to Theorem~\ref{thm:multieq}. Nevertheless, there exist permutational solutions that are degenerate.

\begin{exm}
Let $(X,\sigma,\tau)$ be a non-degenerate solution of multipermutation level~$k$. Let $Y=X\times\{0,1\}$ and let, for $x,y\in X$ and $i,j\in \{0,1\}$
\[ \lambda_{(x,i)}((y,j))=(\sigma_{x}(y),j)\quad\text{ and }\quad
\rho_{(x,i)}((y,j))=(\tau_{x}(y),0).\]
Then $(Y,\lambda,\rho)$ is a $k$-permutational left non-degenerate solution.
\end{exm}
	
\begin{exm}
The solution $(X,\sigma,\tau)$ from Example~\ref{exm:1} is not $k$-permutational, for any $k\in\mathbb{N}$, since
$\Omega_k(\sigma,\ldots,\sigma,x,a,\ldots,a)=x$, for both $x\in\{b,c\}$.
\end{exm}
\begin{corollary}\label{lm:u5}
Let $(X,\sigma,\tau)$ be a $k$-permutational solution. Then, for every $x,y,z_1,\ldots,z_{k+1}\in X$ and any $(k+1)$-tuples $(\gamma^{(1)},\ldots,\gamma^{(k+1)})\in\{\sigma,\tau\}^{k+1}$:
\begin{align}\label{eq:short3}
\Omega_{k+1}(\gamma^{(k+1)},\ldots,\gamma^{(1)},x,z_1,\dots,z_k,z_{k+1})=\Omega_{k}(\gamma^{(k+1)},\ldots,\gamma^{(2)},y,z_2,\dots,z_k,z_{k+1}).
\end{align}
\end{corollary}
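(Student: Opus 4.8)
The plan is to deduce \eqref{eq:short3} directly from the defining identity \eqref{eq:kper1} of a $k$-permutational solution, after peeling off the innermost action with the combinatorial identity \eqref{eq:1}. So this will be a short argument built on Theorem~\ref{thm:multieq} and the lemma preceding it; no new idea is needed.

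First I would apply \eqref{eq:1} with $m=k+1$ to the left-hand side, which gives
\[
\Omega_{k+1}(\gamma^{(k+1)},\ldots,\gamma^{(1)},x,z_1,\dots,z_k,z_{k+1})
=\Omega_{k}\bigl(\gamma^{(k+1)},\ldots,\gamma^{(2)},\Omega_1(\gamma^{(1)},x,z_1),z_2,\dots,z_k,z_{k+1}\bigr).
\]
Here $\Omega_1(\gamma^{(1)},x,z_1)=\gamma^{(1)}_x(z_1)$ is simply some element of $X$, and $(\gamma^{(2)},\ldots,\gamma^{(k+1)})$ is a $k$-tuple in $\{\sigma,\tau\}^k$. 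Hence by Definition~\ref{de:kper}, i.e.\ by \eqref{eq:kper1} instantiated at the base point $\Omega_1(\gamma^{(1)},x,z_1)$ on one side and at $y$ on the other, with parameters $z_2,\dots,z_{k+1}$, we get
\[
\Omega_{k}\bigl(\gamma^{(k+1)},\ldots,\gamma^{(2)},\Omega_1(\gamma^{(1)},x,z_1),z_2,\dots,z_{k+1}\bigr)
=\Omega_{k}\bigl(\gamma^{(k+1)},\ldots,\gamma^{(2)},y,z_2,\dots,z_{k+1}\bigr),
\]
which is precisely the right-hand side of \eqref{eq:short3}. Chaining the two displays completes the proof.

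The only point requiring care — and it is the closest thing to an obstacle — is the index bookkeeping: dropping the first map $\gamma^{(1)}$ and the first parameter $z_1$ shifts everything down by one, and one must note that \eqref{eq:kper1} is quantified over \emph{all} base points of $X$, so it may legitimately be specialized at the composite element $\Omega_1(\gamma^{(1)},x,z_1)$. Everything else is purely formal.
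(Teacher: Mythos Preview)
Your proof is correct and essentially identical to the paper's: the paper applies \eqref{eq:kper1} first (replacing $y$ by $\Omega_1(\gamma^{(1)},x,z_1)$) and then \eqref{eq:1}, while you apply \eqref{eq:1} first and then \eqref{eq:kper1}, which is the same two-step argument run in the opposite direction.
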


\begin{proof}
	Use \eqref{eq:kper1} to replace $y$ by $\Omega_1(\gamma^{(1)},x,z_1)$ and then \eqref{eq:1}.
\end{proof}

By Theorem \ref{lm:TSbijections}, we have proved that, in the case of non-degenerate solutions, the mappings~$U$ and $T$ are invertible and the mappings $U^{-1}$ and $T^{-1}$ may be presented as
\begin{align*}
	&U^{-1}(x)= \sigma_{\tau^{-1}_x(x)}(x)=\Omega_2(\sigma,\tau^{-1},x,x,x), 
	\quad {\rm and}\quad \\
	&T^{-1}(x)= \tau_{\sigma^{-1}_x(x)}(x)=\Omega_2(\tau,\sigma^{-1},x,x,x).
\end{align*}
Now we show that, in the case of
$k$-permutational solutions, only one-sided non-degeneracy suffice\cz{s} to prove
	that one of the diagonal mappings is invertible.

\begin{proposition}\label{lm:bijections}
Let $(X,\sigma,\tau)$ be a left non-degenerate $k$-permutational solution. 
Then   
\begin{align*}
&U^{-1}(x)= \Omega_{k}(\sigma,\ldots,\sigma,x,\ldots,x).
\end{align*}
\end{proposition}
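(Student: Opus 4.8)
The plan is to write $V(x):=\Omega_k(\sigma,\ldots,\sigma,x,x,\ldots,x)$ (with $k$ copies of $\sigma$ and $k$ copies of $x$ in the trailing slots; this is well defined since it uses $\sigma$ only), and to show that $V$ is a two-sided inverse of the diagonal map $U(x)=\sigma_x^{-1}(x)$, which is defined because the solution is left non-degenerate. Establishing both $U(V(x))=x$ and $V(U(x))=x$ for all $x\in X$ will show that $U$ is a bijection with $U^{-1}=V$, which is exactly the assertion (and over a possibly infinite $X$ one does need to check both composites, not just one).

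For $U(V(x))=x$: since $U(V(x))=\sigma_{V(x)}^{-1}(V(x))$, it is enough to prove $\sigma_{V(x)}(x)=V(x)$. Directly from the definition of the terms $\Omega_i$ one has $\sigma_{V(x)}(x)=\Omega_{k+1}(\sigma,\ldots,\sigma,x,x,\ldots,x)$, and applying \eqref{eq:1} with $m=k+1$ to absorb the innermost action into the base point gives $\Omega_{k+1}(\sigma,\ldots,\sigma,x,x,\ldots,x)=\Omega_k(\sigma,\ldots,\sigma,\sigma_x(x),x,\ldots,x)$. Now invoke $k$-permutationality \eqref{eq:kper1} with all $\gamma^{(i)}=\sigma$ to replace the base point $\sigma_x(x)$ by $x$; the right-hand side becomes $\Omega_k(\sigma,\ldots,\sigma,x,x,\ldots,x)=V(x)$, as required.

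For $V(U(x))=x$: first apply $k$-permutationality \eqref{eq:kper1} once more, this time to replace the base point $U(x)$ by $x$, so that $V(U(x))=\Omega_k(\sigma,\ldots,\sigma,x,U(x),\ldots,U(x))$. Writing $U(x)=\sigma_x^{-1}(x)=\Omega_1(\sigma^{-1},x,x)$, the right-hand side is precisely the left-hand side of \eqref{eq:short1} of Lemma~\ref{lm:u1}, applied with $\gamma=\sigma$ and $\Gamma=\{\sigma,\sigma^{-1}\}$ (closed under inverses by left non-degeneracy), so it evaluates to $x$. (Alternatively, this identity can be obtained by a short induction on the number of trailing $U(x)$-slots, peeling them off one at a time with \eqref{eq:1} and $\sigma_x\sigma_x^{-1}(x)=x$.)

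The whole argument is essentially bookkeeping inside the $\Omega$-notation, and I do not expect a genuine obstacle. The only real content is the observation that $k$-permutationality is exactly the property that lets one shift the base point at the very top of the tower of $\sigma$-actions; it is this shift that forces the "$k$-fold squaring'' map $V$ to undo $U$, so that left non-degeneracy alone (rather than full non-degeneracy) suffices here.
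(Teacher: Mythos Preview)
Your proof is correct and follows essentially the same approach as the paper's: both directions use $k$-permutationality to shift the base point of the $\Omega$-tower, together with the telescoping identity \eqref{eq:short1}. Your verification of $U(V(x))=x$ is slightly more direct than the paper's---you reduce to showing $\sigma_{V(x)}(x)=V(x)$ via \eqref{eq:1} and \eqref{eq:kper1}, whereas the paper writes $U(V(x))$ as an $\Omega_{k+1}$-term and invokes Corollary~\ref{lm:u5}---but the content is the same.
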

\begin{proof}
 Recall,  for $x\in X$, 
$U(x)= \sigma^{-1}_x(x)=\Omega_1(\sigma^{-1},x,x)$.
Hence:
\begin{align*}
&
\Omega_{k}(\sigma,\ldots,\sigma,U(x),\ldots,U(x))=
\Omega_{k}(\sigma,\ldots,\sigma,\Omega_1(\sigma^{-1},x,x),\ldots,\Omega_1(\sigma^{-1},x,x))\stackrel{\eqref{eq:kper}}=\\
&\Omega_{k}(\sigma,\ldots,\sigma,x,\Omega_1(\sigma^{-1},x,x),\ldots,\Omega_1(\sigma^{-1},x,x))\stackrel{\eqref{eq:short1}}=x.
\end{align*}
Further, 
\begin{align*}
&U(\Omega_k(\sigma,\ldots,\sigma,x,\ldots,x))=\Omega_1(\sigma^{-1},\Omega_k(\sigma,\ldots,\sigma,x,\ldots,x),\Omega_k(\sigma,\ldots,\sigma,x,\ldots,x))=\\
&\sigma^{-1}_{\Omega_k(\sigma,\ldots,\sigma,x,\ldots,x)}(\Omega_k(\sigma,\ldots,\sigma,x,\ldots,x))=
\Omega_{k+1}(\sigma^{-1},\sigma,\ldots,\sigma,x,\ldots,x,\Omega_k(\sigma,\ldots,\sigma,x,\ldots,x))
\stackrel{\eqref{eq:short3}}=\\
&\Omega_k(\sigma^{-1},\sigma,\ldots,\sigma,x\ldots,x,\Omega_k(\sigma,\ldots,\sigma,x,\ldots,x))=\\
&
\sigma^{-1}_{\Omega_{k-1}(\sigma,\ldots,\sigma,x,\ldots,x)}(\Omega_k(\sigma,\ldots,\sigma,x,\ldots,x))=\sigma^{-1}_{\Omega_{k-1}(\sigma,\ldots,\sigma, x,\ldots,x)}\sigma_{\Omega_{k-1}(\sigma,\ldots,\sigma, x,\ldots,x)}(x)=x. \qedhere
\end{align*}
\end{proof}

Analogously we can prove that, for a right non-degenerate $k$-permutational solution, we have $T^{-1}(x)= \Omega_{k}(\tau,\ldots,\tau,x,\ldots,x)$.

\begin{example}
    Let $(X,\cdot,1)$ be a group and let $\sigma_x(y)=x\cdot y$ and $\tau_y(x)=1$, for all $x,y\in X$. Then $(X,\sigma,\tau)$ is a left non-degenerate solution that is not $k$-permutational, for any $k\geq 1$, since $\Omega_k(\sigma,\ldots,\sigma,a,1,\ldots,1)=a$, for any $a\in X$. The mapping $U$ is not a permutation since $U(x)=1$, for any $x\in X$.
\end{example}

\section{Reductive solutions}\label{sec:kred}
In this section we study a set of identities similar to the ones that describe the multipermutation level.

\begin{de}\label{de:kred}
Let $k\in \mathbb{N}$. A solution $(X,\sigma, \tau)$ is called $k$-\emph{reductive} if, for every $x,y,z_1,\ldots,z_k\in X$ and any $k$-tuples $(\gamma^{(1)}, \ldots,\gamma^{(k)})\in \{\sigma,\tau\}^k$:
\begin{equation}\label{eq:kred}
\Omega_k(\gamma^{(k)},\ldots,\gamma^{(1)},x,z_1,\ldots,z_k)=\Omega_{k-1}(\gamma^{(k)},\ldots,\gamma^{(2)},z_1,\ldots,z_k).
\end{equation}
\end{de}

In the case of involutive solutions~\cite{JPZ20a} the property of $k$-reductivity was 
defined by 
\begin{align*}
\Omega_k(\sigma,\ldots,\sigma,x,z_1,\ldots,z_k)=\Omega_{k-1}(\sigma,\ldots,\sigma,z_1,\ldots,z_k)
\end{align*}
 only. It is nevertheless easy to prove that all the properties
\eqref{eq:kred} are equivalent for involutive solutions.

Clearly, $1$-reductive solution is square-free since $\gamma_x(x)=\Omega_1(\gamma,x,x)=\Omega_0(x)=x$, for $\gamma\in \{\sigma,\tau\}$. Moreover, each $k$-reductive solution is 
$k$-permutational. 
On the other hand, by Corollary~\ref{lm:u5}, every $k$-permutational solution is $k+1$-reductive.
We can thus say that $k$-reductivity is sort of a $k-\frac 12$-permutability. Nevertheless, there are cases where the degree of
reductivity and permutability agree. 
For instance, the authors together with Zamojska-Dzienio  proved in \cite[Theorem 4.6]{JPZ20b} that if a solution $(X,\sigma,\tau)$ is distributive (i.e. $\sigma_y\sigma_x=\sigma_{\sigma_y(x)}\sigma_y$ and $\tau_y\tau_x=\tau_{\tau_y(x)}\tau_y$ for all $x,y\in X$) then for $k\geq 2$, a solution is $k$-reductive if and only if it is $k$-permutational. It is also true that if a $k$-permutational solution~$(X,\sigma,\tau)$ is square free then, for every $x,y,z_2,\ldots,z_k\in X$ and any $k$-tuples $(\gamma^{(1)},\ldots,\gamma^{(k)})\in \{\sigma,\tau\}^k$:
\begin{align*}
&\Omega_k(\gamma^{(k)},\ldots,\gamma^{(2)},\gamma^{(1)},y,x,z_2,\ldots,z_k)\stackrel{\eqref{eq:kper}}=
\Omega_k(\gamma^{(k)},\ldots,\gamma^{(2)},\gamma^{(1)},x,x,z_2,\ldots,z_k)\stackrel{\eqref{eq:1}}=\\
&\Omega_{k-1}(\gamma^{(k)},\ldots,\gamma^{(2)},\Omega_1(\gamma^{(1)},x,x),z_2,\ldots,z_k)=
\Omega_{k-1}(\gamma^{(k)},\ldots,\gamma^{(2)},x,z_2,\ldots,z_k).
\end{align*}
Hence, each square free non-degenerate solution of multipermutation level~$k$ is $k$-reductive.

The condition of square-freeness can be weakened.
For an involutive solution $(X,\sigma,\tau)$,  Gateva-Ivanova considered in \cite[Definition 4.3]{GI18} a condition saying
\begin{align*}
\tag{$\ast$} \forall x\in X\quad \exists y\in X\quad  \sigma_y(x)=x.
\end{align*}
It is evident, that each square free solution satisfies Condition $(\ast)$. On the other hand solutions without fixed points are examples of ones which do not satisfy this condition.

\begin{fact}\cite[Proposition 8.2]{GIC12}, \cite[Proposition 4.7]{GI18}
If a non-degenerate involutive solution satisfies Condition~$(\ast)$ then it is $k$-permutational if and only if it is $k$-reductive.
\end{fact}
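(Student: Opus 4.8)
The plan is to prove the two implications separately. The direction ``$k$-reductive $\Rightarrow$ $k$-permutational'' needs nothing beyond being a solution and was already observed above: the right-hand side of \eqref{eq:kred} does not mention the base variable, so \eqref{eq:kper1} follows at once. Hence all the content lies in the converse, and from now on I assume $(X,\sigma,\tau)$ is non-degenerate, involutive, $k$-permutational and satisfies Condition~$(\ast)$, the goal being to derive \eqref{eq:kred}.

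The first step, and the only place where involutivity is genuinely used, is to obtain a $\tau$-analogue of Condition~$(\ast)$: \emph{for every $x\in X$ there is $w\in X$ with $\tau_w(x)=x$}. Given $x$, use $(\ast)$ to pick $y$ with $\sigma_y(x)=x$. Then $r(y,x)=(\sigma_y(x),\tau_x(y))=(x,\tau_x(y))$, and since $r$ is involutive, $r\bigl(x,\tau_x(y)\bigr)=r^2(y,x)=(y,x)$; comparing second coordinates gives $\tau_{\tau_x(y)}(x)=x$, so $w=\tau_x(y)$ works. I would flag noticing that this $\tau$-version is both needed and available as the real (if short) crux of the proof; the computation itself is a single line.

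The second step mimics the square-free computation displayed in the text, with ``$\gamma_x(x)=x$'' replaced by the fixed points just produced. Fix $x,z_1,\dots,z_k\in X$ and a tuple $(\gamma^{(1)},\dots,\gamma^{(k)})\in\{\sigma,\tau\}^k$. If $\gamma^{(1)}=\sigma$, use $(\ast)$ to choose $w$ with $\sigma_w(z_1)=z_1$; if $\gamma^{(1)}=\tau$, use the first step to choose $w$ with $\tau_w(z_1)=z_1$. Either way $\gamma^{(1)}_w(z_1)=z_1$, so replacing the base variable $x$ by $w$ via $k$-permutationality and then applying \eqref{eq:1},
\begin{align*}
\Omega_k(\gamma^{(k)},\dots,\gamma^{(1)},x,z_1,\dots,z_k)
&\stackrel{\eqref{eq:kper1}}{=}\Omega_k(\gamma^{(k)},\dots,\gamma^{(1)},w,z_1,\dots,z_k)\\
&\stackrel{\eqref{eq:1}}{=}\Omega_{k-1}\bigl(\gamma^{(k)},\dots,\gamma^{(2)},\gamma^{(1)}_w(z_1),z_2,\dots,z_k\bigr)\\
&=\Omega_{k-1}(\gamma^{(k)},\dots,\gamma^{(2)},z_1,\dots,z_k),
\end{align*}
which is exactly \eqref{eq:kred}.

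The only point that needs care — and the reason \eqref{eq:1} rather than \eqref{eq:4} is the identity to invoke — is that $w$ may be chosen depending on $z_1$ alone, not on the whole tuple $z_1,\dots,z_k$; \eqref{eq:1} absorbs precisely that innermost application into the base point, so the substitution is harmless. As a sanity check one may also argue structurally: $k$-permutationality says $\mathrm{Ret}^{k-1}(X,\sigma,\tau)$ is permutational, hence (being non-degenerate) a Lyubashenko solution; involutivity forces its two constant maps to be mutually inverse bijections, and Condition~$(\ast)$, which passes to retracts, forces them to be the identity, so $\mathrm{Ret}^{k-1}$ is a projection solution, and unwinding this gives \eqref{eq:kred}. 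I would nonetheless present the direct computation above, as it is shorter and self-contained.
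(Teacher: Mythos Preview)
Your proof is correct, and the second step is verbatim the argument the paper gives for the generalization in Proposition~\ref{prop:star} (the paper does not prove the Fact itself, merely citing it, and then proves the non-involutive version assuming both \eqref{prop:star:1} and \eqref{prop:star:2}). Your first step, deriving the $\tau$-fixed-point condition from involutivity and Condition~$(\ast)$, is exactly the missing link that reduces the Fact to Proposition~\ref{prop:star}, so your approach coincides with the paper's.
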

For non-involutive solutions we have a similar result.

\begin{proposition}\label{prop:star}
Let $k\in \mathbb{N}$.  Let $(X,\sigma,\tau)$ be a solution satisfying the following two properties:
  \begin{align}
    \forall x\in X \ \exists y_x\in X\quad \sigma_{y_x}(x)&=\Omega_1(\sigma,y_x,x)=x,\label{prop:star:1}\\
    \forall x\in X \ \exists z_x\in X\quad \tau_{z_x}(x)&=\Omega_1(\tau,z_x,x)=x.\label{prop:star:2}
  \end{align}
  Then $(X,\sigma,\tau)$ is $k$-permutational if and only if it is $k$-reductive.
\end{proposition}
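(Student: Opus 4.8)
The plan is to show the two directions separately. The implication ``$k$-reductive $\Rightarrow$ $k$-permutational'' is already noted before the statement (comparing \eqref{eq:kred} with \eqref{eq:kper1}: if $\Omega_k(\gamma^{(k)},\ldots,\gamma^{(1)},x,z_1,\ldots,z_k)$ equals $\Omega_{k-1}(\gamma^{(k)},\ldots,\gamma^{(2)},z_1,\ldots,z_k)$, which does not depend on $x$, then it certainly agrees for two different choices of the first argument), so nothing new is needed there and I would just record it. The real content is the converse: assuming $k$-permutationality together with the two fixed-point conditions \eqref{prop:star:1} and \eqref{prop:star:2}, I want to derive \eqref{eq:kred} for every mixed tuple $(\gamma^{(1)},\ldots,\gamma^{(k)})\in\{\sigma,\tau\}^k$.

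For the converse, fix $x,z_1,\ldots,z_k\in X$ and a tuple $(\gamma^{(1)},\ldots,\gamma^{(k)})$. The first leftmost operation applied in $\Omega_k(\gamma^{(k)},\ldots,\gamma^{(1)},x,z_1,\ldots,z_k)$ is $\gamma^{(1)}$ acting on $z_1$ with lower index $x$. The idea is: whichever of $\sigma,\tau$ the letter $\gamma^{(1)}$ is, pick the corresponding witness from \eqref{prop:star:1} or \eqref{prop:star:2} — that is, choose $w\in X$ with $\gamma^{(1)}_w(z_1)=\Omega_1(\gamma^{(1)},w,z_1)=z_1$ (take $w=y_{z_1}$ if $\gamma^{(1)}=\sigma$, $w=z_{z_1}$ if $\gamma^{(1)}=\tau$). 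Now apply the $k$-permutationality identity \eqref{eq:kper1} with the chosen tuple, substituting $y:=w$ in place of the free first variable; this gives
\begin{align*}
\Omega_k(\gamma^{(k)},\ldots,\gamma^{(1)},x,z_1,\ldots,z_k)
&=\Omega_k(\gamma^{(k)},\ldots,\gamma^{(1)},w,z_1,\ldots,z_k).
\end{align*}
Then peel off the innermost step using \eqref{eq:1}:
\begin{align*}
\Omega_k(\gamma^{(k)},\ldots,\gamma^{(1)},w,z_1,\ldots,z_k)
&=\Omega_{k-1}(\gamma^{(k)},\ldots,\gamma^{(2)},\Omega_1(\gamma^{(1)},w,z_1),z_2,\ldots,z_k)\\
&=\Omega_{k-1}(\gamma^{(k)},\ldots,\gamma^{(2)},z_1,z_2,\ldots,z_k),
\end{align*}
where the last equality uses $\Omega_1(\gamma^{(1)},w,z_1)=z_1$ by the choice of $w$. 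Combining the two displays yields exactly \eqref{eq:kred}, so the solution is $k$-reductive.

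The only subtlety, and the step I would treat most carefully, is making sure the fixed-point hypothesis is invoked for the right operation: the letter $\gamma^{(1)}$ can independently be $\sigma$ or $\tau$, which is precisely why Proposition~\ref{prop:star} asks for \emph{both} \eqref{prop:star:1} and \eqref{prop:star:2} rather than a single condition as in the involutive case. Everything else is bookkeeping: the $k$-permutationality identity \eqref{eq:kper1} is quantified over all $\{\sigma,\tau\}$-tuples and over all values of the first variable (so substituting the witness $w$ is legitimate), and \eqref{eq:1} is a purely formal rewriting of $\Omega_k$. I would also remark that no non-degeneracy is needed for this argument, in contrast to Proposition~\ref{lm:bijections}.
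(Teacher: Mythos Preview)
Your proof is correct and follows essentially the same approach as the paper: use $k$-permutationality to replace the first variable $x$ by a witness $w$ fixing $z_1$ under $\gamma^{(1)}$, then peel off the innermost layer via \eqref{eq:1}. The only cosmetic difference is that the paper writes out the case $\gamma^{(1)}=\sigma$ explicitly and says ``analogously for $\tau$'', whereas you handle both cases uniformly.
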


\begin{proof}
Let $(X,\sigma,\tau)$ be $k$-permutational solution which satisfies \eqref{prop:star:1}--\eqref{prop:star:2}. Then for each $z\in X$ there exist $a_z,b_z\in X$ such that 
\[
\sigma_{a_z}(z)=z\quad {\rm and}\quad \tau_{b_z}(z)=z.
\]
Hence for every $x,z,z_2,\ldots,z_k\in X$,  any $k$-tuples $(\gamma^{(1)},\ldots,\gamma^{(k)})\in \{\sigma,\tau\}^k$ and suitable $a_z$ and $b_z$ we have:
\begin{align*}
&\Omega_k(\gamma^{(k)},\ldots,\gamma^{(2)},\sigma,x,z,z_2,\ldots,z_k)\stackrel{\eqref{eq:kper1}}=\Omega_k(\gamma^{(k)},\ldots,\gamma^{(2)},\sigma, a_z,z,z_2,\ldots,z_k)\stackrel{\eqref{eq:1}}=\\
&\Omega_{k-1}(\gamma^{(k)},\ldots,\gamma^{(2)},\Omega_1(\sigma,a_z,z),z_2,\ldots,z_k)\stackrel{\eqref{prop:star:1}}=
\Omega_{k-1}(\gamma^{(k)},\ldots,\gamma^{(2)},z,z_2,\ldots,z_k),
\end{align*}
and analogously for $\tau$,  which completes the proof.
\end{proof}

In the case of a non-degenerate involutive solution $(X,\sigma,\tau)$, the permutation group is defined as the subgroup $\Mlt(X)=\langle\sigma_x:x\in X\rangle$ of the symmetric group $S(X)$ generated by all translations $\sigma_x$, with $x\in X$. Orbits of the action $\Mlt(X)$ on $X$ are referred to as the orbits of a solution $(X,\sigma,\tau)$. A solution $(X,\sigma,\tau)$ is \emph{indecomposable} if the permutation group  $\mathcal{G}(X)$ acts transitively on $X$. 
In the general non-degenerate case, the permutation group of a solution is more complicated. 
Bachiller defined in \cite[Definition 3.10]{B18} the permutation group of the solution as some subgroup of the product $S(X)\times S(X)$. Ced\'o et al. showed in \cite[Lemma 1.3]{CJKAV} that such group is isomorphic to the group generated by all pairs of the form $(\sigma_x,\tau^{-1}_x)$, with $x\in X$. Following Stefanello and Trappeniers \cite{ST} we will treat orbits of a solution as a subsets of $X$ closed under all maps $\sigma_x, \sigma^{-1}_x,\tau_x,\tau_x^{-1}$, for $x\in X$.

Rump proved in \cite{Rump05} that every finite non-degenerate  square-free involutive solution is decomposable and he gave an example of an indecomposable infinite square-free involutive solution. Gateva-Ivanova and Cameron showed in \cite{GIC12} that non-degenerate square-free involutive solutions (of arbitrary cardinality) with finite multipermutation level are decomposable. We will show that each non-degenerate square-free solution 
of multipermutation level $k$ and 
arbitrary cardinality is always decomposable.

Gateva-Ivanova showed in \cite[Theorem 4.14]{GIC12} that, for a non-degenerate involutive multipermutation square-free solution $(X,\sigma,\tau)$ of level $k$, its orbits are multipermutation solutions of level $k-1$. Similar result is also true for non-involutive 
 solutions. 
\begin{theorem}\label{thm:orb}
Orbits of non-degenerate $k$-reductive solutions are $(k-1)$-permutational. 
\end{theorem}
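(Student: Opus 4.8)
The plan is to show that if $(X,\sigma,\tau)$ is $k$-reductive and $O\subseteq X$ is an orbit, then the restricted solution $(O,\sigma|_O,\tau|_O)$ is $(k-1)$-permutational, i.e.\ satisfies \eqref{eq:kper1} with parameter $k-1$. Since orbits are closed under $\sigma_x,\sigma_x^{-1},\tau_x,\tau_x^{-1}$ for $x\in O$, the restriction is again a non-degenerate solution, so by Theorem~\ref{thm:multieq} it suffices to verify the equations
\[
\Omega_{k-1}(\gamma^{(k-1)},\ldots,\gamma^{(1)},u,w_1,\ldots,w_{k-1})=\Omega_{k-1}(\gamma^{(k-1)},\ldots,\gamma^{(1)},v,w_1,\ldots,w_{k-1})
\]
for all $u,v,w_1,\ldots,w_{k-1}\in O$ and all $(\gamma^{(1)},\ldots,\gamma^{(k-1)})\in\{\sigma,\tau\}^{k-1}$.

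The key idea is that, since $u,v$ lie in the same orbit, $u$ can be produced from $v$ by applying finitely many maps of the form $\gamma_w$ with $\gamma\in\{\sigma,\sigma^{-1},\tau,\tau^{-1}\}$ and $w\in O$. First I would handle the atomic step: prove that if $u=\gamma^{(0)}_w(v)$ for some $w\in O$ and $\gamma^{(0)}\in\{\sigma,\tau\}$, then the two $\Omega_{k-1}$-terms above agree. This follows by prefixing: apply $\gamma^{(0)}_w$-conjugation, or more directly observe
\[
\Omega_{k-1}(\gamma^{(k-1)},\ldots,\gamma^{(1)},u,w,w_1,\ldots,w_{k-1})
\]
is an $\Omega_k$-term with bottom variable $v$ after using \eqref{eq:1} backwards (writing $u=\Omega_1(\gamma^{(0)},w,v)$), so $k$-reductivity \eqref{eq:kred} collapses it to $\Omega_{k-1}(\gamma^{(k-1)},\ldots,\gamma^{(1)},w,w_1,\ldots,w_{k-1})$; but that last term has lost $u$ entirely, hence is independent of whether we started from $u$ or $v$. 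One then needs the inverse directions: if $u=\gamma^{(0)-1}_w(v)$, i.e.\ $v=\gamma^{(0)}_w(u)$, the same computation with $u,v$ swapped applies. Chaining these atomic steps along a word expressing the orbit relation gives the general equality.

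There is a subtlety I would address carefully: the ``extra slot'' trick above introduces an auxiliary variable $w$ in the position $z_1$, but the statement to be proved has exactly $k-1$ variables $w_1,\ldots,w_{k-1}$ and no freedom for an extra one — yet since the equation must hold for \emph{all} choices of the $w_i$, I can simply set $z_1:=w$ and shift: prove $\Omega_{k-1}(\ldots,u,w,w_2,\ldots,w_{k-1}) = \Omega_{k-1}(\ldots,v,w,w_2,\ldots,w_{k-1})$ for all $w$, which is an instance of the target equation (with $w_1=w$), and the argument above shows both sides equal $\Omega_{k-1}(\ldots,w,w_2,\ldots,w_{k-1})$ when $u,v$ differ by one generator at a point of $O$. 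Then induct on the length of the connecting word. I expect the main obstacle to be bookkeeping: making sure the conjugation/prefixing step is legitimate when $\gamma^{(0)}\in\{\sigma^{-1},\tau^{-1}\}$ (one cannot directly write $u=\Omega_1(\sigma^{-1},w,v)$ as a term built from $\sigma,\tau$ only), which is why I split into the two directions $v=\gamma^{(0)}_w(u)$ and $u=\gamma^{(0)}_w(v)$ and use the symmetry of the equality rather than trying to absorb $\sigma^{-1},\tau^{-1}$ into the $\Omega_k$-term. Everything else is a routine application of \eqref{eq:1}, \eqref{eq:4} and Definition~\ref{de:kred}.
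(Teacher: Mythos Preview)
Your approach is the same as the paper's: write $u$ and $v$ as connected by a word in the generators $\sigma_c^{\pm1},\tau_c^{\pm1}$, absorb one letter at a time into the $\Omega_{k-1}$-tower via \eqref{eq:1} to get an $\Omega_k$-tower, and collapse it with $k$-reductivity. Your handling of the inverse generators by swapping the roles of $u$ and $v$ is fine; the paper instead first records the auxiliary identity
\[
\Omega_k\bigl(\gamma^{(k)},\ldots,\gamma^{(2)},(\gamma^{(1)})^{-1},x,z_1,\ldots,z_k\bigr)=\Omega_{k-1}\bigl(\gamma^{(k)},\ldots,\gamma^{(2)},z_1,\ldots,z_k\bigr),
\]
which lets all four generator types be treated uniformly in the induction. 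Either way works.

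There is, however, a miscalculation in your atomic step that propagates into the whole ``subtlety'' paragraph. With $u=\gamma^{(0)}_w(v)=\Omega_1(\gamma^{(0)},w,v)$, applying \eqref{eq:1} backwards gives
\[
\Omega_{k-1}(\gamma^{(k-1)},\ldots,\gamma^{(1)},u,w_1,\ldots,w_{k-1})
=\Omega_k(\gamma^{(k-1)},\ldots,\gamma^{(1)},\gamma^{(0)},\,w,\,v,\,w_1,\ldots,w_{k-1}),
\]
with base $w$ and first variable $v$. The identity \eqref{eq:kred} drops the \emph{base} and promotes $z_1$ to the new base, so it collapses this to
\[
\Omega_{k-1}(\gamma^{(k-1)},\ldots,\gamma^{(1)},\,v,\,w_1,\ldots,w_{k-1}),
\]
not to a tower with $w$ in front. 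So the single step already gives exactly the equality you want, for \emph{arbitrary} $w_1,\ldots,w_{k-1}$; there is no ``extra slot'' issue, and your proposed fix (forcing $w_1=w$) would in fact only prove the equation for that particular value of $w_1$, which is not enough. Once you correct this, the induction on the length of the connecting word goes through cleanly, exactly as in the paper.
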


\begin{proof}
We first observe, by substituting $z_1\mapsto\Omega_1\big(\big(\gamma^{(1)}\big)^{-1},x,z_1\big)$ in~\eqref{eq:kred} that a
$k$-reductive solution satisfies, for all $\gamma^{(1)},\ldots,\gamma^{(k)}\in\{\sigma,\tau\}$ and $x,z_1,\ldots,z_k$,
\begin{equation}\label{eq:kred2}
	\Omega_k(\gamma^{(k)},\ldots,\gamma^{(2)},\big(\gamma^{(1)}\big)^{-1},x,z_1,\ldots,z_k)=\Omega_{k-1}(\gamma^{(k)},\ldots,\gamma^{(2)},z_1,\ldots,z_k),
\end{equation}
since
\begin{align*}
&\Omega_k(\gamma^{(k)},\ldots,\gamma^{(2)},\big(\gamma^{(1)}\big)^{-1},x,z_1,\ldots,z_k)\stackrel{\eqref{eq:1}}=\Omega_{k-1}(\gamma^{(k)},\ldots,\gamma^{(2)},\Omega_1(\big(\gamma^{(1)}\big)^{-1},x,z_1),z_2,\ldots,z_k)\stackrel{\eqref{eq:kred}}=\\
&\Omega_{k}(\gamma^{(k)},\ldots,\gamma^{(2)}\gamma^{(1)},x,\Omega_1(\big(\gamma^{(1)}\big)^{-1},x,z_1),z_2,\ldots,z_k)\stackrel{\eqref{eq:short2}}=\Omega_{k-1}(\gamma^{(k)},\ldots,\gamma^{(2)},z_1,\ldots,z_k).
\end{align*}

Now, let $(X,\sigma,\tau)$ be a $k$-reductive solution and $Orb(a)=\{\varphi(a)\colon \varphi\in \langle\sigma_x,\tau_x:x\in X\rangle\}$ be the orbit of $a\in X$. 
Let $x,y\in Orb(a)$. 
Since $x$ and $y$ lie in the same orbit, there exist $n\in\mathbb{N}$,  $\delta^{(1)},\ldots,\delta^{(n)}\in \{\sigma,\sigma^{-1},\tau,\tau^{-1}\}$ and
$c_1,\ldots,c_n\in X$ such that $x=\delta^{(1)}_{c_1}\cdots\delta^{(n)}_{c_n}(y)$.
Hence, for any $\gamma^{(k-1)},\ldots,\gamma^{(1)}\in \{\sigma,\tau\}$
and $z_1,\ldots,z_{k-1}\in Orb(a)$:
\begin{multline*}
\Omega_{k-1}(\gamma^{(k-1)},\ldots,\gamma^{(1)},x,z_1,\ldots,z_{k-1})=\\
\Omega_{k-1}(\gamma^{(k-1)},\ldots,\gamma^{(1)},\Omega_1(\delta^{(1)},c_1,\delta^{(2)}_{c_2}\cdots\delta^{(n)}_{c_n}(y)),z_1,\ldots,z_{k-1})\stackrel{\eqref{eq:1}}=\\
\Omega_{k}(\gamma^{(k-1)},\ldots,\gamma^{(1)},\delta^{(1)},c_1,\delta^{(2)}_{c_2}\cdots\delta^{(n)}_{c_n}(y),z_1,\ldots,z_{k-1})
\stackrel{\eqref{eq:kred}\text{ or }\eqref{eq:kred2}}=\\
\Omega_{k-1}(\gamma^{(k-1)},\ldots,\gamma^{(1)},\delta^{(2)}_{c_2}\cdots\delta^{(n)}_{c_n}(y),z_1,\ldots,z_{k-1})
\stackrel{\text{inductively}}=
\Omega_{k-1}(\gamma^{(k-1)},\ldots,\gamma^{(1)},y,z_1,\ldots,z_{k-1})
\end{multline*}
which means that each orbit $Orb(a)$ is $(k-1)$-permutational,
according to Theorem~\ref{thm:multieq}.
\end{proof}
Directly by Theorem \ref{thm:orb}, we obtain that indecomposable  multipermutation solution of level~$k$ cannot be $k$-reductive.

\begin{corollary}[see {\cite[Proposition 3.3]{CT}}]\label{cor:k-red-dec}
Each non-degenerate $k$-reductive solution of multipermutation level~$k$ is decomposable.
\end{corollary}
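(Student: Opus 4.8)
The plan is to derive the statement by contradiction from Theorem~\ref{thm:orb}; indeed it is essentially the contrapositive of the remark immediately preceding the claim. Let $(X,\sigma,\tau)$ be a non-degenerate $k$-reductive solution of multipermutation level exactly~$k$ (so, tacitly, $k\geq 1$), and suppose it were \emph{not} decomposable. Then the group $\langle\sigma_x,\tau_x:x\in X\rangle$ acts transitively on~$X$, which means that $X$ is a single orbit, $Orb(a)=X$ for every $a\in X$.

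I would then apply Theorem~\ref{thm:orb}: every orbit of a non-degenerate $k$-reductive solution is $(k-1)$-permutational, so the orbit $Orb(a)=X$ is $(k-1)$-permutational, i.e.\ $(X,\sigma,\tau)$ itself satisfies the equations~\eqref{eq:kper1} with $k$ replaced by $k-1$. Since $(X,\sigma,\tau)$ is non-degenerate, Theorem~\ref{thm:multieq} turns this into the statement that the multipermutation level of $(X,\sigma,\tau)$ is at most~$k-1$, contradicting the assumption that it equals~$k$. Hence $(X,\sigma,\tau)$ must be decomposable.

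I expect no genuine difficulty in this argument; the only points needing attention are bookkeeping ones. First, one checks that Theorem~\ref{thm:orb} really does apply to the full set when it forms a single orbit --- this is clear from its proof, which uses only that two elements share an orbit and that orbits are closed under all the maps $\sigma_z^{\pm1},\tau_z^{\pm1}$. Second, ``multipermutation level~$k$'' must be read in the sense of Definition~\ref{ret} (the least exponent making $\mathrm{Ret}^k$ trivial), so that ``level at most $k-1$'' is a real contradiction, and one records the convention $k\geq 1$, since for $k=0$ the term $\Omega_{k-1}$ in Definition~\ref{de:kred} is undefined and the claim is vacuous.
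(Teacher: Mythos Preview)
Your argument is correct and matches the paper's approach exactly: the paper states just before the corollary that ``indecomposable multipermutation solution of level~$k$ cannot be $k$-reductive'' as an immediate consequence of Theorem~\ref{thm:orb}, and the corollary is the contrapositive. You have simply spelled out the one-line deduction in full, including the appeal to Theorem~\ref{thm:multieq} to convert $(k-1)$-permutational into ``multipermutation level at most $k-1$''.
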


\begin{corollary}[see {\cite[Proposition 3.5]{CT}}] Non-degenerate multipermutation  solutions satisfying \eqref{prop:star:1}--\eqref{prop:star:2} are decomposable.
\end{corollary}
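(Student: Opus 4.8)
The plan is to obtain this statement as a direct consequence of Proposition~\ref{prop:star} and Corollary~\ref{cor:k-red-dec}; the whole argument is a short chain of implications and no new computation is needed. It is the non-involutive counterpart of the involutive fact recorded just before Proposition~\ref{prop:star}.

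Let $(X,\sigma,\tau)$ be a non-degenerate multipermutation solution satisfying \eqref{prop:star:1} and \eqref{prop:star:2}, and let $k$ denote its multipermutation level; this is a well-defined non-negative integer since the solution is multipermutation. If $k=0$ then $|X|=1$ and there is nothing to prove, so assume $k\geq 1$. By Theorem~\ref{thm:multieq}, a non-degenerate solution has multipermutation level at most $k$ precisely when it satisfies the equations \eqref{eq:kper1}, i.e.\ when it is $k$-permutational; hence $(X,\sigma,\tau)$ is $k$-permutational. Next, apply Proposition~\ref{prop:star}: since $(X,\sigma,\tau)$ satisfies \eqref{prop:star:1}--\eqref{prop:star:2}, being $k$-permutational is equivalent to being $k$-reductive, so $(X,\sigma,\tau)$ is $k$-reductive. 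Thus $(X,\sigma,\tau)$ is a non-degenerate $k$-reductive solution of multipermutation level $k$, and Corollary~\ref{cor:k-red-dec} gives that it is decomposable.

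There is essentially no obstacle here; it is a formal concatenation of the results established above. The only points requiring a little care are bookkeeping ones: one must feed the \emph{actual} multipermutation level $k$ of $(X,\sigma,\tau)$ into both Proposition~\ref{prop:star} and Corollary~\ref{cor:k-red-dec} (an upper bound on the level would not suffice for the latter, whose hypothesis is ``of multipermutation level $k$''), and one must separate off the trivial case $|X|=1$, where decomposability is vacuous, before invoking the machinery.
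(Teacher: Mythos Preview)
Your proposal is correct and is exactly the intended argument: the paper states this corollary without proof, immediately after Proposition~\ref{prop:star} and Corollary~\ref{cor:k-red-dec}, and the chain ``multipermutation level $k$ $\Rightarrow$ $k$-permutational (Theorem~\ref{thm:multieq}) $\Rightarrow$ $k$-reductive (Proposition~\ref{prop:star}) $\Rightarrow$ decomposable (Corollary~\ref{cor:k-red-dec})'' is precisely what is meant. One cosmetic remark: a one-element solution is technically \emph{in}decomposable rather than vacuously decomposable, so the honest reading of the statement (here and in~\cite{CT}) tacitly assumes $|X|>1$; your separating off of $k=0$ is the right instinct, just phrase it as excluding that case rather than calling it vacuous.
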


\begin{corollary}\label{cor:sqf-dec}
Non-degenerate multipermutation square-free solutions are decomposable.
\end{corollary}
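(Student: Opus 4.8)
The plan is to derive the statement directly from Corollary~\ref{cor:k-red-dec}, the only genuine ingredient being the passage from ``multipermutation level $k$'' to ``$k$-reductive'' for square-free solutions. So let $(X,\sigma,\tau)$ be a non-degenerate square-free multipermutation solution and let $k\geq 0$ be its multipermutation level. First I would invoke the computation recorded just before the Fact on Condition~$(\ast)$: using \eqref{eq:kper} to replace $y$ by $x$ in the second slot, then \eqref{eq:1} to peel off the innermost operation, and finally square-freeness in the form $\Omega_1(\gamma^{(1)},x,x)=x$, one obtains that every square-free non-degenerate solution of multipermutation level $k$ is $k$-reductive.

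Once $(X,\sigma,\tau)$ is known to be a non-degenerate $k$-reductive solution whose multipermutation level is exactly $k$, Corollary~\ref{cor:k-red-dec} applies verbatim and yields decomposability, with no restriction on $|X|$, which is precisely the assertion. The small cases $k=0$ and $k=1$ are immediate: $k=0$ forces $|X|=1$, and a square-free solution of level $1$ is permutational with $\sigma_x(x)=\tau_x(x)=x$, hence equals the projection solution on $X$, whose orbits are the singletons.

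I expect no real obstacle here, since the substance is already carried by Theorem~\ref{thm:orb} (orbits of a non-degenerate $k$-reductive solution are $(k-1)$-permutational) and its Corollary~\ref{cor:k-red-dec}. Equivalently one may argue by induction on the level: square-freeness is inherited by every orbit $Orb(a)$ viewed as a subsolution, so if $X$ had level $k\geq 1$ and were indecomposable, then $X=Orb(a)$ would be $(k-1)$-permutational by Theorem~\ref{thm:orb} applied to the $k$-reductive solution $X$, contradicting the minimality of $k$. The one place where care is needed, and the exact point where this argument fails without square-freeness, is the implication ``multipermutation level $k$ $\Rightarrow$ $k$-reductive'', which truly uses $r(x,x)=(x,x)$; everything else is formal.
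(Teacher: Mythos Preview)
Your proposal is correct and follows exactly the route the paper sets up: the paper states Corollary~\ref{cor:sqf-dec} without an explicit proof because it is an immediate consequence of the computation recorded just before the Fact (square-free $k$-permutational $\Rightarrow$ $k$-reductive) together with Corollary~\ref{cor:k-red-dec}, which is precisely what you do. Your additional remarks on the small cases and the alternative inductive phrasing via Theorem~\ref{thm:orb} are fine elaborations but not needed; the only caveat is the degenerate case $|X|=1$ (i.e.\ $k=0$), which is indecomposable by convention and should be tacitly excluded, as in the paper.
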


Analogously as in the previous section, there is a structural description what reductivity means.
In \cite{CT} Castelli and Trappeniers presented a variant of the definition of the multipermutation level of a solution.

\begin{de}\cite[Definition 3.1]{CT}
Let $(X,\sigma,\tau)$ be a non-degenerate  multipermutation solution. $mpl'(X,\sigma,\tau)$ is the smallest $k\in \mathbb{N}$ such that ${\rm Ret}^{k}(X,\sigma,\tau)$ is a trivial solution (possibly of size greater than $1$).
\end{de}

We shall connect this notion with our notion of reductivity.

\begin{lemma}\label{lm:retred}
	A non-degenerate solution $(X,\sigma,\tau)$ is $k$-reductive if and only if ${\rm Ret}(X,\sigma,\tau)$ is $(k-1)$-reductive.

\end{lemma}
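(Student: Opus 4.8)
The plan is to translate both sides into explicit systems of term equations over $X$ and observe that they are literally the same system. Write $X^{\sim}$ for the underlying set of $\mathrm{Ret}(X,\sigma,\tau)$, keeping the letters $\sigma,\tau$ for the induced operations there, as in Definition~\ref{ret}; non-degeneracy is used only to make $\mathrm{Ret}(X,\sigma,\tau)$ well defined, cf.\ Example~\ref{exm:1} and the remarks preceding Definition~\ref{ret}. First, a one-line induction on $j$ from the defining equalities $\sigma_{x^{\sim}}(y^{\sim})=\sigma_x(y)^{\sim}$, $\tau_{y^{\sim}}(x^{\sim})=\tau_y(x)^{\sim}$ and the recursion defining $\Omega$ gives, for all $j\geq 0$, all $\gamma^{(1)},\dots,\gamma^{(j)}\in\{\sigma,\tau\}$ and all $x,z_1,\dots,z_j\in X$,
\[
\Omega_j(\gamma^{(j)},\dots,\gamma^{(1)},x^{\sim},z_1^{\sim},\dots,z_j^{\sim})=\Omega_j(\gamma^{(j)},\dots,\gamma^{(1)},x,z_1,\dots,z_j)^{\sim}.
\]
Since the quotient map is onto, Definition~\ref{de:kred} applied inside $X^{\sim}$ then says that $\mathrm{Ret}(X,\sigma,\tau)$ is $(k-1)$-reductive if and only if, for all $\gamma^{(1)},\dots,\gamma^{(k-1)}\in\{\sigma,\tau\}$ and all $x,z_1,\dots,z_{k-1}\in X$,
\[
\Omega_{k-1}(\gamma^{(k-1)},\dots,\gamma^{(1)},x,z_1,\dots,z_{k-1})\ \sim\ \Omega_{k-2}(\gamma^{(k-1)},\dots,\gamma^{(2)},z_1,\dots,z_{k-1}).
\]

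Next I unfold $\sim$ via \eqref{eq:retract} in the form ``$a\sim b$ if and only if $\Omega_1(\gamma,a,w)=\Omega_1(\gamma,b,w)$ for all $\gamma\in\{\sigma,\tau\}$ and $w\in X$'', substitute the two $\Omega$-terms above for $a$ and $b$, and contract $\Omega_1(\gamma,\Omega_j(\dots),w)=\Omega_{j+1}(\gamma,\dots,w)$ on each side (this is just the recursion for $\Omega$, with $j=k-1$ on the left and $j=k-2$ on the right). The resulting condition is
\[
\Omega_k(\gamma,\gamma^{(k-1)},\dots,\gamma^{(1)},x,z_1,\dots,z_{k-1},w)=\Omega_{k-1}(\gamma,\gamma^{(k-1)},\dots,\gamma^{(2)},z_1,\dots,z_{k-1},w)
\]
for all $\gamma,\gamma^{(1)},\dots,\gamma^{(k-1)}\in\{\sigma,\tau\}$ and all $x,z_1,\dots,z_{k-1},w\in X$; renaming the $k$-tuple $(\gamma^{(1)},\dots,\gamma^{(k-1)},\gamma)$ and the point list $(z_1,\dots,z_{k-1},w)$, this is verbatim the condition of Definition~\ref{de:kred} that $(X,\sigma,\tau)$ be $k$-reductive. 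That closes the equivalence.

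The argument is pure bookkeeping, so I do not anticipate a genuine obstacle; the one spot where a slip is easy is the index shift — a $k$-tuple of maps over $X$ corresponds to a $(k-1)$-tuple over $X^{\sim}$ together with one extra ``outer'' map supplied by the characterization of $\sim$ — so I would make that correspondence explicit before matching the two families of equations. A minor caveat: for $k=1$ one reads the statement with the convention that a one-element solution is $0$-reductive, which is consistent since $(X,\sigma,\tau)$ is $1$-reductive exactly when it is a projection solution, which is exactly when $\sim$ collapses $X$ to a point, i.e.\ when $\mathrm{Ret}(X,\sigma,\tau)$ is trivial.
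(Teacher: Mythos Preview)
Your proof is correct and follows essentially the same approach as the paper's: both arguments pass between $k$-reductivity on~$X$ and $(k-1)$-reductivity on~$X^{\sim}$ by peeling off the outermost operation, using that $a\sim b$ is equivalent to $\gamma_a=\gamma_b$ for $\gamma\in\{\sigma,\tau\}$, and that $\Omega_j$ commutes with the quotient map. The paper presents the same chain of equivalences in the opposite order and without your explicit remark on the $k=1$ edge case; otherwise the proofs are the same.
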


\begin{proof}
		For $x,z_1,\ldots,z_k\in X$ and any $k$-tuples $(\gamma^{(1)},\ldots,\gamma^{(k)})\in \{\sigma,\sigma^{-1},\tau,\tau^{-1}\}^k$ we have:
	\begin{align*}
		&\Omega_k(\gamma^{(k)},\ldots,\gamma^{(1)},x,z_1,\ldots,z_k)=
		\Omega_{k-1}(\gamma^{(k)},\ldots,\gamma^{(2)},z_1,\ldots,z_k)\quad \Leftrightarrow\\
		&\gamma^{(k)}_{\Omega_{k-1}(\gamma^{(k-1)},\ldots,\gamma^{(1)},x,z_1,\ldots,z_{k-1})}(z_k)=\gamma^{(k)}_{\Omega_{k-2}(\gamma^{(k-1)},\ldots,\gamma^{(2)},z_1,\ldots,z_{k-1})}(z_{k})\quad \Leftrightarrow\\
		&\Omega_{k-1}(\gamma^{(k-1)},\ldots,\gamma^{(1)},x,z_1,\ldots,z_{k-1})\approx
		\Omega_{k-2}(\gamma^{(k-1)},\ldots,\gamma^{(2)},z_1,\ldots,z_{k-1})\quad \Leftrightarrow\\
		&\Omega_{k-1}(\gamma^{(k-1)},\ldots,\gamma^{(1)},x,z_1,\ldots,z_{k-1})/_\approx=
		\Omega_{k-2}(\gamma^{(k-1)},\ldots,\gamma^{(2)},z_1,\ldots,z_{k-1})/_\approx\quad \Leftrightarrow\\
		&\Omega_{k-1}(\gamma^{(k-1)},\ldots,\gamma^{(1)},x/_\approx,z_1/_\approx,\ldots,z_{k-1}/_\approx)=
		\Omega_{k-2}(\gamma^{(k-1)},\ldots,\gamma^{(2)},z_1/_\approx,\ldots,z_{k-1}/_\approx).
	\end{align*}
\end{proof}

\begin{corollary}\label{cor:redprim}
	Let $(X,\sigma,\tau)$ be a non-degenerate  $k$-reductive solution. Then ${\rm Ret}^{k-1}(X,\sigma,\tau)$ is a trivial solution.
\end{corollary}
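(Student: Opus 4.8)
The plan is to iterate Lemma~\ref{lm:retred}. That lemma says a non-degenerate solution is $k$-reductive if and only if its retract is $(k-1)$-reductive; so I would simply apply it $k-1$ times, being careful that each intermediate retract ${\rm Ret}^{j}(X,\sigma,\tau)$ is again a non-degenerate solution (which it is, by the discussion preceding Definition~\ref{ret}, referring to \cite{JPZ19}), so that Lemma~\ref{lm:retred} may legitimately be reapplied.

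Concretely, I would argue by induction on $k$. For $k=1$, a $1$-reductive solution is square-free, hence $T=U=\id$ and ${\rm Ret}^{0}(X,\sigma,\tau)=(X,\sigma,\tau)$; but actually I only need that a $1$-reductive solution is a trivial solution, since $\gamma_x(y)=\Omega_1(\gamma,x,y)$ and $1$-reductivity with the substitution giving \eqref{eq:kred2} forces $\gamma_x(y)=y$ for $\gamma\in\{\sigma,\tau\}$ — wait, more directly: $1$-reductivity reads $\Omega_1(\gamma,x,z_1)=\Omega_0(z_1)$, i.e. $\gamma_x(z_1)=z_1$ for all $x,z_1$, so the solution is trivial and ${\rm Ret}^{0}$ is trivial. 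For the inductive step, suppose the claim holds for $k-1$ and let $(X,\sigma,\tau)$ be $k$-reductive. By Lemma~\ref{lm:retred}, ${\rm Ret}(X,\sigma,\tau)$ is a non-degenerate $(k-1)$-reductive solution, so by the induction hypothesis ${\rm Ret}^{k-2}({\rm Ret}(X,\sigma,\tau))$ is trivial; since ${\rm Ret}^{k-2}({\rm Ret}(X,\sigma,\tau))={\rm Ret}^{k-1}(X,\sigma,\tau)$ by the definition of iterated retraction, this is exactly the assertion.

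I do not expect any real obstacle here: the content is entirely in Lemma~\ref{lm:retred}, and the corollary is a formal consequence. The only mild point to get right is the base case, where one must confirm that ``trivial solution'' in the sense of ${\rm Ret}^{0}$ matches ``$1$-reductive $\Rightarrow$ $\sigma_x=\tau_x=\id$'', and that one is using $k\geq 1$ (for $k=0$ the statement ${\rm Ret}^{-1}$ is vacuous or interpreted as $(X,\sigma,\tau)$ itself being trivial, which is fine). One should also note that this Corollary, together with Theorem~\ref{thm:multieq} and the fact that every square-free multipermutation solution of level $k$ is $k$-reductive, is what ultimately feeds Corollary~\ref{cor:sqf-dec}; but for the present statement only the clean induction on Lemma~\ref{lm:retred} is needed.
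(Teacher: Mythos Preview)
Your proposal is correct and follows essentially the same route as the paper: iterate Lemma~\ref{lm:retred} to conclude that ${\rm Ret}^{k-1}(X,\sigma,\tau)$ is $1$-reductive, then observe directly from the definition that a $1$-reductive solution has $\sigma_x=\tau_x=\id$ and is therefore trivial. The paper compresses the iteration into a single sentence rather than a formal induction, and phrases the base case as ``$1$-reductive means square-free permutational, hence trivial'', but the content is identical; your explicit remark that each intermediate retract remains non-degenerate (so Lemma~\ref{lm:retred} applies again) is a point the paper leaves implicit.
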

\begin{proof}
	By Lemma \ref{lm:retred},  ${\rm Ret}^{k-1}(X,\sigma,\tau)$ is a $1$-reductive solution, that means a square-free permutation solution
	and, in consequence, trivial.
\end{proof}

Clearly, for a non-degenerate solution $(X,\sigma,\tau)$ of multipermutation level $k$, 
\begin{align*}
&mpl'(X,\sigma,\tau)\leq k\leq mpl'(X,\sigma,\tau)+1.
\end{align*}
In \cite[Proposition 3.3]{CT} the authors showed that, if a solution of multipermutation level $k$ is indecomposable, then $k=mlp'(X,\sigma,\tau)$. It is actually the same result as our Corollary~\ref{cor:k-red-dec}.
Moreover, if a solution of multipermutation level $k$ satisfies the conditions \eqref{prop:star:1}--\eqref{prop:star:2} then $k=mpl'(X,\sigma,\tau)+1$. 
In other words, reductivity and $mpl'$ measure the same property of a solution and,
by Corollary \ref{cor:redprim}, we obtain:

\begin{corollary}
Let $(X,\sigma,\tau)$ be a non-degenerate $k$-reductive solution which is not $k-1$-reductive. Then 
\begin{align*}
mpl'(X,\sigma,\tau)=k-1.
\end{align*}
\end{corollary}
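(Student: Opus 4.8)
The plan is to sandwich $mpl'(X,\sigma,\tau)$ between $k-1$ and $k-1$, using the two halves of the reductivity--retraction dictionary already established. For the upper bound I would invoke Corollary~\ref{cor:redprim}: since $(X,\sigma,\tau)$ is non-degenerate and $k$-reductive, $\mathrm{Ret}^{k-1}(X,\sigma,\tau)$ is a trivial solution, so $k-1$ already witnesses triviality of an iterated retract and hence $mpl'(X,\sigma,\tau)\le k-1$.

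For the lower bound I would argue by contradiction, supposing $mpl'(X,\sigma,\tau)\le k-2$. Because the class of trivial solutions is closed under retraction---the retract of a trivial solution of size $>1$ is a one-element solution, which is again trivial---this forces $\mathrm{Ret}^{k-2}(X,\sigma,\tau)$ to be trivial. A trivial solution is $1$-reductive, since $\Omega_1(\gamma,x,x)=\gamma_x(x)=x=\Omega_0(x)$ whenever $\sigma_x=\tau_x=\id$. Now I would climb back up the tower of retracts: every iterated retract $\mathrm{Ret}^j(X,\sigma,\tau)$ is again non-degenerate (by the construction of $\mathrm{Ret}$), so Lemma~\ref{lm:retred}, which states that $Y$ is $j$-reductive as soon as $\mathrm{Ret}(Y)$ is $(j-1)$-reductive, applies successively to $Y=\mathrm{Ret}^{k-3}(X,\sigma,\tau),\mathrm{Ret}^{k-4}(X,\sigma,\tau),\ldots,\mathrm{Ret}^0(X,\sigma,\tau)$, yielding that $\mathrm{Ret}^{\,k-2-j}(X,\sigma,\tau)$ is $(j+1)$-reductive for all $0\le j\le k-2$. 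Taking $j=k-2$ shows that $X=\mathrm{Ret}^0(X,\sigma,\tau)$ is $(k-1)$-reductive, contradicting the assumption that $(X,\sigma,\tau)$ is not $(k-1)$-reductive. Therefore $mpl'(X,\sigma,\tau)\ge k-1$, and combining the two bounds gives $mpl'(X,\sigma,\tau)=k-1$. (When $k=1$ the lower bound is vacuous and the value $mpl'=0$ comes straight from the upper bound.)

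I do not expect a genuine obstacle: the argument is bookkeeping on top of Lemma~\ref{lm:retred} and Corollary~\ref{cor:redprim}. The only points that deserve a sentence of care are the base case---that a trivial solution, possibly of size $>1$, is $1$-reductive---and the equivalence between $mpl'(X,\sigma,\tau)\le k-2$ and $\mathrm{Ret}^{k-2}(X,\sigma,\tau)$ being trivial, which is precisely the closure of the class of trivial solutions under retraction; both are immediate from the definitions, so the final write-up should be short.
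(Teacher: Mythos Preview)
Your argument is correct and follows the same route the paper intends: the upper bound comes from Corollary~\ref{cor:redprim}, and the lower bound from iterating the ``if'' direction of Lemma~\ref{lm:retred} after noting that a trivial solution is $1$-reductive (the paper leaves all of this implicit, simply citing Corollary~\ref{cor:redprim} and the preceding discussion). Your extra care about closure of trivial solutions under retraction is a fair point to spell out, but there is no substantive difference in approach.
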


To finish our paper with the diagonal mappings again, we shall observe that,
by Proposition \ref{lm:bijections}, in the case of left non-degenerate $k$-reductive solution we immediately obtain, using~\eqref{eq:kred}, a shorter form of the mappings $U^{-1}$. 
\begin{corollary}
	Let $(X,\sigma,\tau)$ be a left non-degenerate $k$-reductive solution. Then   
		$U^{-1}(x)= \Omega_{k-1}(\sigma,\ldots,\sigma,x,\ldots,x)$. 
\end{corollary}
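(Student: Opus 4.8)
The plan is to combine Proposition~\ref{lm:bijections} with the defining identity \eqref{eq:kred} of $k$-reductivity, and nothing more is needed. First I would recall the observation made right after Definition~\ref{de:kred} that every $k$-reductive solution is $k$-permutational. Hence, since $(X,\sigma,\tau)$ is left non-degenerate and $k$-permutational, Proposition~\ref{lm:bijections} applies and gives
\[
U^{-1}(x)=\Omega_k(\sigma,\ldots,\sigma,x,\ldots,x)\qquad\text{for all }x\in X,
\]
where the tower has $k$ copies of $\sigma$ and $k+1$ point-arguments, all equal to $x$ (one base point plus $k$ substituted points).

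Next I would apply \eqref{eq:kred} to the term $\Omega_k(\sigma,\ldots,\sigma,x,x,\ldots,x)$, specializing $\gamma^{(1)}=\cdots=\gamma^{(k)}=\sigma$ and $z_1=\cdots=z_k=x$. This immediately yields
\[
\Omega_k(\sigma,\ldots,\sigma,x,x,\ldots,x)=\Omega_{k-1}(\sigma,\ldots,\sigma,x,\ldots,x),
\]
and substituting into the displayed formula for $U^{-1}$ finishes the argument.

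There is essentially no obstacle; the only thing to be careful about is the bookkeeping of how many $\sigma$'s and how many $x$'s appear in each tower, which is routine once one reads \eqref{eq:kred} with the constant choices. An alternative, self-contained route would be to repeat the proof of Proposition~\ref{lm:bijections} verbatim but invoking \eqref{eq:kred} wherever \eqref{eq:kper} was used; however, directly quoting Proposition~\ref{lm:bijections} and then collapsing one layer of the tower via \eqref{eq:kred} is the shortest path, and that is the one I would take.
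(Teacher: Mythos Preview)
Your proposal is correct and follows exactly the route the paper indicates: invoke Proposition~\ref{lm:bijections} (using that $k$-reductive implies $k$-permutational) to get $U^{-1}(x)=\Omega_k(\sigma,\ldots,\sigma,x,\ldots,x)$, then apply~\eqref{eq:kred} with all $\gamma^{(i)}=\sigma$ and $z_i=x$ to collapse one layer. There is nothing to add or correct.
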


\end{document}